\theoremstyle{plain}
\newtheorem{theorem}{Theorem}[section]
\newtheorem{prop}[theorem]{Proposition}
\newtheorem{lemma}[theorem]{Lemma}
\newtheorem{corollary}[theorem]{Corollary}
\newtheorem*{mainthm}{Main Theorem}
\theoremstyle{definition}
\newtheorem{remark}[theorem]{Remark}
\newcommand{\fat}[1]{\mathds{#1}}
\newcommand{\hsm}[1]{\mathcal{#1}}
\newcommand{\Z}{\fat{Z}}
\newcommand{\ZZ}{\Z}
\newcommand{\R}{\fat{R}}
\newcommand{\RR}{\R}
\newcommand{\NN}{\fat{N}}
\newcommand{\NNhat}{\widehat{\NN}}
\newcommand{\minus}{\setminus}
\newcommand{\link}[2]{\mathrm{Lk}\!\left(#1,#2\right)}
\newcommand{\uplink}[2]{\mathrm{Lk}_{\uparrow}(#1,#2)}
\newcommand{\dnlink}[2]{\mathrm{Lk}_{\downarrow}(#1,#2)}
\newcommand{\st}[2]{\mathrm{St}\!\left(#1,#2\right)}
\newcommand{\inv}{^{^{-1}}}
\newcommand{\term}{\partial_{\!_{\mathbf{+}}}\!}
\newcommand{\init}{\partial_{\!_{\mathbf{-}}}\!}
\newcommand{\Cay}[1]{\mathit{Cay}\!\left(#1\right)}
\newcommand{\PP}[1]{\fat{P}_{#1}}
\newcommand{\tilPP}[1]{\widetilde{\fat{P}}_{#1}}
\newcommand{\area}[1]{\mathrm{Area}\!\left(#1\right)}
\newcommand{\res}[1]{\left|_{#1}\right.}
\newcommand{\pres}[2]{\left\langle #1\,\Big{|}\,#2\right\rangle}
\newcommand{\vrim}[1]{V\mathrm{Rim}\!\left(#1\right)}
\newcommand{\erim}[1]{E\mathrm{Rim}\!\left(#1\right)}
\newcommand{\cell}[1]{C_{#1}}
\newcommand{\alt}[3]{\mathrm{Alt}_{#1}\left(#2,#3\right)}
\newcommand{\tri}[2]{\Delta_{#1}\left(#2\right)}
\newcommand{\zl}{\widetilde{h}\inv(0)}
\newcommand{\bd}{\partial}
\newcommand{\ep}{\hfill$\square$}
\newcommand{\join}{\star}
\begin{document}
\title[Subgroups of perturbed RAAGs]{Dehn functions and finiteness properties of subgroups of perturbed right-angled Artin groups}

\author[N.~Brady]{Noel Brady$^1$}
\address{Dept.\ of Mathematics\\
        University of Oklahoma\\
	Norman, OK 73019}
\email{nbrady@math.ou.edu}

\author[D.\ P.~Guralnik]{Dan P.\ Guralnik}
\address{Dept.\ of Mathematics\\
        University of Oklahoma\\
	Norman, OK 73019}
\email{dan.guralnik@ou.edu}

\author[SR.~Lee]{Sang Rae Lee}
\address{Dept.\ of Mathematics\\
        University of Oklahoma\\
	Norman, OK 73019}
\email{srlee@math.ou.edu}

\date{\today}

\begin{abstract}
We introduce the class of perturbed right-angled Artin groups. These are constructed  by gluing Bieri double groups into
standard right-angled Artin groups.
As a first application of this construction, we obtain families of CAT(0) groups which contain finitely presented subgroups
which are not of type FP$_3$ and have exponential, or polynomial Dehn functions of prescribed degree.
	
\end{abstract}

\maketitle
\footnotetext[1]{N.\ Brady was partially  supported by NSF grant no.\  DMS-0906962}

Right-angled Artin groups (RAAGs) have been the subject of intense study over the past two decades: 
they have non-positively curved cubical $K(\pi, 1)$ spaces \cite{CharneyDavis};  
they embed (as finite index subgroups) in right-angled Coxeter groups \cite{DavisJan}; 
they have subgroups with interesting finiteness properties \cite{[Bestvina-Brady], Leary-Nucinkis, Hsu-Leary}; 
they admit embeddings of special cube complex groups \cite{HWise};  there is an ongoing investigation 
of their (co)homology groups and asymptotic topology \cite{DavisLeary, JenM, BrMe, LyM, DO}; various filling invariants for these groups and special subgroups are being determined \cite{DERY, [Dyson], [ABDDY], BeCh};  
a start has been made on their quasi-isometric classification \cite{Bestvina-Kleiner-Sageev, BJNeumann}, and 
on understanding their  automorphism groups \cite{BCV, CharVogt, CCV}.

In this paper we introduce the class of {\em perturbed RAAGs} and  investigate the geometry and topology of their kernel subgroups.
First we recall  what is known about kernels of  RAAGs; see section~\ref{back} for more details.

Each finite graph $\Gamma$ determines a finite presentation of a RAAG $A_\Gamma$.
The vertices of $\Gamma$ are in 1-to-1 correspondence with generators of $A_\Gamma$, and
two generators commute precisely when the corresponding vertices are adjacent in $\Gamma$.
The group $A_\Gamma$ is the fundamental group of a finite, non-positively curved, piecewise euclidean, cubical
complex $X_\Gamma$.
There is an epimorphism $h: A_\Gamma \to \Z$, obtained by sending all generators of $A_\Gamma$ to
a generator   of $\Z$.  Depending on $\Gamma$, the kernel, $\ker(h)$, may have very interesting geometry and
topology. We summarize the known results.

\begin{enumerate}
\item  {\em Topology of $\ker(h)$.} The finiteness properties of $\ker(h)$ are determined by the topology of
the flag complex, $K_\Gamma$,  determined by $\Gamma$.  There is a real-valued, $h$--equivariant Morse function on
the universal cover of the cubical complex $X_\Gamma$. The kernel subgroup $\ker(h)$ acts properly and cocompactly
on the level sets of this Morse function, so the finiteness properties are determined by the topology of the level
sets. This topology is determined by the topology of the {\em ascending and descending links} of the
Morse function. In this case, the ascending and descending links are all isomorphic to the flag complex $K_\Gamma$.
For example, if $K_\Gamma$ is $1$--connected, then $\ker(h)$ is finitely presented. 

 \item {\em Geometry of $\ker(h)$.} In the case that $K_\Gamma$ is $1$--connected, we can ask about the
 {\em Dehn function} of the finitely presented kernel subgroup $\ker(h)$. Will Dison \cite{[Dyson]} proved that the
 Dehn function of $\ker(h)$ is bounded above by the polynomial function $x^4$.
 \end{enumerate}

A key ingredient in the construction of perturbed RAAGs is the Bieri doubling procedure.
In  \cite{[Bieri]}, Bieri showed that Stallings' example \cite{[Stallings]} of a finitely presented group
 with non-finitely generated third homology is a subgroup of the direct product of three copies of the free group $F_2$. 
 This is a RAAG whose defining flag complex $K_\Gamma$ is the $3$--fold join of $0$--spheres. 
 In \cite{BBMS} Bieri's construction was used to produce finitely presented subgroups of (bi)-automatic
 groups with exponential Dehn functions or with polynomial Dehn functions of prescribed degree. 
  In \cite{[Brady-Blue-Book]}  explicit Bieri double groups are constructed as kernels of the fundamental groups of
 non-positively curved, piecewise-euclidean cubical complexes. The basic objects to construct are non-positively curved
 squared complexes whose fundamental groups are $F_n \rtimes \Z$ with polynomially or exponentially distorted fibers $F_n$.
 See Section 2 for more details. The topology of the Bieri doubles is well understood. They are doubles of $F_n \rtimes \Z$ over 
 the $F_n$ fiber, and so have finite,  $2$--dimensional $K(\pi,1)$ spaces.

 Here is an overview of the definition of a perturbed RAAG.  See Section~3 for details. 
 Start with a finite graph $\Gamma$, and consider the corresponding RAAG $A_\Gamma$.
 An edge of $\Gamma$ corresponds to a $\Z^2$ subgroup of $A_\Gamma$, which is the fundamental group of a square torus
in the cubical complex $X_\Gamma$. In order to define a {\em perturbed RAAG}, one selects certain edges of $\Gamma$, and replaces
their corresponding $2$--tori by suitably chosen non-positively curved squared complexes with fundamental groups $F_n\rtimes \Z$.  
The latter are chosen very 
carefully to ensure  that the  resulting 
perturbed RAAG is non-poitively curved cubical, and admits an epimorphism $h$  to $\Z$ whose kernel
$\ker(h)$ has the same finiteness properties as the original RAAG. However, the geometry of $\ker(h)$ will typically be
dominated by the geometry of the Bieri double.  The following table summarizes the situation.

  \bigskip

 \begin{center}
{\tiny 
\begin{tabular}{|c||c|c|c|}
 \hline
 \rule{0in}{2.5ex}{\bf CAT(0) }&  {\bf Asc/Desc}  &  {\bf Finiteness}  &  {\bf Dehn function} \\
     {\bf Group}  &  {\bf Links  }   			      &   {\bf properties of $\ker(h)$}                       &  {\bf of $\ker(h)$ } \\ [0.5ex]
\hline
\hline
 \rule{0in}{2.5ex}{\bf RAAG} &  Isomorphic     &   Determined by  the   &  Has upper bound of $x^4$. \\
                     & to $K_\Gamma$.  & topology of $K_\Gamma$.  &   Not known in general.\\ [0.5ex]
 \hline
 \rule{0in}{2.5ex}{\bf Bieri}    & Contractible.   & Has a finite  & $x^n$ or $e^x$.   \\
 {\bf double}  &                       &  $2$--dimensional  $K(G,1)$.  & \\ [0.5ex]
 \hline
 \rule{0in}{2.5ex}
 {\bf Perturbed} &   Homotopy  equivalent &  Determined by the  &   Prescribed poly.\ or exp.\ upper bounds.  \\
 {\bf RAAG}     &   to $K_\Gamma$, but typically   & topology of $K_\Gamma$.  & $\exists$ $x^n$ and $e^x$ examples.  \\
                         &       not homeomorphic. &   &   Not known in general.\\ [0.5ex]
\hline
 \end{tabular}
 }
\end{center}

\bigskip

Many of the key constructions in this paper evolved from the circle of ideas started by Stallings \cite{[Stallings]} and Bieri \cite{[Bieri]}. So it is 
only fitting that  our main result, Theorem~\ref{Dehn function of the double}, shows that there exist analogues of Stallings' 
group whose Dehn functions are polynomial (of any degree at least 4) or exponential. 

\begin{mainthm}
 For each $d \in \NN \cup \{\infty\}$ there is a perturbed RAAG whose kernel subgroup $K=K(d)$ is of type $F_2$ but not $F_3$, and has 
  Dehn function given by: \[\delta_K(n)\asymp\left\{\begin{array}{rl} n^{d+3}, & d<\infty\\ e^n\,, & d=\infty\,.\end{array}\right.\]
\end{mainthm}

This paper is organized as follows. In Section~1 we recall some background facts about RAAGs.  In Section~2 we study the free-by-cyclic 
groups which will be used in the definition and construction of the perturbed RAAGs. The perturbed RAAGs are introduced in Section~3. They 
are shown to be   CAT(0) cubical groups and the topology of their kernel subgroups is investigated. Sections~4 and 5 are concerned with the 
Dehn functions of the kernels of perturbed RAAGs. An analogue of the technique of {\em pushing fillings}  \cite{[ABDDY]} is developed in 
Section~4, and is used to provide upper bounds for the Dehn function of the kernel of a general perturbed RAAG. In Section~5, we introduce 
a family of perturbed RAAGs whose kernels have lower bound estimates on their Dehn functions which agree with the upper bounds. These 
groups are perturbed versions of the RAAGs introduced in Section~2.5.2 of \cite{[Brady-Blue-Book]} and generalized in Section~5 of \cite{[ABDDY]}. 
Section~6 contains the statement and proof of the Main Theorem.

\section{Preliminaries on RAAGs} \label{back}

Given a graph $\Gamma=(V\Gamma,E\Gamma)$, recall that the right-angled Artin group $A_\Gamma$ has the presentation \[A_\Gamma\simeq\pres{V\Gamma}{[u,v]=1,\;\{u,v\}\in E\Gamma}\]

A spherical flag complex $K_\Gamma$ with $K_\Gamma^{(1)}=\Gamma$ may be constructed as follows: in the Hilbert space $H_\Gamma=\ell_2(V\Gamma)$, let a subset $S$ of $V\Gamma$ span a simplex of $K_\Gamma$ on the unit sphere iff $S$ is a clique in $\Gamma$.\\

For every $S\subset V\Gamma$, let $C(S)$ denote the (unit) cube spanned by the vectors of $S$, and let $C_\Gamma$ denote the union of all $C(S)$ such that $S$ is a clique in $\Gamma$. We may then proceed to construct a non-positively curved cubical complex $X_\Gamma$ by setting $X_\Gamma$ to be the quotient of the set $C_\Gamma$ by the action of the translation group generated by the maps $x\mapsto x+v$, $v\in V\Gamma$.\\

Recall the spherical complex construction (see section 2.1.3 in \cite{[Brady-Blue-Book]}). If $K$ is a simplicial complex, then $S(K)$ is the simplicial complex with vertex set $\{v^\pm\}_{v\in K^{(0)}}$, such that for every simplex $\sigma\in K$ and any partition $\sigma^{(0)}=A\sqcup B$, $S(K)$ will include the simplex with vertices $A^+\sqcup B^-$. 
Thus for every $P\subseteq K^{(0)}$ one has a simplicial embedding $\iota_P:K\to S(K)$ defined on $K^{(0)}$ by $\iota_P(v)=v^+$ for $v\in P$ and $\iota_P(v)=v^-$ otherwise. Clearly there is also a surjective simplicial map $\pi:S(K)\to K$ given by $\pi(v^\pm)=v$ satisfying $\pi\circ\iota_P=\mathrm{id}_K$, while the map $fold_{P}=\iota_P\circ\pi:S(K)\to S(K)$ is a retraction of $S(K)$ onto the embedded copy $K_P=\iota_P(K)$ of $K$ in $S(K)$.

The following proposition summarizes some well-known properties of the sphere construction we will be using implicitly throughout this work:
\begin{prop}[Properties of the sphere construction]\label{props of sphere construction} Let $K$ and $L$ be simplicial complexes. Then:
\begin{enumerate}
	\item If $L$ is a full sub-complex of $K$ then $S(L)$ is a full sub-complex of $S(K)$.
	\item If $K$ is a flag complex, then $S(K)$ is a flag complex.
	\item $S(L\join K)=S(L)\join S(K)$.
	\item For any simplicial map $\lambda:L\to K$ there is a unique simplicial $S(\lambda):S(L)\to S(K)$ satisfying $\lambda\circ fold_{\lambda\inv P}=fold_{P}\circ S(\lambda)$ for all $P\subset K^{(0)}$.
\end{enumerate}
\end{prop}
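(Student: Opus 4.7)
My plan is to read all four properties off the explicit combinatorial description: a subset $\tau^{(0)}\subseteq S(K)^{(0)}$ is the vertex set of a simplex of $S(K)$ if and only if there is a simplex $\sigma\in K$ and a partition $\sigma^{(0)}=A\sqcup B$ with $\tau^{(0)}=A^+\sqcup B^-$. A consequence used repeatedly is that two vertices $v^\epsilon$, $w^\delta$ are joined by an edge of $S(K)$ precisely when $v\neq w$ and $\{v,w\}\in E(K)$; in particular $v^+$ and $v^-$ are never adjacent.

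For (1), if $\tau$ is a simplex of $S(K)$ with vertex set in $S(L)$, then writing $\tau^{(0)}=A^+\sqcup B^-$ from a partition of $\sigma^{(0)}$ for some $\sigma\in K$ yields $\sigma^{(0)}=A\cup B\subseteq L^{(0)}$; fullness of $L$ in $K$ forces $\sigma\in L$, hence $\tau\in S(L)$. For (2), any set $\{v_i^{\epsilon_i}\}_{i=1}^k$ of pairwise adjacent vertices of $S(K)$ has distinct underlying vertices $v_i$ that are pairwise adjacent in $K$; since $K$ is flag, $\{v_1,\ldots,v_k\}$ is the vertex set of a simplex $\sigma\in K$, and partitioning $\sigma^{(0)}$ by the signs $\epsilon_i$ exhibits $\{v_i^{\epsilon_i}\}$ as a simplex of $S(K)$. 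For (3), given a partition $A\sqcup B$ of a simplex $\tau\star\sigma\in L\star K$, split it into its $L$-part and $K$-part; each side partitions a simplex on its own side, which writes $A^+\sqcup B^-\in S(L\star K)$ as a join of a simplex of $S(L)$ with one of $S(K)$. The reverse inclusion is the same assembly in reverse.

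For (4), I would define $S(\lambda)$ on vertices by $v^\pm\mapsto\lambda(v)^\pm$. The naturality under $fold$, quantified over all $P\subseteq K^{(0)}$, forces this assignment on vertices and so yields uniqueness. To check simpliciality, a simplex $A^+\sqcup B^-$ of $S(L)$ is mapped to $\lambda(A)^+\sqcup\lambda(B)^-$, which arises from the partition $\lambda(A)\sqcup\lambda(B)$ of $\lambda(\sigma)^{(0)}$ in $K$. The main obstacle is this last step when $\lambda$ fails to be injective on $\sigma$: if $a\in A$ and $b\in B$ satisfy $\lambda(a)=\lambda(b)=w$, the image would contain the forbidden pair $\{w^+,w^-\}$. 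The standing convention that resolves this is to restrict to non-degenerate simplicial $\lambda$, which is the case in all applications of $S(\lambda)$ in the paper.
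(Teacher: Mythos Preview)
The paper does not give a proof of this proposition; it is introduced as a summary of ``well-known properties of the sphere construction'' and used without argument. Your verification of parts (1)--(3) is correct and is the standard combinatorial check: read off the simplex structure of $S(K)$ directly from partitions $A\sqcup B$ of simplices $\sigma\in K$, and use that $v^+$ is never adjacent to $v^-$.

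For part (4), your definition $S(\lambda)(v^\pm)=\lambda(v)^\pm$ is the intended one, and you correctly isolate the genuine obstruction: if $\lambda$ collapses an edge $\{a,b\}$ of $L$ to a single vertex $w$, then the edge $\{a^+,b^-\}\in S(L)$ is sent to the non-simplex $\{w^+,w^-\}$, so $S(\lambda)$ fails to be simplicial. Your resolution---restricting to non-degenerate $\lambda$---is the right one, and is consistent with how the construction is actually used in the paper (the maps $\varphi_f$ are embeddings). One small caveat: the compatibility equation $\lambda\circ fold_{\lambda^{-1}P}=fold_P\circ S(\lambda)$ as written in the statement has a type mismatch (the left-hand side lands in $K$, the right in $S(K)$); the intended relation is presumably $\iota_P\circ\lambda=S(\lambda)\circ\iota_{\lambda^{-1}P}$, which does force $S(\lambda)$ on vertices as you claim. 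You may want to state the corrected identity explicitly when invoking uniqueness.
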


A well-known and immediate application of the proposition (part 2) is to the geometry of the complex $X_\Gamma$ described above (see, e.g. section 2.5.1 in \cite{[Brady-Blue-Book]}): $X_\Gamma$ has a single vertex -- denoted by $\ast$, -- whose link is the spherical complex $S(K_\Gamma)$ constructed over $K_\Gamma$. $X_\Gamma$ has the structure of a non-positively curved piecewise-Euclidean cube complex, because $S(K_\Gamma)$ is flag. Thus, $X_\Gamma$ is a $K(A_\Gamma,1)$ since its universal cover is contractible.

\section{The perturbing groups}\label{section:the groups S_d}
\subsection{Preliminary: LOG presentations and their Morse functions}\label{preliminary:LOG presentations}
Recall that a LOG presentation consists of a directed graph $\Psi$ and a labeling $\lambda:E\Psi\to V\Psi$. We use the symbols $\init e$ and $\term e$ to denote the initial and terminal vertices of an edge $e\in E\Psi$.

\begin{figure}[ht]
    \includegraphics[width=.5\textwidth]{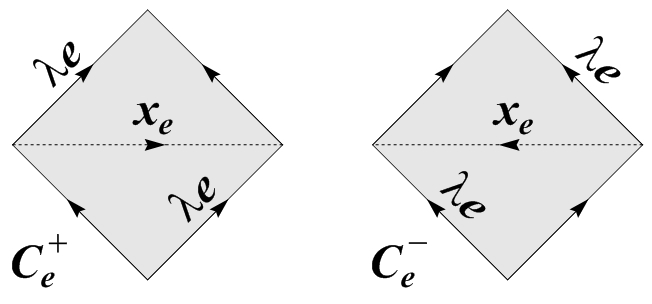}
    \caption{\tiny Two orientations of relator squares: $\cell{e}^+$ (left) and $\cell{e}^-$ (right). Note how the Morse function $h$ sorts the edges of $\cell{e}^\pm$ into two pairs: the upper edges and the lower edges.\normalsize\label{orientations of squares}}
\end{figure}

From a LOG presentation $(\Psi,\lambda)$ we construct a group $G(\Psi,\lambda)$ with generating set $V\Psi$ and relations of the form \[r_e=(\lambda e)(\term e)(\lambda e)\inv(\init e)\inv\,\]
-- see figure \ref{orientations of squares}.\\

The relations $r_e$ imply there is a unique epimorphism $h:G(\Psi,\lambda)\to\ZZ$ defined by $h(V\Psi)=\{1\}$. Moreover, let $\Cay{\Psi,\lambda}$ denote the Cayley $2$-complex of the corresponding group presentation. Then $h$ extends linearly to a real-valued Morse function of $\Cay{\Psi,\lambda}$. The corresponding presentation $2$-complex will be denoted by $P(\Psi,\lambda)$, its only vertex denoted henceforth by $(\ast)$. The function $h$ is then the lift of a circle-valued Morse function $\bar{h}:P(\Psi,\lambda)\to\fat{S}^1$.\\

We mark the homotopy classes of certain loops in $P(\Psi,\lambda)$: For each edge $e\in E\Psi$, let $\cell{e}$ denote the unoriented closed $2$-cell (a Euclidean unit square) corresponding to the relation $r_e$, and let $x_e\in G(\Psi,\lambda)$ correspond to the homotopy class of the loop arising as the intersection of $\bar{h}\inv(1\in\fat{S}^1)$ with the image of $\cell{e}$ in $P(\Psi,\lambda)$, oriented so that $\init e\cdot x_e=\lambda e$ holds in $G(\Psi,\lambda)$ (and consequently $x_e\term e=\lambda e$). We subdivide $\cell{e}$ into a pair of triangles by appropriately marking and orienting the diagonal corresponding to $x_e$, and it is possible to consider two orientations of $\cell{e}$: $\cell{e}^+$ will denote $\cell{e}$ with the orientation whose boundary reads $(\lambda e)(\term e)(\lambda e)\inv(\init e)\inv$, and $\cell{e}^-$ will denote $\cell{e}$ taken with the reverse orientation. The {\it rim of $\cell{e}^+$} is defined to be the word $\vrim{\cell{e}^+}=(\init e)\inv(\lambda e)$, while $\vrim{\cell{e}^-}=(\lambda e)\inv(\init e)$ (see fig. \ref{orientations of squares}). Since in $G(\Psi,\lambda)$ one has $\vrim{\cell{e}^{\pm}}=x_e^{\pm 1}\in\ker(h)$, we will use $\erim{\cell{e}^\pm}$ to denote the word $x_e^{\pm 1}$.\\

The complex $P(\Psi,\lambda)$ is completely determined by $\link{\ast}{P(\Psi,\lambda)}$, and recall $\Cay{\Psi,\lambda}$ is CAT(0) if and only if $\link{\ast}{P(\Psi,\lambda)}$ has girth at least $4$. Also, $\Cay{\Psi,\lambda}$ is Gromov-hyperbolic whenever the girth of $\link{\ast}{P(\Psi,\lambda)}$ is at least 5: the group $G(\Psi,\lambda)$ acts geometrically on the CAT(0) space $\Cay{\Psi,\lambda}$ while the latter contains no $2$-flats.\\

Let $V^+$ (resp. $V^-$) denote the set of symbols $v^+$ (resp. $v^-$) with $v\in V\Psi$. Let $V^\pm=V^+\cup V^-$. Then $\link{\ast}{P(\Psi,\lambda)}$ has the following description: it is the graph with vertex set $V^{\pm}$ and two vertices $u^{\epsilon_1},\,v^{\epsilon_2},\,\epsilon_{1,2}\in\{+,-\}$ are joined by an edge if and only if there is and edge $e\in E\Psi$ such that a pair of adjacent edges $f,g$ of $\cell{e}$ are labeled $u$ and $v$ respectively, and $\partial_{\epsilon_1}f=\partial_{\epsilon_2}g$. Recall that (with respect to the Morse function $h$) the subgraph $\dnlink{\ast}{P(\Psi,\lambda)}$ spanned by $V^+$ is called {\it the descending link} of $\ast$, while the full subgraph $\uplink{\ast}{P(\Psi,\lambda)}$ induced by $V^-$ is the {\it ascending link} of $\ast$.\\

\subsection{Non-positively curved LOG presentations with nice links}
In this paper we only deal with situations where $\link{\ast}{P(\Psi,\lambda)}$ has girth at least $4$, so that $\link{\ast}{P(\Psi,\lambda)}$ is always a simple graph. Moreover, an important property of the presentations we are going to deal with is that both $\dnlink{\ast}{P(\Psi,\lambda)}$ and $\uplink{\ast}{P(\Psi,\lambda)}$ are trees. We assume these properties of $\link{\ast}{P(\Psi,\lambda)}$ until the end of the section.\\

Morse theory shows $\ker(h)$ is a free group generated by the set $\{x_e\}_{e\in E\Psi}$. Thus, each level set of $h$ in $\Cay{\Psi,\lambda}$ is a tree, with each edge naturally labeled by some $x_e$.\\

By an {\it ascending word of height $n$} we shall mean a positive word $w=\sigma_1\cdots\sigma_n$ in the generating set $V\Psi$. Observe that $h(\sigma_1\cdots\sigma_k)=k$ for all $0\leq k\leq n$. Since $\link{\ast}{P(\Psi,\lambda)}$ has girth at least $4$, every $C_e$ ($e\in E\Psi)$ is embedded in $\Cay{\Psi,\lambda}$ isometrically (due to non-positive curvature). Together these imply that any lift of an ascending word to $\Cay{\Psi,\lambda}$ is a geodesic in the standard piecewise-Euclidean metric on $\Cay{\Psi,\lambda}$.\\

Given a pair $(u,v)$ of ascending words, consider the element $x=uv\inv\in\ker(h)$. Since $x$ has a unique representative as a word $w$ in the alphabet $\{x_e\}_{e\in E\Psi}$, one expects to be able to produce a standardized embedded (in $\Cay{\Psi,\lambda}$) filling of the word $uv\inv w\inv$.\\


Let $(a^+,b^+)$ be an oriented edge in $\dnlink{\ast}{P(\Psi,\lambda)}$. Then there is a unique edge $e\in E\Psi$ such that either $\lambda e=a$ and $\term e=b$ or $\lambda e=b$ and $\term e=a$. Let $\cell{(a,b)}$ denote the cell $\cell{e}$ taken with a matching orientation: choose the orientation on $\cell{e}$ so that $\vrim{\cell{e}}ba\inv$ is the word read along the boundary of $\cell{e}$.\\ 

\begin{figure}[ht]
    \includegraphics[width=.6\textwidth]{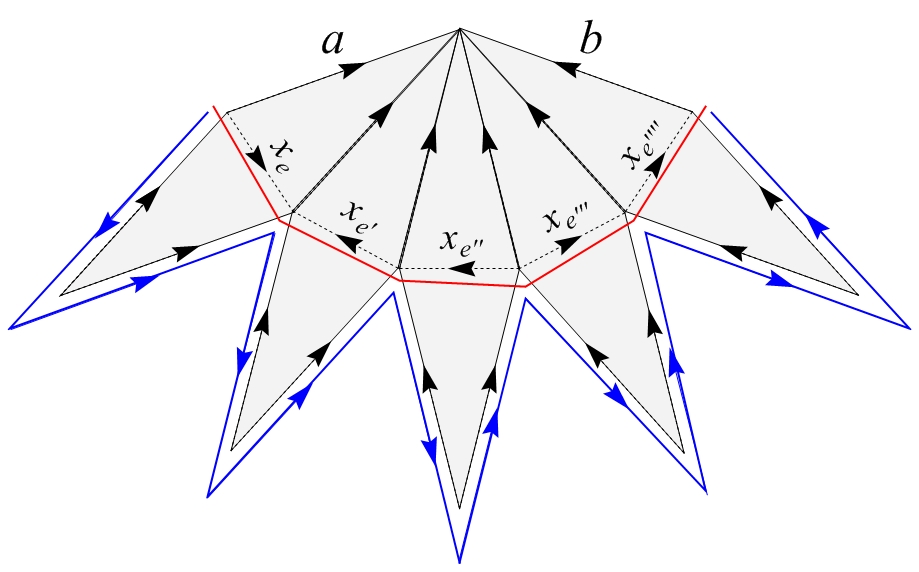}
    \caption{\tiny A typical fan $\mathrm{Fan}(a,b)$ connecting a pair of ascending edges. The vertex rim $\vrim{a,b}$ is marked in blue; the edge rim $\erim{a,b}$ is marked in red.\normalsize\label{fig:simple fan}}
\end{figure}

More generally, let $(a^+=v^+_0,\ldots,v^+_k=b^+)$ be a simple vertex path (that is, no vertex is repeated) in $\dnlink{\ast}{P(\Psi,\lambda)}$. Since $\dnlink{\ast}{P(\Psi,\lambda)}$ is a tree, this path is completely determined by the choice of the letters $a,b\in V\Psi$. Let $\mathrm{Fan}(a,b)$ denote the quotient of the disjoint union of the closed cells $\{C_i=\cell{(v_{i-1},v_i)}\}_{i=1}^k$ under identifying the upper edges of $\cell{(v_{i-1},v_i)}$ and $\cell{(v_i,v_{i+1})}$ which are marked with $v_i$, for all $i=1,\ldots,k$ (This is well-defined, because $\link{\ast}{P(\Psi,\lambda)}$ contains no edge-loops). We define the {\it vertex-} and {\it edge-rims} of the fan $\mathrm{Fan}(a,b)$ as the products of the respective rims of its constituent cells (see figure \ref{fig:simple fan}):
\begin{eqnarray*}
	\vrim{a,b}&=&\vrim{\cell{(v_0,v_1)}}\cdots\vrim{\cell{(v_{k-1},v_k)}}\,,\\
	\erim{a,b}&=&\erim{\cell{(v_0,v_1)}}\cdots\erim{\cell{(v_{k-1},v_k)}}\,.
\end{eqnarray*}
We now claim that for any $a,b,u\in V\Psi$ with $a\neq b$, the word $\vrim{a,b}$ does not contain $uu\inv$ as a sub-word. Suppose not so, and let $(a^+=v^+_0,\ldots,v^+_k=b^+)$ be a simple vertex path in $\dnlink{\ast}{P(\Psi,\lambda)}$. Then for some $0<i<k$ we have that $uv_i$ is a segment of the boundary of the cell $C_i$ while $v\inv_i u\inv$ is a segment of the boundary of $C_{i+1}$. Since $\link{\ast}{P(\Psi,\lambda)}$ contains no circuits of length $1$ or $2$, we conclude that $C_{i+1}$ must equal $C_i$ with an opposite orientation. This implies $v_{i-1}=v_{i+1}$ -- a contradiction. Thus, for any fan $\mathrm{Fan}(a,b)$, the only possible elementary cancellations in $\vrim{a,b}$ are of the form $u\inv u$, $u\in V\Psi$. By construction, such a substring only occurs on the rim of one of the cells $C_i$ (same notation as above). But this is impossible, as $\link{\ast}{P(\Psi,\lambda)}$ contains no circuits of length $1$. We conclude:

\begin{lemma} For any pair of distinct letters $a,b\in V\Psi$, the vertex rim of $\mathrm{Fan}(a,b)$ is a reduced word in the letters of $V\Psi$, and the edge-rim of $\mathrm{Fan}(a,b)$ is a reduced word in the letters $\{x_e\}_{e\in E\Psi}$.\hfill $\square$
\end{lemma}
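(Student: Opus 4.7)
The plan is to verify both reducedness statements by exhausting the possible types of elementary cancellation that could occur in a rim word, and showing that each type would violate either the girth hypothesis on $\link{\ast}{P(\Psi,\lambda)}$ or the simplicity of the chosen path in $\dnlink{\ast}{P(\Psi,\lambda)}$.

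First I would handle the vertex rim. Since $\vrim{a,b}$ is the concatenation of the length-two words $\vrim{C_i}$ as $i$ ranges from $1$ to $k$, any elementary cancellation in $\vrim{a,b}$ must fall into one of two types: (i) an internal cancellation within a single $\vrim{C_i}$, or (ii) a cancellation bridging $\vrim{C_i}$ and $\vrim{C_{i+1}}$ for some $1 \le i < k$. Type (i) would force the two rim-edges of a single cell $C_i$ to carry inverse letters, and this directly produces a loop of length $1$ in $\link{\ast}{P(\Psi,\lambda)}$, contradicting girth $\ge 4$. For type (ii), I would analyze the configuration around the shared upper edge labeled $v_i$: the occurrence of $uu\inv$ straddling the boundary between $C_i$ and $C_{i+1}$ means that the lower edges of these two cells adjacent to $v_i$ carry the same label $u$ but with opposite Morse signs. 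Because the link has no bigons (girth $\ge 4$), the pair of edges contributing $u$ and $v_i$ to the two cells must in fact be the same pair, forcing $C_{i+1}$ to be $C_i$ taken with the reverse orientation, which in turn gives $v_{i-1} = v_{i+1}$, contradicting simplicity of the path $(v_0^+, \ldots, v_k^+)$.

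Next I would treat the edge rim. Here each cell $C_i$ contributes exactly one letter $x_{e_i}^{\pm 1}$, so the only possible elementary cancellations are between consecutive letters $\erim{C_i}\erim{C_{i+1}}$. Such a cancellation requires $e_i = e_{i+1}$ as edges of $\Psi$ and the two copies $C_i,C_{i+1}$ to be taken with opposite orientations. As in the previous paragraph, this reduces to the case $C_{i+1} = C_i^{-1}$, again forcing $v_{i-1} = v_{i+1}$ and contradicting simplicity.

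The only real obstacle I anticipate is bookkeeping: making the case analysis of type (ii) cancellations for the vertex rim precise enough to pin down which edges of $\Psi$ realize each letter and how their signs interact with the Morse function. Once the correspondence between local subwords of a rim and oriented link-edges is set up cleanly, both conclusions reduce to the girth hypothesis combined with the simplicity of the vertex path.
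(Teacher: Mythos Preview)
Your proposal is correct and follows essentially the same argument as the paper: the paper also splits the vertex-rim analysis into the bridging case $uu\inv$ (ruled out because girth $\geq 4$ forces $C_{i+1}=C_i^{-1}$, hence $v_{i-1}=v_{i+1}$) and the internal case $u\inv u$ (ruled out by absence of length-$1$ circuits). The only difference is cosmetic ordering, and you spell out the edge-rim case explicitly whereas the paper leaves it implicit in the ``We conclude'' preceding the lemma.
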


We would now like to extend the construction of fans to fans `joining' a pair of ascending words of equal height (figure \ref{fig:links_in_fans} illustrates the general idea). Let $u,v$ be positive words of length $n$. A $(u,v)$-fan of height $n$ is defined by induction on $n$ as follows:
\begin{itemize}
	\item[-] For $n=1$, we have $u,v\in V\Psi$, in which case either $u\neq v$ and $\mathrm{Fan}(u,v)$ is exactly as we have just defined above, or $u=v$ and then $\mathrm{Fan}(u,v)$ is defined to consist of one directed edge labeled by the symbol $u$ (same as $v$).
	\item[-] For $n>1$, write $u=au'$ and $v=bv'$ with $a,b\in V\Psi$. A $(u,v)$-fan consists of a $(u',v')$-fan $F'$ with rim $a_1\inv b_1\cdots a_k\inv b_k$, and with additional $1$-fans attached along the rim: let $b_0=a$ and $a_{k+1}=b$; we attach the fans $F_i=\mathrm{Fan}(b_{i-1},a_i)$ for $i=1,\ldots,k+1$, in that order, by identifying top edges on each $1$-fan to the corresponding edges along the rim of $F'$. We define $F$ to be the result of these identifications. Note the possibility that $b_{i-1}=a_i$ for some $i$: this case implies a cycle of length $3$ or a cycle of length $1$ in $\link{\ast}{P(\Psi,\lambda)}$ -- which is impossible. Thus, the word obtained from concatenating the rims of the fans $F_i$ is reduced, and we set it to be the rim of $F$.
\end{itemize}
Observe that a $(u,v)$-fan $F$ is uniquely determined by the pair $(u,v)$, so henceforth denote $F=\mathrm{Fan}(u,v)$. Also, $\vrim{F}\in\ker(h)$ can be rewritten uniquely into a word of length $\frac{1}{2}|\vrim{F}|$ in the alphabet $\{x_e\}_{e\in E\Psi}$: we denote this word by $\erim{F}$, and note that it is a geodesic in the Cayley graph of $\ker(h)$ (with respect to the letters $\{x_e\}_{e\in E\Psi}$, this graph is a tree). Figure \ref{fig:links_in_fans} illustrates the observations made thus far.\\

\begin{figure}[ht]
    \includegraphics[width=.6\textwidth]{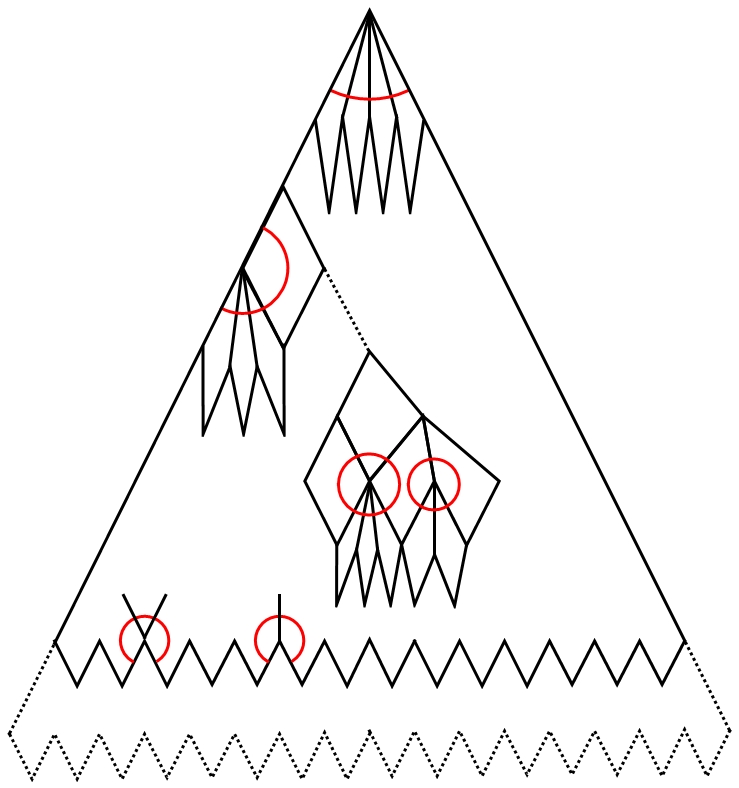}
    \caption{\tiny The different types of vertex links (red) in $\mathrm{Fan}(u^n,v^n)$.\normalsize\label{fig:links_in_fans}}
\end{figure}

We may treat $\mathrm{Fan}(u,v)$ as a disk diagram for $G(\Psi,\lambda)$. As a result, for any vertex $x\in\Cay{\Psi,\lambda}$ this defines a natural cellular map $\mathrm{Fan}(u,v)\to\Cay{\Psi,\lambda}$ sending the vertex of the fan to $x$. Observe that the level sets of $\Cay{\Psi,\lambda}$ are trees. Since edge-rims of fans are reduced, the intersection of $\mathrm{Fan}(u,v)$ with every level set is an arc (corresponding to the rim of a sub-fan of appropriate height). This implies that no two vertices of equal heights in $\mathrm{Fan}(u,v)$ get identified under this map. By construction of $\mathrm{Fan}(u,v)$, no two vertices of different heights get identified. We conclude:
\begin{prop}\label{fans are embedded} Suppose $\Psi$ is a graph with labeling $\lambda:E\Psi\to V\Psi$. If $\link{\ast}{P(\Psi,\lambda)}$ is triangle-free and $\uplink{\ast}{P(\Psi,\lambda)},\dnlink{\ast}{P(\Psi,\lambda)}$ are trees, then for any pair $(u,v)$ of ascending words in the letters of $V\Psi$, the fan $\mathrm{Fan}(u,v)$ embeds in $\Cay{\Psi,\lambda}$.
\end{prop}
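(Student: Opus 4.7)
The plan is to induct on the common height $n=|u|=|v|$, using two facts already established above: (i) each level set $h\inv(k)\subset\Cay{\Psi,\lambda}$ is a tree, since $\ker(h)$ is free on $\{x_e\}_{e\in E\Psi}$, and (ii) the edge-rim of any fan is a reduced word in these generators.

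First, I fix the natural cellular map $\phi\co\mathrm{Fan}(u,v)\to\Cay{\Psi,\lambda}$ obtained by choosing a lift of the basepoint and assembling the relator squares accordingly. Since the relator squares of $\mathrm{Fan}(u,v)$ are relator squares of $P(\Psi,\lambda)$ identified along edges in the presentation, this map is well-defined and preserves the Morse function $h$: a point of $\mathrm{Fan}(u,v)$ at height $k$ maps into $h\inv(k)$. Consequently, any two points identified by $\phi$ lie at the same height, and it suffices to check that $\phi$ is injective on each horizontal slice $\mathrm{Fan}(u,v)\cap h\inv(k)$.

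For $n=1$, either $u=v$ and the fan is a single edge, trivially embedded, or $u\neq v$ and the fan is the union of relator squares indexed by a simple vertex path in $\dnlink{\ast}{P(\Psi,\lambda)}$. By the preceding lemma, the vertex-rim is a reduced word in $V\Psi$ and the edge-rim is a reduced word in $\{x_e\}$, so by (i) the bottom slice embeds as an arc in the tree $h\inv(0)$, while the top of the $1$-fan is a path of edges whose labels are prescribed by the simple vertex path and so embeds by construction. For $n>1$, write $u=au'$, $v=bv'$ and set $F'=\mathrm{Fan}(u',v')$; by induction $F'$ embeds, so $\phi$ is injective on all horizontal slices of $\mathrm{Fan}(u,v)$ at heights $1\le k\le n$. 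The only new slice is at height $0$, namely the vertex-rim of $\mathrm{Fan}(u,v)$; since its edge-rim is reduced and $h\inv(0)$ is a tree, this slice also embeds.

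The main obstacle I expect is the bookkeeping at the inductive step: one must verify that the attached $1$-fans do not clash with one another or with $F'$ except along the prescribed gluings. This follows because the concatenated edge-rim across all attached $1$-fans is reduced in $\{x_e\}$ -- ruling out identifications at height $0$ -- while the gluing prescription identifies upper edges of adjacent $1$-fans with distinct edges along the rim of $F'$, so no extraneous identification is introduced at height $1$ beyond those already present in $F'$. Once this is tracked, the proof reduces to the reduced-word-in-a-tree principle supplied by (i) and (ii).
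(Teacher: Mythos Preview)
Your proposal is correct and follows essentially the same approach as the paper: both arguments observe that the natural map $\mathrm{Fan}(u,v)\to\Cay{\Psi,\lambda}$ preserves height, so it suffices to check injectivity on each horizontal slice, and then appeal to the facts that level sets are trees and edge-rims are reduced. The only difference is packaging: the paper notes in one line that the intersection of $\mathrm{Fan}(u,v)$ with each level set is the rim of a sub-fan of the appropriate height (hence a reduced word mapping into a tree), whereas you obtain the same conclusion by inducting on $n$, with the inductive hypothesis handling heights $1\le k\le n$ via $F'$ and the reduced edge-rim handling height $0$.
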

This embedding property of fans will allow us to construct diagrams of prescribed area in some modified right-angled Artin groups. The following trivial observation becomes meaningful in this context:
\begin{remark}\label{remark:exp upper bound} Suppose $\Psi$ is a graph with labeling $\lambda:E\Psi\to V\Psi$. If $\link{\ast}{P(\Psi,\lambda)}$ is triangle-free and $\uplink{\ast}{P(\Psi,\lambda)},\dnlink{\ast}{P(\Psi,\lambda)}$ are trees. Then there is a constant $C$ such that for any $n\in\NN$ and any pair $(u,v)$ of ascending words of length $n$ in the letters of $V\Psi$ one has
\[\left|\erim{\mathrm{Fan}(u,v)}\right|\leq C^n\,,\quad \area{\mathrm{Fan}(u,v)}\leq C^{n+1}\,.\]
Clearly, $C$ is the maximum length of a simple fan (fan of height one).
\end{remark}

\subsection{Polynomial distortion -- the group $S_d$}\label{description:poly}
We shall now study the group \[S_d=S(a_0,\ldots,a_d,s)=G(\Psi_d,\lambda_d)\] obtained from the LOG presentation $(\Psi_d,\lambda_d)$ given in figure \ref{figure:LOG_poly} for $d\geq 0$. One has:
\begin{displaymath}
	S_d=\left\langle
		a_0,\ldots,a_d,s \; \left|
				\left[s,a_0\right]=1\,,\;
				a_{i+1}a_ia\inv_{i+1}=a_0\,,\;i<d
		\right.\right\rangle
\end{displaymath}
We henceforth suppress any mention of $\lambda_d$, and denote $\PP{d}=P(\Psi_d)$ and $\tilPP{d}$ for $\Cay{\Psi_d}$.\\

\begin{figure}[ht]
    \includegraphics[width=\textwidth]{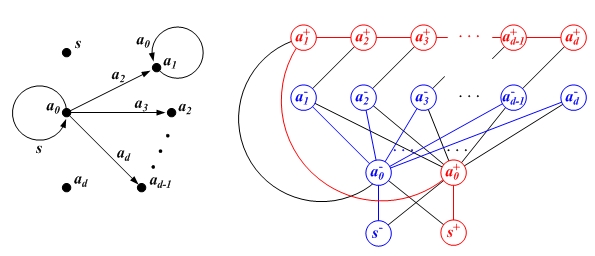}
    \caption{\tiny LOG presentation $(\Psi_d,\lambda_d)$ and vertex link $\link{\ast}{\PP{d}}$ for the group $S_d$ and $d\geq 1$.\normalsize\label{figure:LOG_poly}}
\end{figure}

It is clear that $\link{\ast}{\PP{d}}$ has girth $4$, making $\tilPP{d}$ into a $2$-dimensional CAT(0) cube complex. The ascending and descending links of the marked vertex are both trees, so that fans are well-defined and embedded in $\tilPP{d}$.\\

We want to study the family of fans $\mathrm{Fan}(u^n,v^n)$, where $n\in\NN$ and $u,v$ are distinct letters in $V\Psi_d$. We want to calculate the length of $\erim{\mathrm{Fan}(u^n,v^n)}$.\\

More specifically, consider fans of the form $F_{i,j}(n)=\mathrm{Fan}(a_i^n,a_j^n)$ and $F_{0,j}(n)=\mathrm{Fan}(a_0^n,a_j^n)$ for $1\leq i<j\leq d$. We denote \[f_{i,j}(n)=\frac{1}{2}\left|\vrim{F_{i,j}(n)}\right|\,,\]
and set $f_{i,i}(n)$ to equal zero for all $i$ and $n$ as well as $f_{i,j}(0)=0$ for all $i\leq j$. From the structure of $\dnlink{\ast}{\PP{d}}$ we have:
\[f_{i,j}(1)=j-i\,,\]
and then it is also possible to observe the following recursive relations:
\begin{lemma}\label{polynomial recursion} The following holds for $n\geq 1$ and $0\leq i<j\leq d$:
\[(\ast)\quad f_{i,j}(n)=f_{i,j-1}(n)+f_{0,j}(n-1)+1\,.\]
\end{lemma}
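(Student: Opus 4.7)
The plan is to induct on $n$, simultaneously proving the recursion $(\ast)$ and the following auxiliary structural claim: for $n\geq 1$ and $0\leq i<j\leq d$,
\[
\vrim{F_{i,j}(n)} \;=\; \vrim{F_{i,j-1}(n)}\cdot a_j^{-1}a_0\cdot\vrim{F_{0,j}(n-1)} \qquad \text{(Aux)}
\]
(with empty factors interpreted as the empty word), and moreover every ``even-position'' letter in every rim pair of these words is $a_0$. Taking half the length of both sides of (Aux) immediately yields $(\ast)$, so it suffices to prove (Aux).

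The base case $n=1$ is direct from the observation that $\dnlink{\ast}{\PP{d}}$ is the simple path with vertex sequence $s^+, a_0^+, \ldots, a_d^+$: the sub-path from $a_i^+$ to $a_j^+$ contains precisely $j-i$ edges, each contributing a square $\cell{(a_{m-1},a_m)}$ of rim $a_m^{-1}a_0$. Hence $\vrim{F_{i,j}(1)} = a_{i+1}^{-1}a_0\cdots a_j^{-1}a_0$, which factors as (Aux) asserts and whose ``even'' letters are all $a_0$.

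For the inductive step, I unfold the recursive definition of the fan: $F_{i,j}(n)$ is built from $F'=F_{i,j}(n-1)$ by attaching $k+1$ 1-fans along $F'$'s rim, where $k=f_{i,j}(n-1)$. Applying (Aux) inductively to $F'$, I partition its rim pairs into three consecutive blocks: those coming from $\vrim{F_{i,j-1}(n-1)}$, a single pair $(a_j,a_0)$, and those coming from $\vrim{F_{0,j}(n-2)}$. The attached 1-fans split accordingly into three consecutive groups whose concatenated rims will reproduce, respectively, the three factors on the right-hand side of (Aux) at level $n$. The ``straddling'' 1-fan bridging the first and second blocks is $\mathrm{Fan}(a_0,a_j)$, with exactly one cell more than the $\mathrm{Fan}(a_0,a_{j-1})$ appearing in the analogous position of $F_{i,j-1}(n)$'s construction; this extra cell supplies the required middle factor $a_j^{-1}a_0$.

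The main obstacle is this bookkeeping at the junctions. It hinges on the fact that the last rim pair of $\vrim{F_{i,j-1}(n-1)}$ has $a_0$ in its second position---a consequence of applying the inductive (Aux)---so that the straddling 1-fan indeed begins at $a_0^+$ in $\dnlink{\ast}{\PP{d}}$ and extends all the way to $a_j^+$, picking up exactly the promised extra cell. Once these identifications between attached 1-fans of $F_{i,j}(n)$, $F_{i,j-1}(n)$, and $F_{0,j}(n-1)$ are verified, both (Aux) and $(\ast)$ follow at step $n$, completing the induction.
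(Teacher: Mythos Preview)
Your proof is correct, and your auxiliary identity (Aux) is a sharper statement than the paper actually needs---you are tracking the full rim \emph{word}, not just its length. The approach is genuinely different from the paper's, so a comparison is worthwhile.

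The paper does not induct on $n$. Instead it gives a one-shot geometric decomposition of the entire fan $F_{i,j}(n)$ (at all heights simultaneously) as the union of the sub-fan $F_{i,j-1}(n)$, a single vertical \emph{gallery} of cells running from the apex down to the base, and the sub-fan $F_{0,j}(n-1)$ sitting under that gallery. The recursion $(\ast)$ is then read off from the base rim lengths of these three pieces. The case $j=i+1$ is handled separately (the gallery alone is $F_{i,i+1}(n)$ minus $F_{0,i+1}(n-1)$), and the boundary cases are checked afterwards. This argument leans on a figure and on the reader's geometric intuition about how fans nest.

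Your inductive argument, by contrast, is figure-free and purely combinatorial: you match up the attached $1$-fans of $F_{i,j}(n)$ one-by-one with those of $F_{i,j-1}(n)$ and $F_{0,j}(n-1)$, using your structural hypothesis that every even-position rim letter is $a_0$. This buys you more: you obtain the exact word factorisation (Aux), not just the length identity, and you never invoke a picture. The cost is more bookkeeping.

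One small point of care: your description of the straddling $1$-fan as $\mathrm{Fan}(a_0,a_j)$ relies on the last even letter of $\vrim{F_{i,j-1}(n-1)}$ being $a_0$. When $j=i+1$ that rim is empty, so the relevant letter is instead $d_0=a_i$, and the straddling $1$-fan is $\mathrm{Fan}(a_i,a_{i+1})$---a single cell with rim $a_{i+1}^{-1}a_0$. Your conclusion (Aux) still holds verbatim in this case (the corresponding $1$-fan on the $F_{i,j-1}(n)=F_{i,i}(n)$ side is the degenerate $\mathrm{Fan}(a_i,a_i)$, contributing nothing), but it would be worth flagging this boundary case explicitly, just as the paper does.
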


\begin{figure}[ht]
    \includegraphics[width=\textwidth]{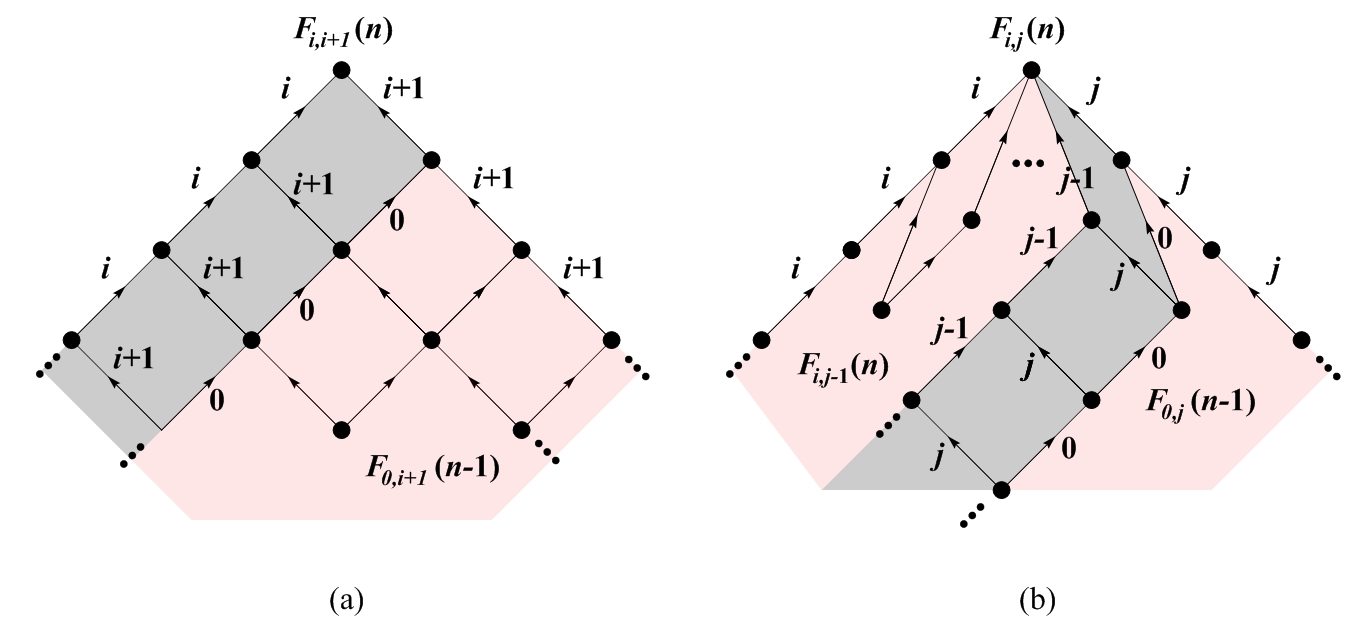}
    \caption{\tiny Decomposing fans using galleries (the indices $0,i,j$ etc. are used instead of the symbols $a_0,a_i,a_j$ respectively.)\normalsize\label{fig:fan_decomposition}}
\end{figure}

\begin{proof} First suppose $i+1=j$. Figure \ref{fig:fan_decomposition}(a) explains how the fan $F_{i,i+1}(n)$ is the union of a vertical gallery of identical cells with a fan of type $F_{0,i+1}(n-1)$, resulting in
\begin{displaymath}
	(\ast)'\quad f_{i,i+1}(n)\;=\;1+f_{0,i+1}(n-1)\,,\quad n\geq 1.
\end{displaymath}
Next, considering the case when $i<j-1$, figure \ref{fig:fan_decomposition}(b) explains how $F_{i,j}(n)$ decomposes into the union of $F_{i,j-1}(n)$, a gallery, and $F_{0,j}(n-1)$, resulting in the recursion $(\ast)$ for this case ($n\geq 2$).

We are only left to observe that the boundary conditions
\[f_{i,j}(0)=0\,,\quad f_{i,i}(n)=0\]
are consistent with the recursion: note how $(\ast)'$ becomes a special case of $(\ast)$ when the boundary conditions are invoked in the case $j=i+1$, and observe that $(\ast)$ holds for $n=1$ as well:
\[1+f_{i,j-1}(1)+f_{0,j}(0)=1+(j-1-i)+0=j-i=f_{i,j}(1)\,.\]
This completes the proof.
\end{proof}
We are now ready to prove:
\begin{prop}\label{polynomial growth of rims} For all $0\leq i<j\leq d$, the function $f_{i,j}$ is a polynomial in $n$ of degree $j$, in fact:
\begin{displaymath}
	f_{i,j}(n)=\sum_{k=i+1}^j\binom{n+k-1}{n-1}
\end{displaymath}
\end{prop}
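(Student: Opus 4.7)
The plan is to verify the closed-form by induction, using the recursion $(\ast)$ from Lemma~\ref{polynomial recursion} and the boundary conditions $f_{i,j}(0)=0$ and $f_{i,i}(n)=0$. Since $(\ast)$ expresses $f_{i,j}(n)$ in terms of $f_{i,j-1}(n)$ (the same $n$, smaller $j-i$) and $f_{0,j}(n-1)$ (strictly smaller $n$), a double induction is natural: primary induction on $n$, secondary induction on $j-i$ within each level of $n$. The identity \[f_{i,i}(n)=\sum_{k=i+1}^{i}\binom{n+k-1}{n-1}=0\] (empty sum) disposes of the secondary base case, and the primary base $n=0$ matches because $\binom{k-1}{-1}=0$ for all $k$. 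A quick check at $n=1$ recovers $f_{i,j}(1)=j-i$ as needed.

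The core of the argument is then the inductive step. Assuming the closed-form holds for $(i,j-1,n)$ and for $(0,j,n-1)$, the recursion gives
\[f_{i,j}(n)\;=\;\sum_{k=i+1}^{j-1}\binom{n+k-1}{n-1}\;+\;\sum_{k=1}^{j}\binom{n+k-2}{n-2}\;+\;1\,,\]
so the content is to verify
\[\binom{n+j-1}{n-1}\;=\;1+\sum_{k=1}^{j}\binom{n+k-2}{n-2}\,.\]
This follows from the hockey-stick identity $\sum_{m=r}^{N}\binom{m}{r}=\binom{N+1}{r+1}$: after the substitution $m=n+k-2$, the right-hand sum becomes $\sum_{m=n-1}^{n+j-2}\binom{m}{n-2}$, and adding the missing term $\binom{n-2}{n-2}=1$ extends the range to $m=n-2,\ldots,n+j-2$, whose sum is exactly $\binom{n+j-1}{n-1}$. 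Subtracting the $+1$ restores the equality.

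There is no serious obstacle here; the only subtlety is bookkeeping the ranges correctly when applying hockey-stick, and remembering to interpret $\binom{k-1}{-1}$ as $0$ so that the closed-form matches the boundary condition at $n=0$. Once the recursion $(\ast)$ is in hand, the rest is a one-line combinatorial identity, and the degree-in-$n$ claim follows immediately: the top term $k=j$ contributes $\binom{n+j-1}{n-1}$, a polynomial in $n$ of degree $j$, and the remaining terms are of strictly smaller degree, so $f_{i,j}(n)$ is polynomial in $n$ of degree exactly $j$.
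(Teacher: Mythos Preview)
Your proof is correct and follows essentially the same approach as the paper: both arguments feed the recursion $(\ast)$ from Lemma~\ref{polynomial recursion} into a standard binomial (hockey-stick/Pascal) identity. The only organizational difference is that the paper first solves the case $i=0$ explicitly as $f_{0,j}(n)=\binom{n+j}{n}-1$ and then unrolls $(\ast)$ finitely many times to reach general $i$, whereas you run a direct double induction on the closed form; the underlying computation is the same.
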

\begin{proof} First set $Q_j(n)=\binom{n+j}{n}-1$. One verifies that $Q_j(n)$ satisfies the recursive identity $(\ast)$ with $i=0$, and the same boundary conditions as $f_{0,j}(n)$. Therefore,
\begin{displaymath}
	f_{0,j}(n)=\binom{n+j}{n}-1\,.
\end{displaymath}
Applying $(\ast)$ an additional $(j-i)$ times, one has
\begin{displaymath}
	f_{i,j}(n)
		=(j-i)+\sum_{k=i+1}^j f_{0,k}(n-1)
		=(j-i)+\sum_{k=i+1}^j\left[\binom{n+k-1}{n-1}-1\right]\,,
\end{displaymath}
which produces the required result.
\end{proof}

We have not yet considered the most useful fans $\mathrm{Fan}(s^n,a_j^n)$, $j\leq d$. For $d\geq 1$ ($\Psi_d$ is not defined for $d<2$ anyway), looking at $\dnlink{\ast}{\PP{d}}$ it is obvious that $\mathrm{Fan}(s^n,a_0^n)$ and $\mathrm{Fan}(a_0^n,a_j^n)$ ($j\leq d$) may be concatenated to form a fan from $s^n$ to $a_j^n$; by uniqueness of fans, the result of this concatenation must be $\mathrm{Fan}(s^n,a_j^n)$. Since the letters $s$ and $a_0$ commute, $\erim{\mathrm{Fan}(s^n,a_0^n)}=n$, and we conclude:
\begin{prop}\label{cor:polynomial growth of rims} For all $a_d\neq u\in V\Psi_d$, we have: \[\left|\erim{\mathrm{Fan}(u^n,a_d^n)}\right|\asymp n^d\,,\quad \area{\mathrm{Fan}(u^n,a_d^n)}\asymp n^{d+1}\,.\quad\square\]
\end{prop}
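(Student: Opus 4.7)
The plan is to split on the identity of $u$. When $u = a_i$ for some $0 \le i \le d-1$, the fan in question is precisely $F_{i,d}(n)$, so Proposition~\ref{polynomial growth of rims} supplies the closed form
\[|\erim{F_{i,d}(n)}| = f_{i,d}(n) = \sum_{k=i+1}^d \binom{n+k-1}{n-1}.\]
The $k = d$ summand is $\binom{n+d-1}{d}$, a degree-$d$ polynomial in $n$ with positive leading coefficient $1/d!$, while all remaining terms have strictly smaller degree. This yields $|\erim{F_{i,d}(n)}| \asymp n^d$.

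The case $u = s$ is handled as indicated in the paragraph immediately preceding the statement: by uniqueness of fans, $\mathrm{Fan}(s^n, a_d^n)$ must coincide with the concatenation of $\mathrm{Fan}(s^n, a_0^n)$ and $F_{0,d}(n)$ along their common base $a_0^n$. Commutativity $[s, a_0] = 1$ collapses the first piece to a grid with $|\erim{\mathrm{Fan}(s^n, a_0^n)}| = n$, so
\[|\erim{\mathrm{Fan}(s^n, a_d^n)}| = n + f_{0,d}(n) = n + \binom{n+d}{n} - 1 \asymp n^d,\]
where for $d \ge 1$ the second summand dominates.

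For the area estimate, I propose to count $2$-cells in $F := \mathrm{Fan}(u^n, a_d^n)$ by horizontal bands via the recursive construction. Writing $F$ as a top sub-fan $F'$ of height $n-1$ with one height-one fan attached per wedge along the rim of $F'$, the bottom band contributes $\Theta(|\erim{F'}|)$ new cells: each attached $1$-fan has between $1$ and $d$ cells (the upper bound because every simple fan in the fixed finite graph $\Psi_d$ has length at most $d$), and the number of such $1$-fans equals $|\erim{F'}| + 1 = \Theta((n-1)^d)$ by the first part. Iterating yields
\[\area F = \sum_{k=1}^n \Theta(k^d) \asymp n^{d+1}.\]

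The main expected obstacle is the area lower bound. The upper bound in the display above is automatic from the count. The lower bound, however, depends essentially on Proposition~\ref{fans are embedded}, which ensures that $F$ embeds in $\tilPP{d}$, so that $2$-cells from distinct bands remain distinct and the band-by-band enumeration is sharp; without embeddedness, cells from different bands could in principle collide in the Cayley complex and deflate the count below $n^{d+1}$.
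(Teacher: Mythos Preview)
Your argument for the edge-rim length is correct and is exactly what the paper intends: the proposition is stated with a $\square$ and is meant as an immediate consequence of Proposition~\ref{polynomial growth of rims} together with the concatenation remark about $\mathrm{Fan}(s^n,a_0^n)$ in the paragraph just above. The area recursion you set up is also correct and is the natural way to obtain $n^{d+1}$ from the rim computation.

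There is one misconception worth flagging. You write that the lower bound on $\area{\mathrm{Fan}(u^n,a_d^n)}$ ``depends essentially on Proposition~\ref{fans are embedded}'' because otherwise cells from different bands might collide. This is not so. The fan is an \emph{abstract} disk diagram, built recursively by gluing height-one fans to $F'$ along $1$-cells only; no $2$-cells are ever identified in this gluing, so the area is genuinely additive band by band regardless of what happens under the map to $\tilPP{d}$. The quantity $\area{\mathrm{Fan}(u^n,a_d^n)}$ in the statement refers to this intrinsic cell count, not to the area of the image. Thus your summation $\sum_{k=1}^n \Theta(k^d)\asymp n^{d+1}$ already gives both the upper and the lower bound, with no appeal to embedding needed. (Proposition~\ref{fans are embedded} becomes essential only later, in Section~\ref{section:orthoplex}, where one needs the images of these fans to produce embedded disks in level sets for the Dehn function lower bound.) A minor point: the bound ``at most $d$ cells'' for a simple fan should be $d+1$ once $s$ is in play, since the diameter of $\dnlink{\ast}{\PP{d}}$ is $d+1$; this of course does not affect the $\Theta$-estimate.
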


\bigskip

\subsubsection*{Ascending Fans}\label{ascending fans} A completely symmetric construction to the construction of fans above is that of {\it ascending fans}: use the analogous definitions and constructions for a pair of {\it descending} words $u,v$ of equal height (words written in the inverses of the positive generators only). Since $\uplink{\ast}{\PP{d}}$ is a tree as well, all the preceding general results apply to ascending fans as well.

One needs to study the growth of rims for ascending fans of the form $\mathrm{Fan}\left(u^{-n},v^{-n}\right)$ ($u,v\in V\Psi_d$, $u\neq v$) as well. Again, looking at the diagram of $\link{\ast}{\PP{d}}$, one sees that ascending fans of this kind are embedded in the Cayley $2$-complex of the LOG presentation. We are therefore left to study only the growth of $\erim{\mathrm{Fan}\left(u^{-n},v^{-n}\right)}$.

For the purpose of the computation, we shall (again) denote by $F_{i,k}(n)$ the (ascending) fan with $u=a_i$ and $v=a_k$ and $F_{s,i}$ for $v=a_i$ and $u=s$ respectively. We use lower case $f$ instead of the upper case $F$ in this notation to denote the length of the horizontal rim of the corresponding fans.

\begin{figure}[h]
    \includegraphics[width=\textwidth]{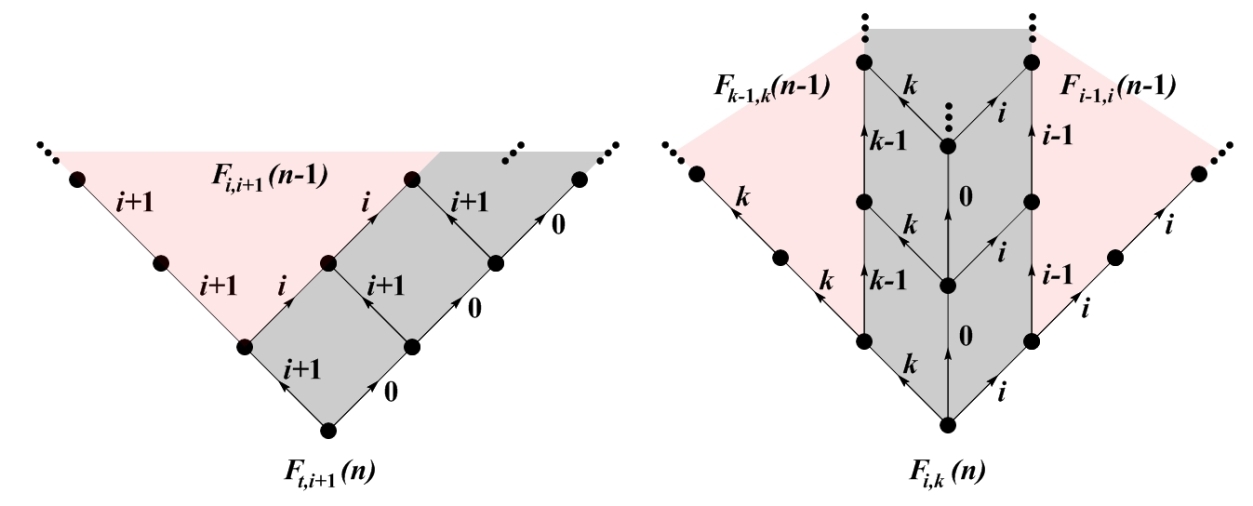}
    \caption{\tiny Decomposing ascending fans (the indices $i,k$ are used instead of the symbols $a_i,a_k$ with $0\leq i,k\leq d$).\normalsize\label{fig:ascending_fan_decomposition}}
\end{figure}

The figures above demonstrate the following recursive relations:
\begin{enumerate}
	\item $f_{0,i+1}(n)=1+f_{i,i+1}(n-1)$
	\item $f_{i,k}(n)=2+f_{i-1,i}(n-1)+f_{k-1,k}(n-1)$
\end{enumerate}
Since $a_0^-$ separates $s^-$ in $\uplink{\ast}{\PP{d}}$, and $[s,a_0]=1$, we will have that
\begin{displaymath}
	f_{s,0}(n)=n\,,\quad f_{s,i}(n)=f_{s,0}(n)+f_{0,i}(n)=n+f_{0,i}(n)
\end{displaymath}
Since $a_1$ and $a_0$ commute, we will also have
\begin{displaymath}
	f_{0,1}(n)=n.
\end{displaymath}
In addition, an obvious base value is $f_{i,k}(1)=2$ for $i,k\geq 1$.\\

In a manner similar to the case of descending fans, for $1\leq i<k$ one shows that $f_{i,k}(n)$ is a polynomial in $n$ of degree at most $k$. Since $s$ commutes with $a_0$ we deduce that $f_{s,k}(n)$ and $f_{0,k}(n)$ are, too, polynomials of degree at most $k$ in $n$, and we may conclude:
\begin{prop}\label{lemma:ascending fans are polynomial} For all $u,v\in V\Psi_d$, we have: \[\left|\erim{\mathrm{Fan}(u^{-n},v^{-n})}\right|\preceq n^d\,,\quad \area{\mathrm{Fan}(u^{-n},v^{-n})}\preceq n^{d+1}.\quad\square\]
\end{prop}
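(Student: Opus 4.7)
The plan is to use the two displayed recursion relations, together with the explicit boundary values supplied in the text, to show by induction on $k$ that each of the functions $f_{k-1,k}(n)$, $f_{i,k}(n)$ for $0 \le i < k$, and $f_{s,k}(n)$ is a polynomial in $n$ of degree at most $k$. Since $k \le d$ in every admissible case, this immediately yields the rim-length bound $\left|\erim{\mathrm{Fan}(u^{-n},v^{-n})}\right| \preceq n^d$ for all $u, v \in V\Psi_d$, and a summation over layers will yield the area bound.

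The core step is handling the diagonal quantities $g_k(n) := f_{k-1,k}(n)$. The base case is $g_1(n) = f_{0,1}(n) = n$, a degree-one polynomial. For the inductive step, setting $i = k-1$ in relation (2) produces the first-order difference equation
\[g_k(n) - g_k(n-1) \;=\; 2 + g_{k-1}(n-1),\]
whose right-hand side is a polynomial in $n$ of degree $k-1$ by the inductive hypothesis; telescoping from the boundary value $g_k(1) = 2$ then forces $g_k(n)$ to be a polynomial of degree exactly $k$. With the diagonal understood, the remaining families fall out without further recursion: relation (2) shows that $f_{i,k}(n)$ for $1 \le i < k$ is a polynomial of degree $\max(i,k) = k$; relation (1) shows $f_{0,k}(n) = 1 + g_k(n-1)$ has degree $k$; and the identities $f_{s,0}(n) = n$ and $f_{s,k}(n) = n + f_{0,k}(n)$ dispatch the rows involving the generator $s$. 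For the area bound, $\mathrm{Fan}(u^{-n},v^{-n})$ is assembled inductively in $n$ horizontal layers, the $m$-th layer consisting of simple fans attached along the rim of the previous layer; each simple fan contributes a bounded number of $2$-cells (the constant $C$ from Remark \ref{remark:exp upper bound}), so the total area is bounded by $\sum_{m=0}^{n-1} C \cdot f_{u,v}(m) \preceq n \cdot n^d = n^{d+1}$.

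The only point demanding care is verifying that the polynomial degree does not accidentally exceed $k$ as we iterate the recursion: this is automatic because every recursive step introduces only a constant together with a single polynomial term whose degree has already been bounded by the induction. We make no attempt to produce a matching lower bound on $f_{u,v}(n)$, which is the reason the conclusion is stated with $\preceq$ rather than $\asymp$; the sharp lower bound will play no role in the applications of this proposition.
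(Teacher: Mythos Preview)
Your proof is correct and follows exactly the route the paper intends: the paper sets up the recursions (1)--(2) and the boundary values, then simply asserts ``in a manner similar to the case of descending fans'' that each $f_{i,k}(n)$ is a polynomial of degree at most $k$, stating the proposition with a $\square$. Your argument is a clean fleshing-out of that sketch --- isolating the diagonal $g_k=f_{k-1,k}$, solving its first-order recursion by telescoping, and then reading off the remaining $f_{i,k}$, $f_{0,k}$, $f_{s,k}$ directly --- together with the expected layer-by-layer summation for the area bound.
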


\subsection{Exponential distortion -- the group $S_{\infty}$}\label{description:exp}

Here we remark on the distortion of $\ker(h)$ in the group $S_{\infty}$ obtained from the LOG presentation in figure \ref{figure:LOG_exp}.
Observing that the girth of $\link{\ast}{\PP{\infty}}$ equals $5$, we conclude $S_{\infty}$ is word-hyperbolic: $\tilPP{\infty}$ is CAT(0) and contains no flat plane, and hence it is Gromov-hyperbolic (see \cite{[Bridson-Haefliger]}, chapter III.$\Psi$, theorem 3.1). In particular, geodesics in $\tilPP{\infty}$ diverge (at least) exponentially.\\

\begin{figure}[ht]
    \includegraphics[width=\textwidth]{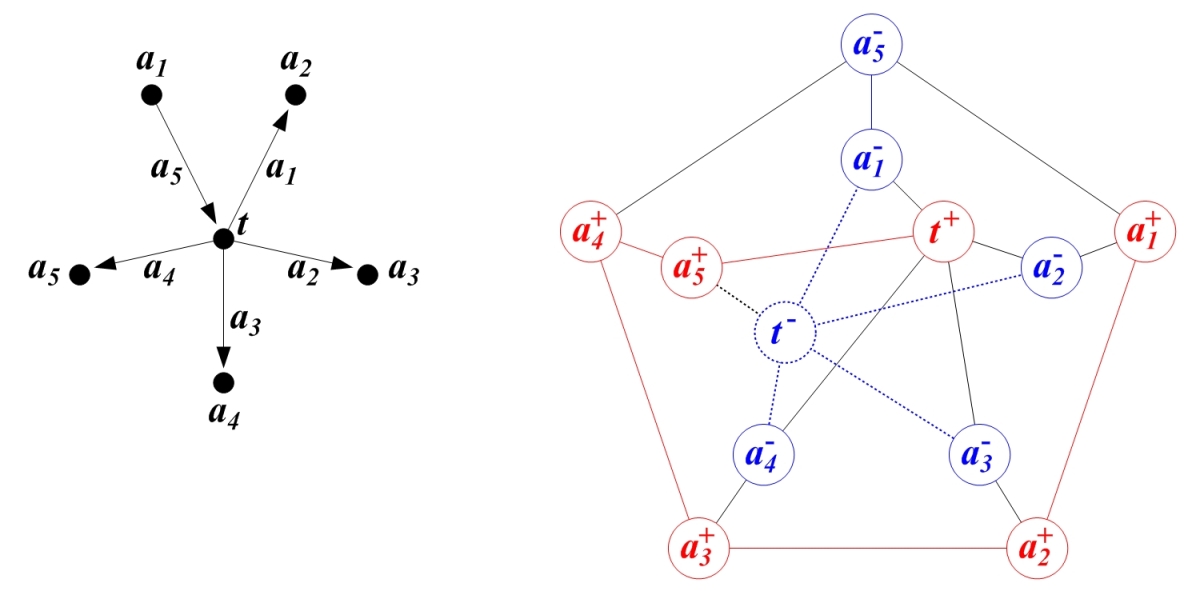}
    \caption{\tiny LOG presentation $\Psi_\infty$ and vertex link $\link{\ast}{\PP{\infty}}$ for the group $S_{\infty}$.\normalsize\label{figure:LOG_exp}}
\end{figure}

We consider the ascending words $u_n=a_1^n$ and $v_n=a_3^n$. Observe that the combinatorial distance of $a_1^+$ to $a_1^-$ in $\link{\ast}{\PP{\infty}}$ is $2$, and similarly for $a_3$. This guarantees that $(u_n)_{n=1}^\infty$ and $(v_n)_{n=1}^\infty$ are situated along geodesic rays -- denote them with $\gamma_1$ and $\gamma_3$ -- in $\tilPP{\infty}$ (with respect to the piecewise-Euclidean metric). Also, the combinatorial distance from $a_1^+$ to $a_3^+$ equals $2$, implying $\angle(\gamma_1,\gamma_3)=\pi$ and hence $\gamma_1(\infty)\neq\gamma_3(\infty)$. Hence, if $w_n$ is any word in the letters $\{x_e\}_{e\in E\Psi_{\infty}}$ satisfying $w_n=u_nv_n\inv$, then $w_n$ avoids the ball of radius $n$ about $u_n$, and hence $|w|$ is at least exponential in $n$, by exponential divergence of geodesics (in a Gromov-Hyperbolic space). The implication for (descending) fans is:
\begin{prop}\label{prop:exponential}
For the presentation of $S_\infty$, one has $\left|\erim{\mathrm{Fan}(a_1^n,a_3^n)}\right|\asymp e^n$. In particular, $\area{\mathrm{Fan}(a_1^n,a_3^n)}\asymp e^n$.
\end{prop}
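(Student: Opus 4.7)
The upper bound is immediate from Remark \ref{remark:exp upper bound}: since $\link{\ast}{\PP{\infty}}$ has girth $5$ (hence is triangle-free) and both its ascending and descending links are trees, the remark supplies $C>0$ with $|\erim{\mathrm{Fan}(a_1^n,a_3^n)}|\leq C^n$ and $\area{\mathrm{Fan}(a_1^n,a_3^n)}\leq C^{n+1}$, yielding $\preceq e^n$ for both quantities.

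For the matching lower bound, set $u_n=a_1^n$, $v_n=a_3^n$, $g_n=u_n v_n\inv$, and let $w_n=\erim{\mathrm{Fan}(u_n,v_n)}$ be the reduced word in the free basis $\{x_e\}_{e\in E\Psi_\infty}$ of $\ker(h)$ representing $g_n$. The plan is to realize $w_n$ as a combinatorial path in $\tilPP{\infty}$ from the basepoint $o$ to $g_n$: each letter $x_e$ is traced along the height-preserving diagonal of its relator cell $\cell{e}$, so this path lies entirely in the level-$0$ tree $T_0=h\inv(0)$ and has ambient length at most $L|w_n|$, where $L$ uniformly bounds the diameter of a relator cell. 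Since $h(u_n)=n$ and $h$ is $1$-Lipschitz, every point on this path is at ambient distance $\geq n$ from $u_n$, so the path avoids the open ball $B(u_n,n)$.

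Next I would identify the geodesic $[o,g_n]$ explicitly. The direction at $u_n$ pointing back toward $o$ (reversing the last edge of $u_n$) corresponds to $a_1^+\in\link{u_n}{\tilPP{\infty}}$, while the direction pointing toward $g_n$ (descending via the first edge of $v_n\inv$) corresponds to $a_3^+$. By the angle computation preceding the proposition, these vertices lie at combinatorial distance $2$ in the link, so by the CAT(0) angle criterion the concatenation $o\to u_n\to g_n$ meets at angle $\pi$ at $u_n$ and is therefore a geodesic of length $2n$ with $u_n$ as its midpoint. Since $\tilPP{\infty}$ is Gromov-hyperbolic (via \cite{[Bridson-Haefliger]}, III.$\Psi$.3.1, as noted above), exponential divergence of geodesics in a $\delta$-hyperbolic space forces any path from $o$ to $g_n$ avoiding $B(u_n,n)$ to have ambient length at least $Ke^{cn}$ for constants $K,c>0$. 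Combining, $L|w_n|\geq Ke^{cn}$, hence $|w_n|\succeq e^n$. The area bound follows from the recursive construction of fans: $\mathrm{Fan}(a_1^n,a_3^n)$ is built by attaching to $\mathrm{Fan}(a_1^{n-1},a_3^{n-1})$ a layer of $|\erim{\mathrm{Fan}(a_1^n,a_3^n)}|$ new cells along its top rim, giving $\area{\mathrm{Fan}(a_1^n,a_3^n)}=\sum_{k=1}^n|\erim{\mathrm{Fan}(a_1^k,a_3^k)}|\succeq e^n$.

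The principal obstacle is the quantitative exponential divergence estimate invoked in the third paragraph: one needs the statement that in a $\delta$-hyperbolic geodesic space, any path between the endpoints of a geodesic segment $[p,q]$ that avoids a ball of appropriate size around its midpoint has length at least exponential in $d(p,q)$. This is standard in the hyperbolic setting via the thin-triangle condition and the exponential growth of metric spheres, but one must either cite a clean formulation from the literature or re-derive it by a projection/slim-triangle argument in $\tilPP{\infty}$.
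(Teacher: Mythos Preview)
Your argument is correct and follows essentially the same route as the paper: the upper bound via Remark~\ref{remark:exp upper bound}, and the lower bound by realizing the edge-rim as a level-$0$ path that avoids $B(u_n,n)$ and then invoking exponential divergence of geodesics in the Gromov-hyperbolic space $\tilPP{\infty}$. Your version is in fact slightly more explicit than the paper's sketch---you verify directly that $o\to u_n\to g_n$ is a CAT(0) geodesic via the link-angle at $u_n$, and you spell out the area identity $\area{F_n}=\sum_{k\leq n}|\erim{F_k}|$---but the underlying strategy is the same.
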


\section{Perturbed RAAGs (PRAAGs)}
Throughout this section, let $\Gamma$ be a simple (no loops, no double edges) directed graph. Every $e\in E\Gamma$ has well-defined initial and terminal vertices $\init e$ and $\term e$, respectively.\\

Recall from section \ref{back} that the Artin group $A_\Gamma$ has a $K(A_\Gamma,1)$ in the form of the non-positively curved cubical complex $X_\Gamma$, as well as an epimorphism (or height function) $h$ of $A_\Gamma$ onto $\ZZ$, coinciding with the restriction (to $\widetilde{X}_\Gamma^{(0)}$) of the lift of a Morse function $\bar{h}:X_\Gamma\to\fat{S}^1$ and sending every $v\in V\Gamma$ to $1\in\ZZ$.\\

The same constructions may be applied to any sub-graph $H\subset\Gamma$, inducing convex embeddings $X_H\hookrightarrow X_\Gamma$ and consequently convex isometric embeddings $\widetilde{X}_H\hookrightarrow\widetilde{X}_\Gamma$ and monomorphisms $A_H\hookrightarrow A_\Gamma$ commuting with the respective Morse/height functions.

\subsection{Definition of PRAAGs}
To simplify notation let $\NN$ denote the set of natural numbers together with $0$, and let $\NNhat$ denote $\NN\cup\{\infty\}$.\\

For every edge $f\in E\Gamma$ consider the link $\link{f}{K_\Gamma}$ of $f$ in the flag complex $K_\Gamma$ generated by $\Gamma$: to be sure, $\link{f}{K_\Gamma}$ consists of all (open) simplices $\sigma\in K_\Gamma$ such that the join $\sigma\join f\in K_\Gamma$.\\

A {\it marking} on $\Gamma$ is a function $d:E\Gamma\to\NNhat$. We say that a marking $d$ on $\Gamma$ is {\it admissible}, if the set $F$ of edges $f\in E\Gamma$ with $d(f)\geq 1$ satisfies the following condition: for any $e,f\in F$, either $e=f$ or $\bar e\cap\link{f}{K_\Gamma}=\varnothing$.\\

We fix an admissible marking $d$ till the end of this section.\\

For $f\in E\Gamma$, let $\Sigma_f$ denote the sub-complex obtained by deleting the open star of $f$ from its closed star in $K_\Gamma$. We have:
\begin{displaymath}
	\Sigma_f=\link{f}{K_\Gamma}\join\{\init f,\term f\}\,.
\end{displaymath}
The marking $d$ is admissible, so observe that $\Sigma_f$ is a full sub-complex of $K_{\Gamma-F}$ for every $f\in F$. Corresponding to this in $\link{\ast}{X_{\Gamma-F}}=S(K_{\Gamma-F})$ is the full sub-complex
\begin{displaymath}
	S(\Sigma_f)=S(\link{f}{K_\Gamma})\join S\left(\{\init f,\term f\}\right)\,.
\end{displaymath}
Let $\Gamma_f$ denote the $1$-skeleton of $\Sigma_f$ and let $\Lambda_f\subset\Gamma_f$ denote the $1$-skeleton of $\link{f}{K_\Gamma}$ (see figure \ref{fig:lambda}).

\begin{figure}[hbt]
    \includegraphics[width=0.3\textwidth]{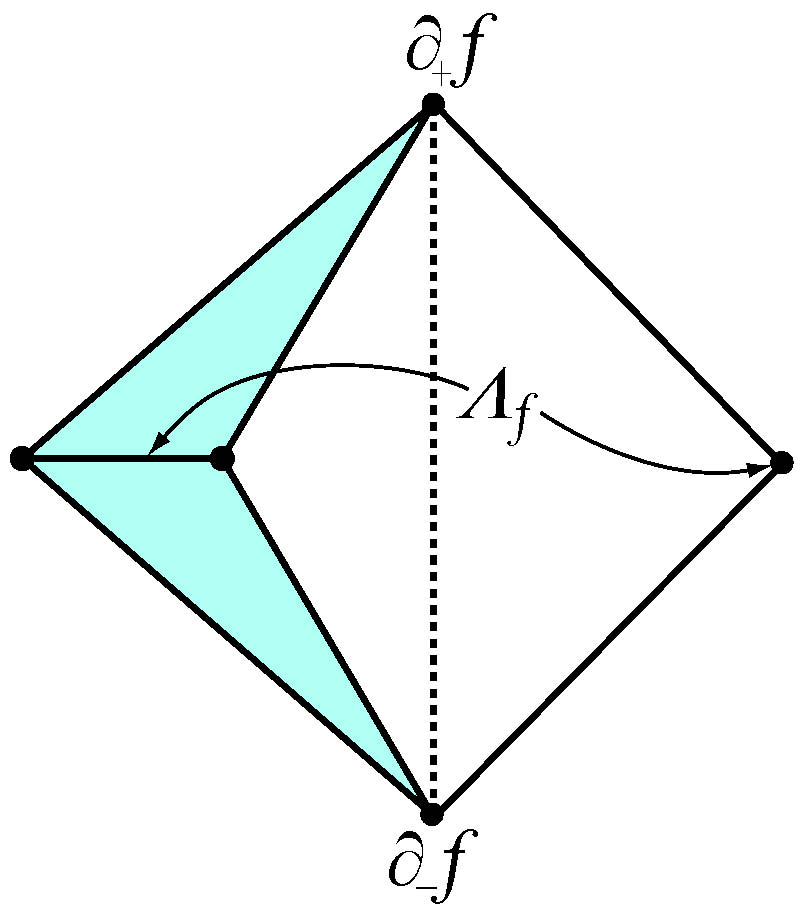}
    \caption{\tiny $\Sigma_f$ and $\Lambda_f$ for an edge $f\in E\Gamma$.\normalsize\label{fig:lambda}}
\end{figure}

We form the space $Y_f=X_{\Lambda_f} \times \PP{d(f)}$. We denote the vertices of $\Psi_{d(f)}$ by $s_f$ and $a_{f,i}$ in correspondence with the generators $s$ and $a_i$ discussed in section \ref{section:the groups S_d} for the groups $S_d$, $i=0,\ldots,d(f)$ for $d(f)\in\NN$ and $i=1,\ldots,5$ for $d(f)=\infty$.\\

The NPC cubical complex $Y_f$ is completely determined by the link of its only vertex:
\begin{displaymath}
	\link{\ast}{Y_f}=S\left(\link{f}{K_\Gamma}\right)\join\link{\ast}{\PP{d(f)}}\,.
\end{displaymath}
For $1\leq d(f)<\infty$ observe that the four vertices $s_f^\pm$ and $a_{f,d(f)}^\pm$ in $\link{\ast}{\PP{d(f)}}$ have no edges joining them (see figure \ref{figure:LOG_poly}). For $d(f)=\infty$, the quadruple of vertices $a_{f,1}^\pm$ and $a_{f,3}^\pm$ in $\link{\ast}{\PP{\infty}}$ has the same property (figure \ref{figure:LOG_exp}). We define a gluing map
\begin{displaymath}
\varphi_f:S(\Sigma_f) \to \link{\ast}{Y_f}= S\left(\link{f}{K_\Gamma}\right) \join \link{\ast}{\PP{d(f)}}
\end{displaymath}
by setting $\varphi_f$ to be the identity on $\link{f}{K_\Gamma}$ (and hence on $S(\link{f}{K_\Gamma})$) and setting $\varphi_f(\init f^\pm)=s_f^\pm$ and $\varphi_f(\term f^\pm)=a_{f,d(f)}^\pm$ when $d(f)\neq \infty$, or $\varphi_f(\init f^\pm)=a_{f,1}^\pm$ and $\varphi_f(\term f^\pm)=a_{f,3}^\pm$ for $d(f)=\infty$. The partial map $\varphi_f$ thus defined then extends uniquely to a simplicial embedding of the whole of $S(\Sigma_f)$ in $\link{\ast}{Y_f}$. The resulting map $\varphi_f$ then induces an embedding
\begin{displaymath}
	\Phi_f:X_{\Gamma_f}\to Y_f=X_{\Lambda_f}\times\PP{d(f)}\,.
\end{displaymath}

Finally, define the group $A_\Gamma(F,d)$ to be the fundamental group of the cubical complex $X_\Gamma(F,d)$ resulting from attaching the spaces $Y_f$ to $X_{\Gamma-F}$ along the maps $\{\Phi_f\}_{f\in F}$. The admissibility of the marking ensures that $X_\Gamma(F,d)$ is well-defined. Thus, $A_\Gamma(F,d)$ can be expressed as the fundamental group of a tree of groups (an amalgam) with vertex groups $A_{\Gamma-F}$ and $A_{\Lambda_f}\times S_{d(f)}$ for each $f\in F=\{f_1,\ldots,f_n\}$, amalgamated over subgroups of the form $A_{\Lambda_f}\times F_2$ -- see figure \ref{fig:tree of groups}.
\begin{figure}[ht]
\begin{tikzpicture}[description/.style={fill=white,inner sep=4pt}]
	\matrix(m)[matrix of math nodes, row sep=2em, column sep=3em, text height=1.5ex, text depth=0.25ex, font=\large]{
			&	 	& {A_{\Lambda_{f_1}}\times S_{d(f_1)}} \\
			&	 	& {A_{\Lambda_{f_2}}\times S_{d(f_2)}} \\
		{A_{\Gamma-F}} &  &  \\
			& 	&	{A_{\Lambda_{f_n}}\times S_{d(f_n)}} \\
	};
	\path[solid,font=\normalsize]
	(m-3-1) edge node[description,sloped] {$A_{\Lambda_{f_1}}\times F_2$} (m-1-3.base west)
					edge node[description,sloped] {$A_{\Lambda_{f_2}}\times F_2$} (m-2-3.west)
					edge node[description,sloped] {$A_{\Lambda_{f_n}}\times F_2$} (m-4-3.west);
	\path[loosely dotted] (m-2-3.south) edge (m-4-3.north);
\end{tikzpicture}
	\caption{\tiny The group $A_\Gamma(F,d)$ as a tree of groups.\label{fig:tree of groups}}
	\normalsize
\end{figure}
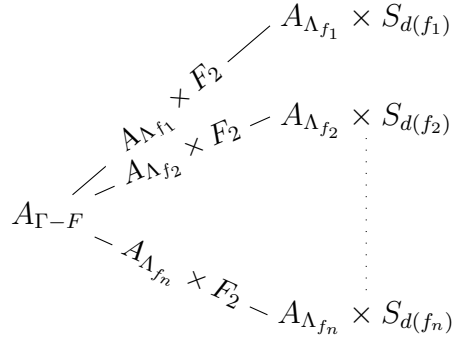
\begin{remark}[Degree zero edges] Carrying out the plumbing-and-filling on an individual degree zero (edges $f\in E\Gamma$ with $d(f)=0$) results in no alteration of $X_\Gamma$.
\end{remark}
\begin{remark}[Degree one edges] Carrying out the prescribed gluings for degree one edges ($d(f)=1$) is equivalent to subdividing the edge $f$ in $K_\Gamma$ once and adding the corresponding Artin relations to the presentation of $A_\Gamma$.
\end{remark}
In the next paragraph, we shall prove that $X_\Gamma(F,d)$ is, in fact, non-positively curved, and that the transition from $X_\Gamma$ to $X_\Gamma(F,d)$ does not alter the homotopy type of the ascending and descending links of the vertex $(\ast)$.

\subsection{Curvature conditions}
Recall that a cubical complex is non-positively curved (NPC) if and only if the link of each vertex is a flag complex. Suppose we are given a pair of NPC finite-dimensional cubical complexes $\hsm{C}_1$ and $\hsm{C}_2$, with $\hsm{C}_i^{(0)}$ containing only one vertex $v_i$. In each $\hsm{C}_i$ we identify a sub-complex $Q_i$ and an isomorphism $\Phi:Q_1\to Q_2$, and then glue along $\Phi$ to produce $\hsm{C}=\hsm{C}_1\sqcup_\Phi\hsm{C}_2$. Let $\Pi:\hsm{C}_1\sqcup\hsm{C}_2\to\hsm{C}$ be the gluing map identifying every $x\in Q_1$ with $\Phi(x)\in Q_2$.

When can we conclude that $\hsm{C}$ is non-positively curved? Look at the link of the only vertex $v=\pi(v_1)=\pi(v_2)$: the cellular map $\Pi:\hsm{C}_1\sqcup\hsm{C}_2\to\hsm{C}$ induces a simplicial map $\pi:\link{v_1}{\hsm{C}_1}\sqcup\link{v_2}{\hsm{C}_2}\to\link{v}{\hsm{C}}$. Each of the $\pi\big|_{\link{v_i}{\hsm{C}_i}}$ is an isomorphism onto its image, and the only identifications are of the form $\pi(u_1)=\pi(u_2)$ for $u_1\in\link{v_1}{Q_1}$ and $u_2\in\link{v_2}{Q_2}$ satisfying $\link{v_1}{\Phi}(u_1)=u_2$. Thus, by the following lemma, in order for $\hsm{C}$ to be NPC, it suffices to show that each of the links $\link{v_i}{Q_i}$ is full in $\link{v_i}{\hsm{C}_i}$.
\begin{lemma}\label{gluing lemma}
Let $K_1$ and $K_2$ be flag complexes. Suppose that:
\begin{enumerate}
	\item $L_1 \subset K_1$ is a non-empty full sub-complex of $K_1$,
	\item $\varphi: L_1 \to K_2$ is a simplicial embedding, and --
	\item $\varphi(L_1)^{(1)}$ is full in $K_2^{(1)}$.
\end{enumerate}
Then $K=K_1 \sqcup_{\varphi} K_2$  is a flag complex. In particular, $K$ is a flag complex if both $L_1$ and $\varphi(L_1)$ are full sub-complexes of $K_1$ and $K_2$, respectively.
\end{lemma}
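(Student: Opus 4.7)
The plan is to verify the flag condition for $K=K_1\sqcup_\varphi K_2$ directly: every clique in $K^{(1)}$ must span a simplex of $K$. I identify $K_1$ and $K_2$ with their canonical images in $K$, so that $K=K_1\cup K_2$ and $K_1\cap K_2$ is the identified copy of $L_1\cong\varphi(L_1)$. The only structural fact about the pushout I will need is that every edge of $K$ is the image of an edge of $K_1$ or of an edge of $K_2$, which follows because the gluing is performed along a subcomplex.

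The first step is a ``no crossing edges'' observation: a clique $C\subseteq K^{(0)}$ cannot contain both a vertex $u\in K_1\setminus L_1$ and a vertex $v\in K_2\setminus\varphi(L_1)$, since the required edge $uv$ would lift neither to an edge of $K_1$ (because $v\notin K_1$) nor to an edge of $K_2$ (because $u\notin K_2$). Hence every clique of $K^{(1)}$ sits entirely inside $K_1^{(0)}$ or entirely inside $K_2^{(0)}$.

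Assume without loss of generality that $C\subseteq K_1^{(0)}$. I want to show $C$ is already a clique of $K_1^{(1)}$, so that flagness of $K_1$ supplies a simplex of $K_1$ spanning $C$, which then maps to the desired simplex of $K$. Given an edge $\{u,v\}$ of $C$, it lifts to an edge of $K_1$ (done) or of $K_2$; in the latter case both $u$ and $v$ must lie in $\varphi(L_1)$, and hypothesis (3) that $\varphi(L_1)^{(1)}$ is full in $K_2^{(1)}$ forces this edge to lie in $\varphi(L_1)$, so that via $\varphi^{-1}$ it corresponds to an edge of $L_1\subseteq K_1$. Thus every edge of $C$ is already an edge of $K_1$, as desired. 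The symmetric case $C\subseteq K_2^{(0)}$ is handled analogously using hypothesis (1): a full subcomplex automatically has a full $1$-skeleton (if $u,v\in L$ span an edge of $K$, then that edge is a simplex of $K$ with all vertices in $L$, hence by fullness a simplex of $L$), so $L_1^{(1)}$ is full in $K_1^{(1)}$, and the same argument runs in the other direction.

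The only delicate point is the step where an edge of $K_2$ with endpoints in $\varphi(L_1)$ must be seen to also live in $L_1\subseteq K_1$; this is exactly the role of the $1$-skeleton fullness hypothesis, and everything else is bookkeeping about the pushout. The ``in particular'' clause then follows immediately, since fullness as a subcomplex implies fullness on $1$-skeletons by the observation just noted, so hypotheses (1) and (3) are both automatic in that stronger setting. I do not anticipate any genuine obstacle beyond this careful accounting.
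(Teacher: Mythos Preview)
Your proof is correct and follows essentially the same strategy as the paper's: show that a clique in $K^{(1)}$ must lie entirely inside one of the $\pi(K_i)$, and then invoke flagness of that piece. Your ``no crossing edges'' step corresponds to the paper's contradiction argument when both $V_1$ and $V_2$ are nonempty, and your verification that all edges of the clique already live in the chosen $K_i$ (using hypothesis~(3) on one side and hypothesis~(1) on the other) is more explicit than the paper's somewhat terse Case~2, but the logic is the same.
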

\begin{proof} Let $\Delta$ be a sub-complex of $K$ isomorphic to the $1$-skeleton of an $n$-dimensional simplex for some $n\geq 2$.

Let $L$ denote the image of $L_1$ in $K$ under the natural projection $\pi:K_1\sqcup K_2\to K$ and let $V=\Delta^{(0)}\minus L^{(0)}$. 

Suppose $V$ is empty, so that $\Delta^{(0)}\subseteq L$. Since $L_1$ is full in $K_1$, $L$ is full in $\pi(K_1)$. Thus every edge of $\pi(K_1)$ joining two vertices of $\Delta$ is and edge belonging to $L=\pi(K_1)\cap\pi(K_2)$. Hence all edges of $K$ joining vertices of $\Delta$ are edges of $\pi(K_2)$. Since $\pi(K_2)$ is flag, this means that $\Delta$ bounds an $n$-simplex in $\pi(K_2)$ and we are done.

Thus, we may assume $V$ is non-empty.

Suppose $\Delta\not\subset\pi(K_i)$ for $i=1,2$ and let $V_i=V\cap\pi(K_i)$. Since $\Delta\not\subset\pi(K_i)$ for both $i$, none of the $V_i$ is empty. But then the open simplex spanned by $V$ (it exists, since $V$ is a proper subset of $\Delta^{(0)}$) does not belong to either $K_i$ -- contradiction.

Thus, $\Delta\subset\pi(K_i)$ for at least one of $i=1,2$. Observe that $\pi\big|_{K_i}$ is an isomorphism onto its image (for both $i=1,2$). Since $K_i$ is full, so is $\pi(K_i)$, and $\Delta$ bounds a simplex in $\pi(K_i)\subset K$ -- we are done.
\end{proof}
\begin{corollary} Let $\Gamma$ be an oriented simple graph and let $d:F\to\NNhat$ be an admissible marking with support $F$. Then the cubical complex $X_\Gamma(F,d)$ is non-positively curved.
\end{corollary}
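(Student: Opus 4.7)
The plan is to build $X_\Gamma(F,d)$ from $X_{\Gamma-F}$ by attaching the pieces $Y_{f_1},\ldots,Y_{f_n}$ one at a time (where $F=\{f_1,\ldots,f_n\}$), and to verify non-positive curvature by an induction on $k$ using Lemma \ref{gluing lemma}. At each stage $k$, the gluing happens at the unique vertex $(\ast)$, so it is enough to apply the lemma to the links with $K_1=\link{\ast}{\hsm{C}_{k-1}}$ (the link in the intermediate complex), $K_2=\link{\ast}{Y_{f_k}}$, $L_1=S(\Sigma_{f_k})$, and $\varphi=\varphi_{f_k}$. The base case $\hsm{C}_0=X_{\Gamma-F}$ is NPC since it is a standard RAAG cube complex, and every $Y_{f_k}=X_{\Lambda_{f_k}}\times\PP{d(f_k)}$ is NPC because both factors are: $X_{\Lambda_{f_k}}$ is a RAAG complex, and $\PP{d(f_k)}$ was shown in Section \ref{section:the groups S_d} to have a link of girth at least $4$.

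To apply Lemma \ref{gluing lemma} at step $k$, the two fullness conditions must be checked. For hypothesis (3)—that $\varphi_{f_k}(S(\Sigma_{f_k}))$ is full in $\link{\ast}{Y_{f_k}}=S(\link{f_k}{K_\Gamma})\join\link{\ast}{\PP{d(f_k)}}$—observe that
\[\varphi_{f_k}(S(\Sigma_{f_k}))=S(\link{f_k}{K_\Gamma})\join\{s_{f_k}^\pm,a_{f_k,d(f_k)}^\pm\}\]
(or the analogous four-point set $\{a_{f_k,1}^\pm,a_{f_k,3}^\pm\}$ when $d(f_k)=\infty$). The first factor is trivially full in itself, and the crucial point—built into the design of the groups $S_d$ and explicitly noted in Section \ref{section:the groups S_d}—is that the chosen four vertices are pairwise non-adjacent in $\link{\ast}{\PP{d(f_k)}}$, so the discrete $4$-point sub-complex is full there. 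A join of full sub-complexes is full in the join, so (3) holds. For hypothesis (1)—that $S(\Sigma_{f_k})$ is full in $\link{\ast}{\hsm{C}_{k-1}}$—admissibility tells us $\Sigma_{f_k}$ is a full sub-complex of $K_{\Gamma-F}$, so Proposition \ref{props of sphere construction}(1) yields $S(\Sigma_{f_k})$ full in $S(K_{\Gamma-F})=\link{\ast}{X_{\Gamma-F}}$.

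The one subtle point, and the place where I expect to have to be careful, is showing that fullness of $S(\Sigma_{f_k})$ is preserved as we pass from $\link{\ast}{X_{\Gamma-F}}$ to $\link{\ast}{\hsm{C}_{k-1}}$ through the previous $k-1$ gluings. The key observation is that each gluing is realized at the level of links as a pushout that identifies $S(\Sigma_{f_j})$ with its image under the simplicial embedding $\varphi_{f_j}$, so the only simplices in $\link{\ast}{\hsm{C}_{k-1}}$ are those inherited from $\link{\ast}{X_{\Gamma-F}}$ or from some $\link{\ast}{Y_{f_j}}$. If $\sigma$ is a simplex in $\link{\ast}{\hsm{C}_{k-1}}$ with $\sigma^{(0)}\subset V(S(\Sigma_{f_k}))\subset V(S(K_{\Gamma-F}))$, then $\sigma$ cannot use any vertex from the ``interior'' of a $\link{\ast}{Y_{f_j}}$, so it either lies in $\link{\ast}{X_{\Gamma-F}}$ (and we are done by the above) or it lies in $\varphi_{f_j}(S(\Sigma_{f_j}))$ for some $j<k$, in which case, since $\varphi_{f_j}$ is a simplicial embedding onto the full image, $\sigma$ pulls back to a simplex of $S(\Sigma_{f_j})\subset S(K_{\Gamma-F})$ and we reduce to the previous case. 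With fullness preserved, the inductive step goes through and the corollary follows.
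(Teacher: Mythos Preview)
Your proof is correct and follows essentially the same approach as the paper's: an induction on the number of attached pieces, using the gluing lemma at each stage and checking that $S(\Sigma_{f_k})$ remains full in the intermediate link. The only cosmetic difference is that the paper packages your final paragraph as an explicit inductive invariant---``$\link{\ast}{X_{\Gamma-F}}$ is full in $\link{\ast}{X_j}$''---and verifies it on edges (using that the complexes are flag), whereas you argue the same fact directly on simplices via the pushout description; both routes are valid.
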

\begin{proof} Write $F=\{f_1,\ldots,f_r\}$ and $d(f_i)=d_i$ ($i=1,\ldots,r$). Start with $X_0=X_{\Gamma-F}$ and define inductively $X_{i+1}$ to be the result of attaching the space $Y_{f_i}$ (as described in the preceding paragraph) to $X_i$ using the attaching map $\Phi_{f_i}$.

Forming the space $X_1$ is a direct application of the last lemma, and we have that the complex $\link{\ast}{X_{\Gamma-F}}$ is a full sub-complex of $\link{\ast}{X_1}$.

To to see that it is possible to apply the lemma in the subsequent stages we use induction on $i$. Suppose that for some $j\geq 1$ the cubical complex $X_j$ is non-positively curved and that $\link{\ast}{X_{\Gamma-F}}$ is a full sub-complex in $\link{\ast}{X_j}$. We will prove that the same holds for $X_{j+1}$.

Let $f=f_{j+1}$. Since $\Sigma_f$ is full in $K_{\Gamma-F}$, we have that $S(\Sigma_f)$ is full in $S(K_{\Gamma-F})=\link{\ast}{X_{\Gamma-F}}$. Therefore, $S(\Sigma_f)$ is a full sub-complex of $X_j$, and we may apply the gluing lemma to the map $\varphi_f:S(\Delta_f)\to S(\link{f}{K_\Gamma})\join\link{\ast}{\PP{d(f)}}$ to conclude that $X_{j+1}$ is non-positively curved.

It remains to verify that $\link{\ast}{X_{\Gamma-F}}$ is full in $\link{\ast}{X_{j+1}}$. Since both complexes are simplicial flag complexes, it suffices to show that if $\sigma$ is an edge in $\link{\ast}{X_{j+1}}$ joining two vertices of $\link{\ast}{X_{\Gamma-F}}$, then $\sigma$ is an edge of $\link{\ast}{X_{\Gamma-F}}$.

If $\sigma\in\link{\ast}{X_j}$ then $\sigma\in S(K_{\Gamma-F})$, by the inductive hypothesis. However, $\sigma\in\link{\ast}{X_{j+1}}\minus\link{\ast}{X_j}$ connects two vertices of $S(K_{\Gamma-F})$ by construction, and we are done.
\end{proof}

\subsection{Finiteness properties} Once again, $\Gamma$ is an oriented simple graph and $d:E\Gamma\to\NNhat$ is an admissible marking.

Now we turn to the topological properties of the spaces $X_\Gamma(F,d)$. Keeping the same notation as before, we observe an important property of the gluing maps $\Phi_f$ ($f\in F$): positive vertices of $\link{\ast}{X_{\Gamma-F}}=S(K_{\Gamma-F})$ are identified with positive vertices of $\link{\ast}{Y_f}$, and the same is true for negative vertices. This means that the Morse functions we had defined on the complexes $\widetilde{X}_{\Gamma-F}$, $\widetilde{X}_\Gamma$ and $Y_f$ ($f\in F$) are all compatible, resulting in a Morse function $\widetilde{h}$ on $\widetilde{X}_\Gamma(F,d)$. Now the fact that the ascending and descending links in the spaces $\PP{d(f)}$ are trees comes into play one more time:

\begin{lemma}\label{same topology of links} Let $h:\widetilde{X}_\Gamma\to\RR$ be the standard Morse function of $A_\Gamma$ and let $\widetilde{h}:\widetilde{X}_\Gamma(F,d)\to\RR$ denote the Morse function corresponding to the homomorphism (also denoted by $\widetilde{h}:A_\Gamma(F,d)\to\ZZ$) sending all the generators $v\in V\Gamma$ and all $a_{f,i}$ and $s_f$ to $1\in\ZZ$. Then both the ascending and descending links of $\widetilde{h}$ are homotopy-equivalent to $K_\Gamma$.
\end{lemma}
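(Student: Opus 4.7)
The plan is to describe both links of $\widetilde{h}$ at the single vertex $\ast$ of $X_\Gamma(F,d)$ as pushouts gluing $K_{\Gamma-F}$ to contractible pieces, to exhibit a parallel pushout description of $K_\Gamma$, and to connect the two via the gluing lemma for homotopy equivalences of pushouts. First I would unpack the links: since $\varphi_f$ preserves the $\pm$-decoration and the ascending link in a product converts to a join of ascending links, one has
\begin{equation*}
\uplink{\ast}{X_\Gamma(F,d)}\;=\;K_{\Gamma-F}\;\bigcup_{\bigsqcup_{f\in F}\Sigma_f}\;\bigsqcup_{f\in F}\bigl(\link{f}{K_\Gamma}\join T_f\bigr),
\end{equation*}
where $T_f=\uplink{\ast}{\PP{d(f)}}$ is a tree and the attaching map is the identity on $\link{f}{K_\Gamma}$ and sends the pair $\{\init f,\term f\}\subset\Sigma_f$ to the designated pair $\{s_f^-, a_{f,d(f)}^-\}\subset T_f$ (or $\{a_{f,1}^-, a_{f,3}^-\}$ when $d(f)=\infty$). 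Admissibility of $d$ ensures each $\Sigma_f$ is a full subcomplex of $K_{\Gamma-F}$ and that distinct $\Sigma_f$'s only meet inside $K_{\Gamma-F}$, so the iterated pushout is unambiguous.

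Next I would write down the parallel pushout
\begin{equation*}
K_\Gamma\;=\;K_{\Gamma-F}\;\bigcup_{\bigsqcup_{f\in F}\Sigma_f}\;\bigsqcup_{f\in F}\bigl(\link{f}{K_\Gamma}\join\bar f\bigr),
\end{equation*}
in which the closed star $\link{f}{K_\Gamma}\join\bar f$ of the edge $f$ meets $K_{\Gamma-F}$ exactly in $\Sigma_f=\link{f}{K_\Gamma}\join\partial\bar f$. For each $f\in F$, since $T_f$ is a tree it deformation retracts onto the unique embedded arc $\alpha_f$ between its two designated vertices, and $\alpha_f$ collapses onto $\bar f$ by sending endpoints to $\{\init f, \term f\}$ in the prescribed way; this produces a homotopy equivalence $T_f\to\bar f$. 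Joining with the identity on $\link{f}{K_\Gamma}$ gives a homotopy equivalence
\begin{equation*}
\Psi_f\co \link{f}{K_\Gamma}\join T_f\;\longrightarrow\;\link{f}{K_\Gamma}\join\bar f
\end{equation*}
which, under the identifications above, restricts to the identity on $\Sigma_f$.

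Assembling the $\Psi_f$'s with the identity on $K_{\Gamma-F}$ yields a map of the two pushout diagrams in which all attaching inclusions are subcomplex cofibrations and the vertical arrows are homotopy equivalences compatible on the common subcomplex $\bigsqcup_f\Sigma_f$. The standard gluing lemma for homotopy equivalence of pushouts along cofibrations, applied iteratively over $F$, then yields $\uplink{\ast}{X_\Gamma(F,d)}\simeq K_\Gamma$. The identical argument with $T_f$ replaced by the descending tree $\dnlink{\ast}{\PP{d(f)}}$ delivers $\dnlink{\ast}{X_\Gamma(F,d)}\simeq K_\Gamma$.

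The main obstacle is bookkeeping the iterated pushout: one must verify that for distinct $e,f\in F$ the attaching locus $\Sigma_e$ interacts with the new piece $\link{f}{K_\Gamma}\join T_f$ only through $K_{\Gamma-F}$, so that the gluings can be performed independently and the gluing lemma applies inductively. This follows from the admissibility hypothesis $\bar e\cap\link{f}{K_\Gamma}=\varnothing$ for $e\neq f$ in $F$, which keeps the interior of each glued-in piece disjoint from the attaching locus of the others.
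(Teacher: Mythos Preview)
Your argument is correct and follows essentially the same route as the paper: both identify the ascending/descending link as $K_{\Gamma-F}$ with, for each $f\in F$, the piece $\link{f}{K_\Gamma}\join T_f$ glued along $\Sigma_f$, and both use that the tree $T_f$ deformation retracts onto the arc between its two marked vertices so that the join collapses to $\link{f}{K_\Gamma}\join\bar f=\st{f}{K_\Gamma}$. The only difference is packaging: the paper phrases this as a strong deformation retraction of $\link{f}{K_\Gamma}\join T_f$ onto $\link{f}{K_\Gamma}\join[0,1]$ (which directly extends over the rest of the complex since $\Sigma_f$ is fixed throughout), whereas you set it up as a map of pushout diagrams and invoke the gluing lemma for homotopy equivalences along cofibrations; your bookkeeping of admissibility to justify the iterated pushout is a welcome addition that the paper leaves implicit.
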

\begin{proof} For the purpose of the proof we keep the notation from the previous paragraph, and recall that the ascending (and descending) link of $h$ is isomorphic to the simplicial complex $K_\Gamma$ -- the flag complex generated by the graph $\Gamma$.

To obtain the ascending link of $X_\Gamma(F,d)$ from that of $X_\Gamma$, for each $f\in F$ the sub-complex $\st{f}{K_\Gamma}=\link{f}{K_\Gamma}\join[0,1]$ of $K_\Gamma$ is replaced by the complex $\link{f}{K_\Gamma}\join\dnlink{\ast}{\PP{d(f)}}$, with all vertices of $\link{f}{K_\Gamma}$ staying put, while two of the vertices of the tree $T^+=\dnlink{\ast}{\PP{d(f)}}$ -- call them $\alpha$ and $\beta$ -- are glued to the vertices $\init f^+$ and $\term f^+$. The same (though with a different tree $T^-$) happens with the ascending links. If now $g$ is a strong deformation retraction of $T^+$ onto the geodesic path in $T^+$ joining $\alpha$ to $\beta$, then $g$ extends to a strong deformation retraction of $\link{f}{K_\Gamma}\join T^+$ onto $\link{f}{K_\Gamma}\join[0,1]$, as desired.
\end{proof}
Applying the tools of \cite{[Bestvina-Brady]}, we obtain the following result:
\begin{corollary}\label{kernels finitely presented} Let $\widetilde{h}$ be as defined in the preceding lemma. If the complex $K_\Gamma$ is simply connected, then $\ker\widetilde{h}$ is finitely presented.
\end{corollary}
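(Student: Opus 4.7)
The plan is short: combine Lemma~\ref{same topology of links} with the standard Morse-theoretic machinery of Bestvina--Brady \cite{[Bestvina-Brady]}.

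Recall that Bestvina--Brady prove the following: if $Y$ is a connected, non-positively curved (piecewise Euclidean) cube complex, $\widetilde Y\to Y$ its universal cover, and $\widetilde h:\widetilde Y\to\RR$ an equivariant Morse function (with respect to an epimorphism of $\pi_1(Y)$ onto $\ZZ$) whose ascending and descending vertex links are all $(n-1)$-connected, then the kernel subgroup is of type $F_n$. For us, the relevant case is $n=2$: $(n-1)$-connected means simply connected, and type $F_2$ means finitely presented.

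To apply this, I would first check the hypotheses are in place. The corollary above (non-positive curvature of $X_\Gamma(F,d)$) shows that $\widetilde X_\Gamma(F,d)$ is a CAT(0) cube complex on which $A_\Gamma(F,d)$ acts properly, cocompactly, and by cubical isometries; the function $\widetilde h$ is by construction equivariant with respect to the homomorphism $A_\Gamma(F,d)\twoheadrightarrow\ZZ$ sending each $v\in V\Gamma$, each $a_{f,i}$, and each $s_f$ to $1$. Since $X_\Gamma(F,d)$ has a single vertex $(\ast)$, there is only one orbit of vertex links to consider.

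Next, by Lemma~\ref{same topology of links} both the ascending link $\uplink{\ast}{X_\Gamma(F,d)}$ and the descending link $\dnlink{\ast}{X_\Gamma(F,d)}$ are homotopy equivalent to $K_\Gamma$. The hypothesis that $K_\Gamma$ is simply connected therefore implies that these two links are themselves simply connected. Invoking the Bestvina--Brady criterion with $n=2$ gives that $\ker\widetilde h$ is finitely presented, completing the proof.

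The only potential obstacle is bookkeeping: one must verify that Bestvina--Brady's Morse-theoretic setup applies verbatim to $\widetilde X_\Gamma(F,d)$ even though the perturbation replaces certain squares with the more complicated cubical complexes $\PP{d(f)}$. This is routine, however, because the gluing maps $\varphi_f$ were defined to match positive vertices of $S(\Sigma_f)$ with positive vertices of $\link{\ast}{Y_f}$, and negative with negative. Consequently, the local Morse data on $\widetilde X_\Gamma(F,d)$ agree with those on $\widetilde X_\Gamma$ and on each $\widetilde Y_f$ on the overlap, the global Morse function $\widetilde h$ restricts correctly to each piece, and the ascending/descending links really are computed in the standard way. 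With that verification, the invocation of \cite{[Bestvina-Brady]} is immediate.
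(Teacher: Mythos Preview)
Your proposal is correct and follows exactly the paper's approach: the paper simply states that the corollary follows by ``applying the tools of \cite{[Bestvina-Brady]}'' after Lemma~\ref{same topology of links}, and you have spelled out precisely that deduction. There is nothing to add.
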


\section{Upper Bound on the Dehn Function}\label{section:upper bounds} We first recall some facts regarding the proper tools for the computation of filling invariants.
\subsection{Admissible maps} We recall the definition of admissible maps and Dehn functions of groups from \cite{[SnowFlake]}. If $W$ is a compact $k-$dimensional manifold and $X$ a CW complex, an \textit{admissible map} is a continuous map $f:W\to X^{(k)}$ such that $f\inv (X^{(k)} - X^{(k-1)})$ is a disjoint union of open $k-$dimensional balls, each mapped by $f$ homeomorphically onto a $k-$cell of $X$.

If $\dim W=2$ and $f$ is admissible, we define the \textit{area} of $f$, denoted $\area{f}$, to be the number of disks in $W$ mapping to $2-$cells of $X$. This notion is useful due to the abundance of admissible maps:

\begin{lemma}[\cite{[SnowFlake]}, Lemma 2.3]\label{admissible}
Let $W$ be a compact manifold (smooth or piecewise-linear) of dimension $k$ and let $X$ be a CW complex. Then any continuous map $f:W\to X$ is homotopic to an admissible map. If $f(\partial W) \subset X^{(k-1)}$ then the homotopy may be taken rel $\partial W$.
\end{lemma}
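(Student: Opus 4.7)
The plan is to proceed in three steps. First I would apply the cellular approximation theorem: since $W$ is a compact $k$-manifold it has the homotopy type of a $k$-dimensional CW complex, so $f$ is homotopic to a cellular map $f_1\co W\to X^{(k)}$. The relative form of the theorem produces the homotopy rel $\partial W$ in case $f(\partial W)\subset X^{(k-1)}$, since then $f|_{\partial W}$ is already cellular as a map of a $(k-1)$-manifold into $X^{(k-1)}$.

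Second, for each open $k$-cell $c$ of $X$ meeting $f_1(W)$ I would pick an interior point $p_c$, identified with the origin via the characteristic map $\chi_c\co D^k\to X$. Applying PL or smooth transversality (according to the category of $W$) near $f_1\inv(p_c)$, one homotopes $f_1$ by an arbitrarily small amount, supported away from $\partial W$, to a map $f_2$ transverse to each chosen $p_c$. By dimension count and compactness $f_2\inv(p_c)$ is then a finite set $\{w^c_1,\ldots,w^c_{m(c)}\}$ of interior points, and near each $w^c_j$ the map $f_2$ is a local homeomorphism onto a neighborhood of $p_c$. Since $f_1(W)$ meets only finitely many cells, this is a finite adjustment.

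Third, at each $w^c_j$ select disjoint small open balls $U^c_j\subset\mathrm{int}(W)$ with $f_2|_{U^c_j}$ a homeomorphism onto a fixed small open ball $V_c\subset c$ about $p_c$. Using the radial coordinates on $c$ transported by $\chi_c$ together with a collar of $\partial U^c_j$ in $W$, stretch $f_2|_{U^c_j}$ through a supported homotopy so that it becomes a homeomorphism onto all of $c$; the collar absorbs the expansion. Simultaneously, every point of $W\setminus\bigsqcup_{c,j}U^c_j$ whose image still lies in $c$ must avoid $V_c$ and so sits in $\bar c\setminus V_c$, which deformation retracts onto $\bar c\cap X^{(k-1)}$; composing with this retraction removes the excess from $\mathrm{int}(c)$. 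Performing this simultaneously for all cells produces an admissible $f_3\simeq f$ whose defining homotopies are supported in $\mathrm{int}(W)$, so the relative case is handled automatically.

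The principal obstacle is this third step: coordinating the local radial stretchings with the excess-pushing retractions so that the combined homotopy is continuous and the resulting map is admissible on every $k$-cell at once. In the PL category this is arranged by fixing a triangulation of $W$ refining $f_2\inv$ of the cell structure of $X$ and choosing the balls $U^c_j$, collars, and retractions subordinate to this triangulation; in the smooth category a suitable partition of unity subordinate to a cover adapted to the $U^c_j$ serves the same role.
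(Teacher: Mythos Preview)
The paper does not prove this lemma; it is quoted verbatim from \cite{[SnowFlake]}, Lemma~2.3, and used as a black box. So there is no in-paper argument to compare your proposal against.

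Your outline is the standard one and is essentially correct. One genuine slip occurs in Step~1: you assert that $f|_{\partial W}$ is ``already cellular'' merely because its image lies in $X^{(k-1)}$. That is false---landing in the $(k-1)$-skeleton does not mean the map respects lower skeleta, which is what cellularity requires. What you actually need, and what is true, is weaker: $f$ can be homotoped rel $\partial W$ to a map with image in $X^{(k)}$. This follows from the compression lemma, since $\pi_i(X,X^{(k)})=0$ for $i\le k$, the pair $(W,\partial W)$ is a relative CW pair of relative dimension $\le k$, and $f(\partial W)\subset X^{(k-1)}\subset X^{(k)}$. With this correction Step~1 delivers exactly the $f_1$ you want.

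Steps~2 and~3 are fine. For the coordination issue you flag in Step~3, note that it resolves cleanly if you use the \emph{same} radial coordinate (transported by $\chi_c$) for both the stretching on $\bar U^c_j$ and the deformation retraction of $\bar c\setminus V_c$ onto $\chi_c(\partial D^k)$: at each time $t$ both homotopies send $\partial U^c_j$ to the sphere of radius $(1-t)r_{V_c}+t$ in the $D^k$-model of $c$, so they glue continuously along $\partial U^c_j$ throughout. No partition of unity or subordinate triangulation is really needed beyond having chosen the $U^c_j$ disjoint.
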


Given a finitely presented group $G$, fix a $K(G,1)$ space $X$ with finite $2-$skeleton. Let $\widetilde{X}$ be the universal cover of $X$. If $f:\fat{S}^1\to\widetilde{X}$ is an admissible map, define the \textit{filling area} of $f$ to be the minimal area of an admissible extension of $f$ to $\fat{B}^2$:
\begin{displaymath}
	\text{FArea}(f) =\min\left\{\area{g}\,\bigg|\, g:\fat{B}^2\to\widetilde{X}\,,\; g\res{\partial \fat{B}^2} =f \right\}.
\end{displaymath}

Note that extensions exist since $\widetilde{X}$ is simply connected, and any extension can be made admissible by the lemma \ref{admissible}. The Dehn function of $X$ is defined to be
\begin{displaymath}
	\delta (n) = \sup\left\{
		\text{FArea}(f)\,\bigg|\, f:\fat{S}^1\to\widetilde{X}\,,\; \area{f}\leq n
	\right\}.
\end{displaymath}
Again, the maps $f$ are assumed to be admissible.

\subsection{Dimpling and Pushing} From now on, assume $\Gamma$ is an oriented graph and $d$ is an admissible marking. We denote $\displaystyle d_{max}=\max_{f\in F}d(f)$. To simplify notation, set $\widetilde{X}=\widetilde{X}_\Gamma(F,d)$.

Begin by refining the cell structure on $\widetilde{X}$ using $\zl$ to subdivide all cells intersecting this level set. Set this to be the cell structure on $\tilde X$ from now on. 

As a result, a vertical square $\sigma^{(2)}\in\widetilde{X}^{(2)}$ with $\widetilde{h} (\sigma^{(2)}) =[-1,1]$ is expressed as a union of two isosceles right-angled triangles in $\widetilde{X}^{(2)}$, with common diagonal in $\zl$. Thus, $\widetilde{X}^{(2)} \setminus \zl$ consists of two types of vertical $2-$cells: squares and triangles.

Let $N(v)$ denote an open ball of radius $\frac{1}{4}$ about $v \in\widetilde{X}^{(0)}$. Consider the closed subspace $\widetilde{Y}$ obtained from $\widetilde{X}$ by removing all $N(v)$ for $v\notin\zl$.

We endow $\widetilde{Y}$ with the cellular structure inherited from $\widetilde{X}$: every cube in $\widetilde{X}$ undergoes truncation of all the relevant vertices, with a corner of an $i$-dimensional cube replaced by an $i-1$ spherical simplex. As a consequence, $\widetilde{Y}$ has two types of cells: {\it link-cells} coming from links of vertices and \textit{truncated cells} coming from the cells of $\widetilde{X}$. Since every link-cell $\sigma$ is associated with a unique vertex $v$ of $\widetilde{X}$, it will be convenient to abuse notation and say that $\sigma$ has height $\widetilde{h}(\sigma)=\widetilde{h}(v)$. Finally, let $\widetilde{Z}=\widetilde{Y}^{(2)}$.

Since $\widetilde{X}$ is CAT(0) there exists a constant $A>0$ such that for all $n\geq 0$ and every edge-loop $\ell$ of length at most $n$ in $\zl$, there exists a Van-Kampen diagram $V_\ell$ of area at most $An^2$ and of diameter at most $An$ for the corresponding trivial word in $\ker(\widetilde{h})$. The diagram $V_\ell$ defines a cellular filling $F_\ell:V_\ell\to\widetilde{X}^{(2)}$ of $\ell$.

Our goal is to produce an admissible filling of $\ell$ in $\zl$ of controlled area. We require the following ingredients:
\begin{enumerate}
	\item {\bf Dimpling Procedure: } Let $V_0$ be the set of all $v\in V_\ell^{(0)}$ with $h(F_\ell(v))\neq 0$. Consider $v\in V_0$. Let $n_v$ be the valence of $v$ in $V_\ell^{(1)}$. Let $N(v)$ be a disk neighbourhood of $v$ satisfying $F_\ell\left(N(v)\right)\subset N\left(F_\ell(v)\right)$ and $F_\ell\left(\bd N(v)\right)\subset\bd N(F_\ell(v))$. Thus, the choice of $N(v)$ produces an induced map of the cycle $\bd N(v)$ into $\link{F_\ell(v)}{\widetilde{X}}$.
	
	Let $V_\ell^\ast$ be the result of removing all the $N(v)$, $v\in V_0$, from $V_\ell$. Let $F_\ell^\ast$ be the restriction of $F_\ell$ to $V_\ell^\ast$.
	
	The plan is to construct an admissible extension $G_\ell$ of $F_\ell^\ast$ to $V_\ell$ -- possibly subdivided -- such that for every $v\in V_0$, $G_\ell(N(v))\subset\widetilde{Z}\cap N\left(F_\ell(v)\right)$, and such that the $\area{G_\ell\res{N(v)}}\leq B\cdot n_v$, where $B$ is a constant depending only on $\widetilde{X}$.
	\item {\bf Pushing Map: } We construct a continuous map $\hsm{P}:\widetilde{Z}\to\zl$ satisfying the following properties:
	\begin{itemize}
		\item[(P0)] $\hsm{P}$ fixes $\zl$ pointwise,
		\item[(P1)] $\hsm{P}$ maps truncated cells to vertices and/or edge-paths.
		\item[(P2)] There is a constant $C>0$ such that for every $H\in\ZZ$, $\hsm{P}$ maps link $2$-cells at height $H$ to unions of at most $C|H|^{d_{max}+1}$ triangles of $\zl$ whenever $d_{max}<\infty$, and at most $C^{|H|+1}$ if $d=\infty$.
	\end{itemize}
\end{enumerate}

\begin{figure}[ht]
    \includegraphics[width=1.0\textwidth]{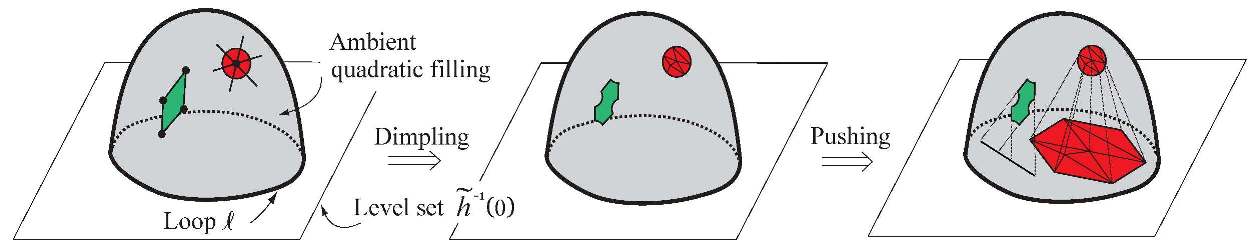}
    \caption{\tiny Trucated 2-cell (green, left) collapse under pushing, while link 2-cells (red, right) blow up.\normalsize\label{fig:dimpling and pushing}}
\end{figure}

Assume we have both of the above and consider $\hsm{P}\circ G_\ell$. This is an admissible filling map for the edge-loop $\ell$, and we may compute the area of this filling:
\begin{itemize}
	\item[-] Looking at $V_\ell$ we have the inequality:
	\begin{displaymath}
		\sum_{v\in V_\ell^{(0)}}n_v=2\left|V_\ell^{(1)}\right|=\sum_{f\in V_\ell^{(2)}}n_f+n\leq 4\left|V_\ell^{(2)}\right|+n\,,
	\end{displaymath}
	where $n_f$ is the number of edges bounding a face. Thus, removing the boundary vertices from the count results in:
	\begin{displaymath}
		\sum_{v\in V_0}n_v\leq 4\area{F_\ell}\leq 4An^2\,.
	\end{displaymath}
	\item[-] Hence the number of edges of $V_\ell^\ast$ is bounded by the number of $2-$cells of $V(\ell)$ up to multiplication by a constant.
	\item[-] Then the area of $\hsm{P}\circ G_\ell$ is estimated as follows:
	\begin{eqnarray*}
		\displaystyle
		\area{\hsm{P}\circ G_\ell}
			&	=	& \area{\hsm{P}\circ G_\ell\res{V_\ell^\ast}}+\sum_{v\in V_0}\area{\hsm{P}\circ G_\ell\res{N_v}} \\
			&	\leq	&	0+\sum_{v\in V_0}\area{G_\ell\res{N(v)}}C\left|\widetilde{h}(F_\ell(v))\right|^{d+1} \\
			&	\leq	&	\sum_{v\in V_0} Bn_v\cdot Cn^{d+1}\\
			&	\leq	&	BC n^{d+1}\sum_{v\in V_0} n_v\\
			&	\leq	& 4ABC n^{d+3}\,.
	\end{eqnarray*}
	for $d_{max}<\infty$ and similarly for $d=\infty$, to produce an exponential bound.
\end{itemize}
Thus, up to constructing dimpling and pushing for our particular space $\widetilde{X}$, we have proved:
\begin{theorem}[Main theorem, upper bound]\label{thm:upper bound} Let $\Gamma$ be the $1$-skeleton of a simply-connected simplicial flag complex $K$, and let $d:E\Gamma\to\NNhat$ be an admissible marking. Then the kernel $\ker_\Gamma(F,d)$ of the height function $\widetilde{h}:A_\Gamma(F,d)\to\ZZ$ satisfies a polynomial isoperimetric inequality of degree $(d_{max}+3)$ when $d_{max}<\infty$, and an exponential one when $d_{max}=\infty$.\ep
\end{theorem}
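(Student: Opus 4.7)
The computation of $\area{\hsm{P}\circ G_\ell}$ laid out preceding the theorem already reduces everything to two constructions on the cubical complex $\widetilde{X}=\widetilde{X}_\Gamma(F,d)$: a dimpling procedure producing $G_\ell$ from $F_\ell$, and a pushing map $\hsm{P}:\widetilde{Z}\to\zl$ with properties (P0)--(P2). My plan is therefore to carry out these two constructions and then simply invoke the bookkeeping already done.

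\textbf{Dimpling.} For $v\in V_0$, the map $F_\ell$ carries a small neighbourhood $N(v)$ into a neighbourhood of the vertex $F_\ell(v)$ of $\widetilde{X}$. The boundary $\bd N(v)$ traces out a loop $\gamma_v$ in $\link{F_\ell(v)}{\widetilde{X}}$ consisting of $n_v$ edges. Since each link is flag and $\widetilde{X}$ is CAT(0), $\gamma_v$ is null-homotopic in the flag link and bounds a simplicial disk of bounded combinatorial area --- bounded by some constant $B$ times $n_v$, since the links that arise are finite in number up to isomorphism. Turning this simplicial disk into a sequence of truncated cells in $\widetilde{Z}\cap N(F_\ell(v))$ gives the required admissible extension $G_\ell\res{N(v)}$. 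This part is purely local and essentially identical to the procedure used in \cite{[ABDDY]}; the only technicality is to check that edge truncations respect the projection to $\widetilde{Z}$, which follows from the cell subdivision along $\zl$.

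\textbf{Pushing.} This is the main obstacle and the reason Section~2 was built up so carefully. I propose to define $\hsm{P}$ vertex-by-vertex in the height direction, sending every link 2-cell at height $H>0$ to an appropriate fan (and symmetrically for $H<0$). Fix a link 2-cell $\sigma$ at height $H$; its boundary in $\widetilde{Z}$ consists of horizontal arcs together with (truncated) vertical arcs. By (P1) every truncated vertical arc collapses under $\hsm{P}$ to a single vertex of $\zl$, so only the horizontal boundary survives, giving a closed edge-path at height $0$ in $\zl$. If $\sigma$ lies in one of the attached perturbing blocks $Y_f$, its horizontal boundary is of the form $u v\inv$ (or $u\inv v$) for ascending (resp.\ descending) words $u,v$ of height $|H|$ in $V\Psi_{d(f)}$; I then declare $\hsm{P}(\sigma)$ to be the image in $\zl$ of the diagram $\mathrm{Fan}(u,v)$ (resp.\ the ascending analogue), which embeds in $\widetilde{X}$ by Proposition~\ref{fans are embedded}. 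If $\sigma$ lies in the unperturbed part $\widetilde{X}_{\Gamma-F}$, the two boundary arcs are equal as ascending words in commuting letters, so a standard cubical $2$--flat gives a filling of linear area in $|H|$. The content of property (P2) is then exactly Proposition~\ref{cor:polynomial growth of rims} together with Proposition~\ref{lemma:ascending fans are polynomial} in the polynomial case, and Proposition~\ref{prop:exponential} in the exponential case; Remark~\ref{remark:exp upper bound} furnishes the uniform constant $C$.

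\textbf{Consistency and conclusion.} To make $\hsm{P}$ well-defined and continuous I need the fan prescriptions on adjacent 2-cells to agree along their common boundary. This follows from two facts: distinct fans with a common boundary arc glue along that arc because both are the unique fan over the same pair of ascending/descending words (the uniqueness statement just before Proposition~\ref{fans are embedded}), and fans glue continuously to the horizontal ``flat'' fillings in $\widetilde{X}_{\Gamma-F}$ because the gluing maps $\Phi_f$ match positive vertices to positive vertices and negative to negative (the observation preceding Lemma~\ref{same topology of links}). Once $\hsm{P}$ is defined and the area bound (P2) is checked on each orbit of 2-cells, the displayed inequality chain immediately preceding the theorem gives $\area{\hsm{P}\circ G_\ell}\leq 4ABC\,n^{d_{max}+3}$ in the polynomial case and the analogous $O(e^n)$ bound in the exponential case, completing the proof.
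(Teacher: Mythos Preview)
Your overall strategy is exactly the paper's: reduce to dimpling and pushing, then invoke the area calculation already displayed before the theorem. The dimpling sketch is fine. The gap is in your treatment of link $2$-cells under $\hsm{P}$.

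A link $2$-cell $\sigma$ at height $n$ is a spherical $2$-simplex with \emph{three} vertices $u,v,w$ labeled by generators $x,y,z$; its boundary consists of three link $1$-cells, not ``horizontal arcs together with truncated vertical arcs.'' Because $\link{\ast}{\PP{d(f)}}$ is $1$-dimensional, a link $2$-cell can have at most two of its labels in $P_f$; there is always a third label $x\in V\Gamma$ commuting with the other two. Your dichotomy ``$\sigma$ lies in a perturbing block'' versus ``$\sigma$ lies in the unperturbed part'' misses this, and your description of $\hsm{P}\bd\sigma$ as a bigon $uv\inv$ is not what actually happens: $\hsm{P}\bd\sigma$ is a triangle in $\zl$ whose three sides are the images of the three link $1$-cells.

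More importantly, the sentence ``declare $\hsm{P}(\sigma)$ to be the image in $\zl$ of the diagram $\mathrm{Fan}(u,v)$'' does not define anything: $\mathrm{Fan}(y^n,z^n)$ is a \emph{vertical} disk spanning heights $0$ through $n$, and you have not said how to map it into $\zl$. The paper's key mechanism here is precisely what you omit: the third, commuting label $x$ furnishes a shear $p\mapsto p\cdot x^{-\widetilde{h}(p)}$ which carries the vertical fan isometrically onto a horizontal disk in $\zl$ whose boundary is $\hsm{P}\bd\sigma$. Without this commuting generator the projection has no definition, and the area bound (P2) has no content. (Relatedly, in the fully commuting case the filling is the base of a tetrahedron in a $3$-flat, of \emph{quadratic} area in $|H|$, not linear.) Once you incorporate the three-vertex structure and the $x$-shear, your argument becomes the paper's.
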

An exponential upper bound for the general case is easily achieved through a result of Gersten and Short (\cite{[Gersten-Short]}, theorem B), using the fact that $A_\Gamma(F,d)$ is biautomatic (by the result of Niblo and Reeves \cite{[Niblo-Reeves-biautomatic]}). Our approach, however, has the advantage of producing a uniform argument for both the polynomial and the exponential cases.
\begin{prop}[Dimpling]
There exist a constant $B>0$ depending only on the graph $\Gamma$ and the marking $d$, and a subdivision $V'_\ell$ of $V_\ell$ and an extension $G_\ell:V'_\ell \to \widetilde{Z}$ of $F_\ell\res{V_\ell^\ast}$ such that $G_\ell(N(v))\subset\widetilde{Z}\cap N(F_\ell(v))$ and $\area{G_\ell\res{N(v)}}\leq B n_v$.
\end{prop}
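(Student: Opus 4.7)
The problem is local around each $v\in V_0$. Since $F_\ell$ is admissible, the restriction $F_\ell\res{\partial N(v)}$ traces a closed edge-path $c_v$ of length $n_v$ in the $1$-skeleton of the link $L_v:=\link{F_\ell(v)}{\widetilde X}$. As every vertex of $\widetilde X$ is a translate of the basepoint of $X_\Gamma(F,d)$ under the deck group, each $L_v$ is canonically isomorphic to the fixed finite simplicial complex $L:=\link{\ast}{X_\Gamma(F,d)}$, and $\widetilde Z\cap N(F_\ell(v))$ is its $2$-skeleton $L^{(2)}$ realized as a small spherical complex. Consequently, constructing $G_\ell$ on $N(v)$ reduces to producing an admissible disk filling $D_v\subset L^{(2)}$ of $c_v$ with area at most $B n_v$ for a single constant $B=B(\Gamma,d)$, and then subdividing $N(v)$ to match the combinatorics of $D_v$.

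The heart of the matter is showing that $L^{(2)}$ is simply connected. By Lemma~\ref{same topology of links}, both the ascending link $L^{-}$ and the descending link $L^{+}$ of $X_\Gamma(F,d)$ are homotopy equivalent to the (by hypothesis) simply connected $K_\Gamma$. The full link $L$ is $L^{+}\cup L^{-}$ together with ``mixed'' simplices having vertices of both signs, coming from the squares of the unchanged RAAG tori and from the squares of the perturbing complexes $\PP{d(f)}$. A Van Kampen argument applied to a cover of $L$ by open thickenings of $L^{\pm}$, exploiting that every mixed edge has one endpoint in each half, upgrades simple connectivity from $L^{\pm}$ to $L$; since $\pi_{1}$ depends only on the $2$-skeleton, this yields $\pi_{1}(L^{(2)})=0$. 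This is the principal technical point: Lemma~\ref{same topology of links} only controls the two halves individually, so one must verify that the gluing along the mixed simplices does not introduce nontrivial loops.

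Once $L^{(2)}$ is known to be finite and simply connected, a linear filling follows automatically. Fix a basepoint $v_{0}\in L^{(2)}$ and, for every vertex $w$, a shortest edge path $\gamma_{w}$ from $v_{0}$ to $w$. For any edge loop $c=e_{1}\cdots e_{n}$ in $L^{(1)}$ with intermediate vertices $w_{0},\ldots,w_{n}=w_{0}$, each triangular loop $\gamma_{w_{i-1}}e_{i}\gamma_{w_{i}}^{-1}$ has length at most $2\,\mathrm{diam}(L^{(1)})+1$ and hence bounds a disk in $L^{(2)}$ of area bounded by the constant $B_{0}:=\delta_{L^{(2)}}(2\,\mathrm{diam}(L^{(1)})+1)<\infty$; concatenating these fillings yields a filling of $c$ of area at most $B_{0}n$. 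Applied to each $c_{v}$ this produces disks $D_{v}$ of area at most $B_{0}n_{v}$; subdivide $N(v)\subset V_{\ell}$ accordingly to produce $V'_{\ell}$, set $G_{\ell}\res{N(v)}$ equal to the induced map into $L_{v}^{(2)}\subset\widetilde Z$, and set $G_{\ell}\res{V_{\ell}^{\ast}}=F_{\ell}\res{V_{\ell}^{\ast}}$. The resulting extension $G_{\ell}:V'_{\ell}\to\widetilde Z$ is admissible, satisfies $G_{\ell}(N(v))\subset\widetilde Z\cap N(F_{\ell}(v))$, and meets the area bound with $B=B_{0}$.
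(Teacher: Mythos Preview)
Your argument follows essentially the same route as the paper: reduce to filling the edge-loops $c_v$ inside the fixed link $L=\link{\ast}{X_\Gamma(F,d)}$, then cone each edge of $c_v$ to a chosen basepoint and fill the resulting ``triangles'' of length $\leq 2\,\mathrm{diam}(L^{(1)})+1$ one at a time. The paper writes this as gluing quarter-disks $G_e$, but the mechanism and the resulting bound $B=\delta_L(2\,\mathrm{diam}\,L+1)$ are identical.

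The one place you go beyond the paper is in trying to justify that $L$ is simply connected; the paper simply asserts it. Your proposed Van~Kampen argument, however, is not quite right as stated. Taking open thickenings $U^{\pm}$ of $L^{\pm}$ does cover $L$ (every simplex has a vertex of some sign), and each $U^{\pm}$ deformation retracts to the simply-connected $L^{\pm}$; but Van~Kampen needs the intersection $U^+\cap U^-$ to be path-connected, and this fails already for the unperturbed RAAG on a single edge, where $L$ is the $4$-cycle $S(\text{edge})$ and $U^+\cap U^-$ is two disjoint open arcs. So ``every mixed edge has one endpoint in each half'' guarantees coverage but not connectivity of the overlap; the argument needs more input from the specific structure of $\link{\ast}{X_\Gamma(F,d)}$ than you have used. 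This does not affect the linear-filling step itself, which matches the paper.
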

\begin{proof} Let $B=\delta(2m+1)$ where $m=\mathrm{diam}\,\link{\ast}{X}$ and $\delta$ is the Dehn function of $L=\link{\ast}{X}$. Note that each link of the form $\link{F_\ell(v)}{\widetilde{X}}$ is naturally identified with $L$ through the covering map $\widetilde{X}\to X$.

The proof follows one of the standard arguments showing that a finite simply connected complex has a linear Dehn function.

By a quarter-disk we shall mean a right-angled sector of the unit disk, triangulated so that the cone point is a vertex and the circular boundary arc is an edge of the triangulation.

Fix a base point $v_0\in L$. For every $w\in L^{(0)}$ let $p_w$ be an edge-path from $v_0$ to $w$ and let $\bar p_w$ denote the reverse path. Since $L$ is simply connected, for every directed edge $e$ in $\link{\ast}{X}$, there is a combinatorial map $G_e$ from a quarter-disk $\Delta$ to $L^{(2)}$ such that $G_e$ maps $\bd\Delta$ to the path $p_{\init e}e\bar p_{\term e}$, with the circular arc (positively oriented) mapping onto $e$. We can choose $G_e$ to have area at most $B$.

Now, for any $v\in V_0$, consider the edge-loop $\ell_v=(e_0,\ldots,e_{n_v-1})$ in $L$ defined by $F_\ell\res{\bd N(v)}$. Consider $N(v)$ as the result of gluing quarter-disks $\Delta_i$, $i=0,\ldots,n_v-1$ in cyclical order modulo $n_v$. Subdivide the $\Delta_i$ appropriately for each $\Delta_i$ to support the map $G_{e_i}$. The resulting glued map $G'_v:N(v)\to L^{(2)}$ lifts to a map $G_v:N(v)\to N(F_\ell(v))\cap\widetilde{Z}$. By construction, the area of $G_v$ is at most $Bn_v$.

$G_\ell$ is the result of gluing $F_\ell^\ast$ with the maps $G_v$, $v\in V_0$. This completes the proof.
\end{proof}
Now we construct the pushing map.
\begin{prop}[Pushing] Let $\Gamma$ be the $1$-skeleton of a finite simply-connected simplicial flag $2$-complex $K$. Let $d:E\Gamma\to\NNhat$ be an admissible marking. Then there is a map $\hsm{P}:\widetilde{Z}\to\zl$ satisfying conditions (P0)-(P2).
\end{prop}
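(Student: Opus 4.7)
The plan is to build $\hsm{P}$ cell-by-cell, using the fans of Section \ref{section:the groups S_d} to carry each link $2$-cell down to $\zl$ and collapsing truncated cells onto edge-paths. The uniqueness of fans (Proposition \ref{fans are embedded}) supplies the consistency needed to glue the local definitions into a continuous map. First, set $\hsm{P}$ to be the identity on $\zl$, which gives (P0).

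A truncated $2$-cell of $\widetilde{Z}$ arises either from a vertical square whose height interval is $[-1,1]$ (in which case the diagonal lies in $\zl$, and we collapse the truncated cell onto this diagonal), or from a vertical $2$-cell lying entirely above or below $\zl$ (in which case we collapse it onto an ascending or descending edge-path joining a corner to $\zl$, the path being chosen inside the ambient cube so that adjacent truncated cells agree on shared edges). This yields (P1). For a link $2$-cell $\sigma$ at a vertex $v$ of height $H > 0$, the cell $\sigma$ is a $2$-simplex of $\link{v}{\widetilde{X}}$ whose vertices $\alpha^{\epsilon_\alpha}, \beta^{\epsilon_\beta}, \gamma^{\epsilon_\gamma}$ record which edges at $v$ it spans. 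Each boundary link-edge of $\sigma$ sits along the top of a descending fan tower of height $H$, built inductively as in the definition of $\mathrm{Fan}(u^H, v^H)$ in Section \ref{description:poly}. Define $\hsm{P}(\sigma) \subset \zl$ to be the triangulated ``shadow'' of $\sigma$ at level $0$: the triangles of $\zl$ occurring at the bases of the three fan towers attached to the boundary edges of $\sigma$, together with the bounded number of triangles needed to close off the region bounded by the three fan rims. By Remark \ref{remark:exp upper bound}, Corollary \ref{cor:polynomial growth of rims}, and Proposition \ref{prop:exponential}, the number of triangles in $\hsm{P}(\sigma)$ is bounded by $C H^{d_{max}+1}$ in the polynomial case and $C^{H+1}$ in the exponential case. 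The symmetric construction for $H < 0$ uses ascending fans, whose area is controlled by Proposition \ref{lemma:ascending fans are polynomial} (with an analogous exponential estimate in the $d=\infty$ case), giving the same estimate in $|H|$. Together these verify (P2).

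The main obstacle is the coherence of the construction on shared link edges. Suppose $\sigma_1, \sigma_2$ are link $2$-cells meeting along a link edge $e$, either within the same vertex link or in the links of two vertices joined by a cube edge of $\widetilde{X}$. The fan towers attached to $\sigma_1$ and $\sigma_2$ along $e$ must agree. This is guaranteed by Proposition \ref{fans are embedded}: the fan $\mathrm{Fan}(u, v)$ is uniquely determined by the pair of (ascending or descending) words $(u, v)$, so the tower attached to $e$ depends only on the labels and signs carried by $e$, not on which $\sigma_i$ supplies it. A similar argument, relying on the tree structure of $\uplink{\ast}{\widetilde{X}}$ and $\dnlink{\ast}{\widetilde{X}}$ at perturbation vertices and on the standard structure of vertex links in the RAAG part, yields coherence between truncated cells and link cells along their common boundaries. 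With these compatibilities verified, $\hsm{P}$ is a well-defined continuous map satisfying (P0)--(P2).
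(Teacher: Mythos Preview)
Your proposal has the right intuition---use fans to push link cells down to the zero level---but it misses the key structural observation that makes the extension over link $2$-cells actually work, and your treatment of truncated cells and of the $0$- and $1$-skeleta is too vague to yield a well-defined continuous map.

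The paper builds $\hsm{P}$ skeleton by skeleton. On vertices, each $w$ is sent to $\ell_w\cap\zl$, where $\ell_w$ is the unique periodic vertical geodesic through $w$ whose edges all carry the same label as the edge of $\widetilde{X}$ containing $w$. This makes both endpoints of a truncated $1$-cell land on the \emph{same} point, so truncated $1$-cells collapse to points; truncated $2$-cells then collapse onto the image of their boundary, which one checks is a tree (either inside a flat, or inside a translate of $\tilPP{d(f)}$ intersected with $\zl$). Your description ``collapse onto an edge-path joining a corner to $\zl$, chosen inside the ambient cube'' does not produce a map into $\zl$ and does not explain why adjacent cells agree.

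More seriously, your picture of a link $2$-cell bounded by ``three fan towers'' is wrong. Fans are only defined for pairs of generators belonging to the same $\PP{d(f)}$; a link $1$-cell between commuting generators is pushed via a flat, not a fan. The crucial structural fact---stated explicitly in the paper's proof---is that $\link{\ast}{X_\Gamma(F,d)}$ is the union of $S(K_{\Gamma-F})$ with the joins $S(\link{f}{K_\Gamma})\join\link{\ast}{\PP{d(f)}}$, and since each $\link{\ast}{\PP{d(f)}}$ is one-dimensional, \emph{every} link $2$-cell has at least one vertex labeled by a generator $x\in V\Gamma$ commuting with the other two labels. Thus there are only two cases: either all three labels commute (and one projects a flat tetrahedron onto its base in $\zl$), or exactly two labels $y,z$ lie in some $P_f$ while $x$ commutes with all of $P_f$. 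In the latter case one takes the single fan $\mathrm{Fan}(\tau)$ for the $yz$-edge $\tau$ and uses the commutation of $x$ with $P_f$ to project that fan horizontally (via $p\mapsto p\cdot x^{-\widetilde{h}(p)}$) into $\zl$; this projection is what fills $\hsm{P}\bd\sigma$. Without this observation there is no reason the three rim-paths you describe should bound anything in $\zl$, let alone a region of controlled area.
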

\begin{proof} Observe that if $\sigma$ is a vertex or truncated $1$-cell in $\widetilde{Z}$ then there is a unique $1$-cell of $\widetilde{X}$ containing $\sigma$, labeled by a generator of $A_\Gamma(F,d)$; denote this generator by $\lambda(\sigma)$.

We construct $\hsm{P}$ inductively on the skeleta of $\widetilde{Z}$.

{\bf Defining $\hsm{P}$ on $\widetilde{Z}^{(0)}$. } Given $w\in\widetilde{Z}^{(0)}\setminus\zl$, set $\hsm{P}w=\ell_w\cap\zl$, where $\ell_w$ is the unique vertical geodesic containing $w$ and all of whose edges are labeled by the same generator as the edge of $\widetilde{X}$. Observe that $\ell_w$ is the unique vertical periodic geodesic passing through $w$.

{\bf Extending $\hsm{P}$ to $\widetilde{Z}^{(1)}$. } Let $P_f$ denote the set of generators we have used for constructing the perturbing groups $S_{d(f)}$.

Note that $\hsm{P}$ maps both endpoints of a truncated $1$-cell to the same destination. Therefore it is possible to extend $\hsm{P}$ so that it is constant along such cells.

The situation with link $1$-cells is different. Given a link $1$-cell $\sigma\in\widetilde{Z}^{(1)}$ at height $n\neq 0$ with endpoints $u,v$ labeled $a$ and $b$ respectively, there are several possibilities:
\begin{enumerate}
	\item \underline{$a$ and $b$ commute.}\; In this case, the lines $\ell_u$ and $\ell_v$ span a periodic vertical $2$-flat $F(\sigma)$, so that the geodesic joining $\hsm{P}(u)$ with $\hsm{P}(v)$ is contained in $\zl\cap F(\sigma)$. We let $\hsm{P}$ map $\sigma$ onto $[\hsm{P}u,\hsm{P}v]$ by radial projection.
	\item \underline{$a,b\in P_f$ for some $f\in F$.}\; Identify the vertices of $\widetilde{X}$ with elements of $A_\Gamma(F,d)$. Thus, $\hsm{P}u\in \ker_\Gamma(F,d)$. By proposition \ref{fans are embedded}, the fan $\mathrm{Fan}\left(a^n,b^n\right)$ embeds in $\tilPP{d(f)}$ and hence also in $\widetilde{X}$. We identify the fan with its image in $\widetilde{X}$. Denote the translate $\hsm{P}u\cdot\mathrm{Fan}\left(a^n,b^n\right)$ of this fan by $\mathrm{Fan}(\sigma)$. Observe that the sides of $\mathrm{Fan}(\sigma)$ are contained in $\ell_u$ and $\ell_v$, while the edge-rim is a reduced edge-path joining $\hsm{P}u$ with $\hsm{P}v$. We let $\hsm{P}$ map the cell $\sigma$ onto this path. Note that when $a$ and $b$ commute this coincides with the construction in the first case.
\end{enumerate}

{\bf Extension to truncated $2$-cells in $\widetilde{Z}$. }
Let $\sigma$ be a truncated $2$-cell. We claim that $\hsm{P}\bd\sigma$ is a tree (contained in the $1$-skeleton of $\zl$. This will enable extending $\hsm{P}$ continuously over such cells so that $\hsm{P}\sigma=\hsm{P}\bd\sigma$.

There are two cases to consider:
\begin{enumerate}
	\item If $\bd\sigma$ is labeled by a pair of commuting generators, then $\hsm{P}\bd\sigma$ is contained in the image under $\hsm{P}$ of the top (or bottom) link $1$-cell of $\bd\sigma$, which is a geodesic.
	\item Otherwise, all labels on $\bd\sigma$ belong to $P_f$ for some $f\in F$. In this case, the fans corresponding to the link $1$-cells of $\sigma$ are all contained in the same $\ker_\Gamma(F,d)$-translate of an isometrically embedded copy of $\tilPP{d(f)}$, whose intersection with $\zl$ is a tree. Therefore, $\hsm{P}\bd\sigma$ is contained in this tree, as claimed.
\end{enumerate}
Note that this also covers the case of truncated isosceles $2$-cells: $\hsm{P}$ maps such cells onto their intersection with $\zl$. Clearly, it can be arranged for $\hsm{P}$ to fix its image pointwise.

{\bf Extension to link $2$-cells in $\widetilde{Z}$. }  Observe that $\link{\ast}{X}$ is the union of $S(K_{\Gamma-F})$ with all the joins $S(\link{f}{K_\Gamma})\join\link{\ast}{\PP{d(f)}}$, $f\in F$. Since the $\link{\ast}{\PP{d(f)}}$ are all $1$-dimensional, every $2$-cell in $\link{\ast}{X}$ has at least one vertex corresponding to a generator from $V\Gamma$ and necessarily commuting with the generators corresponding to the other vertices.

\begin{figure}[ht]
    \includegraphics[width=\textwidth]{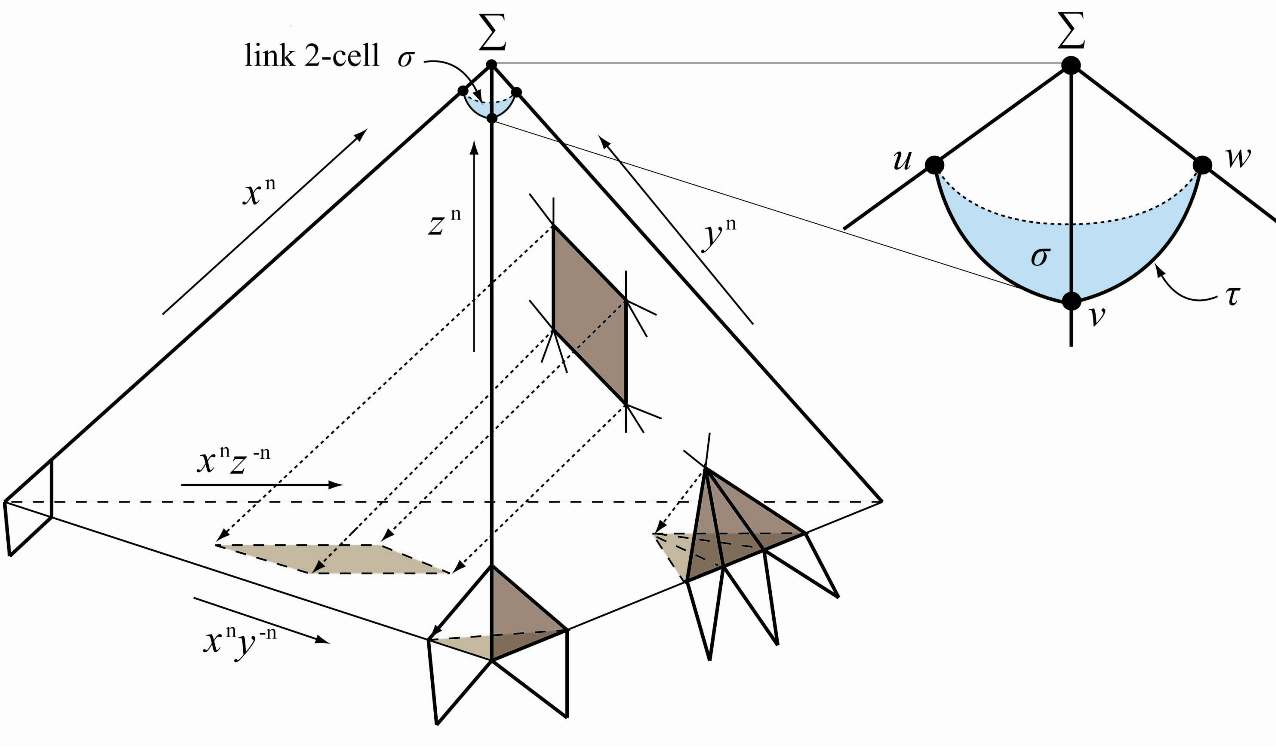}
    \caption{\tiny Link 2-cells are mapped to horizontal filling. \normalsize\label{fig:projection}}
\end{figure}

Finally, let $\sigma$ be a link $2-$cell at height $n$ having vertices $u,v,w$ with labels $x, y$ and $z$. Let $\Sigma$ be the vertex of $\widetilde{Z}$ whose link contains $\sigma$. Due to the structure of the link described above, there are two cases to consider:
\begin{enumerate}
	\item \underline{$x,y,z$ commute.}\; The lines $\ell_u$, $\ell_v$ and $\ell_w$ span an isometrically embedded $3$-flat $F$. This defines a flat tetrahedron with base $F\cap\zl$, vertex $\Sigma$, and vertical $2$-faces. The $1$-cells of $\sigma$ are mapped to the boundary of the base of the tetrahedron, so we may extend $\hsm{P}$ to $\sigma$ by projecting $\sigma$ onto the base of the tetrahedron in the obvious way, and we see that $\hsm{P}\sigma$ will have an area that is quadratic in the height $\widetilde{h}(\sigma)$ -- which is less than any of the upper bounds claimed in the statement of our proposition.
	\item \underline{$y,z\in P_f$ for some $f\in F$ and $x$ commutes with $P_f$.}\; Let $n = \widetilde{h}(\sigma)$ and let $\tau$ be the link $1$-cell of $\sigma$ joining $v$ and $w$. The vertical sub-complex $F=\mathrm{Fan}(\tau)\cap \widetilde{h}\inv([0,n])$ of $\widetilde{X}$ consists of vertical $2$-cells. Define a map $\pi:F^{(0)}\to\zl$ sending every vertex $p$ to $p\cdot x^{-\widetilde{h}(p)}$. Since $x$ commutes with every generator in $P_f$, $\pi$ extends to a piecewise-linear injective map -- denote it by $\pi$ too -- of $F$ to $\zl$. Observe that $\pi$ maps $\bd F$ homeomorphically onto $\hsm{P}\bd\sigma$ fixing $\hsm{P}\tau$ pointwise (see figure \ref{fig:projection}). Thus $\pi(F)$ is contractible and $\hsm{P}$ can be extended to an admissible map over the whole of $\sigma$. The area of $\pi(F)$ equals the area of $F$, which is polynomial in $n$ of order at most $d(f)+1\leq d_{max}+1$ when $d(f)<\infty$ (propositions \ref{cor:polynomial growth of rims} and \ref{lemma:ascending fans are polynomial}), and in general at most exponential in $n$, by remark \ref{remark:exp upper bound}.
\end{enumerate}
\end{proof}
\begin{remark} In fact, the above argument can be extended to demonstrate that $\hsm{P}$ can be extended to the whole of $\widetilde{Y}$. This could be useful in consideration of higher-dimensional Dehn functions of PRAAG kernels.
\end{remark}

\section{The Perturbed $1$-orthoplex}\label{section:orthoplex}
Here we study the perturbed $1$-orthoplex (compare with \cite{[ABDDY]}, definition 5.2) groups $A_\Delta(d)$ and $\ker_\Delta(d)$ for $1<d=d(e)\in\NNhat$ where $\Delta$ is the graph in figure \ref{fig:orthoplex} and $d\in\NNhat$ marks the edge $e$, all the other edges marked with zeros. We suppress all mention of $\Delta$ and $d$ in the notation wherever possible unless there is a risk of confusion.

\begin{figure}[t]
    \includegraphics[width=.8\textwidth]{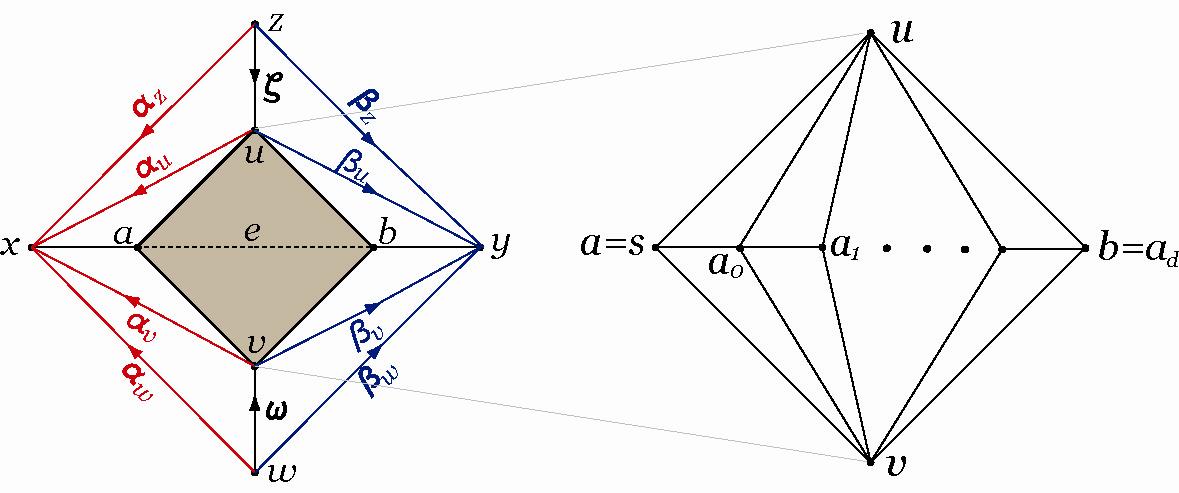}
    \caption{\tiny The perturbed $1$-orthoplex $\Delta$: all edges except $e$ in this diagram (left) represent standard generators of $\ker_\Delta(d)$ with $1<d=d(e)\leq\infty$. On the right hand side we see the finer structure of the descending link of the vertex in the perturbed complex $X_\Delta(d)$.\normalsize\label{fig:orthoplex}}
\end{figure}

\subsection{Fans revisited}
Given $n\in\NN$ consider the fan:
\begin{equation}
	F_n=\left\{\begin{array}{rl}
		\mathrm{Fan}(s^n,a_d^n)\,,& d<\infty\\
		\mathrm{Fan}(t^n,a_5^n)\,,& d=\infty
	\end{array}\right.
\end{equation}
In both cases, $F_n$ is the concatenation of `elementary' fans -- fans whose top vertex is adjacent to exactly one $2$-cell (in that fan). $F_n$ in hand, we chop off the vertices of $F_n$ at height $-1$ and the adjoining cells to produce a diagram bounded by the two ascending words and the edge-rim of $F_n$. Denote this diagram by $F'_n$.

\begin{figure}[ht]
    \includegraphics[width=.3\textwidth]{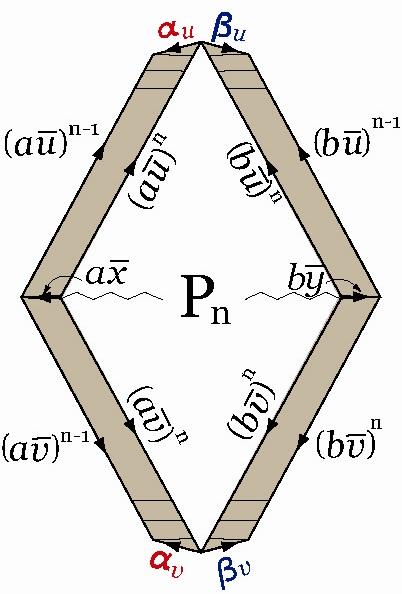}
    \caption{\tiny The diagram $P_n$ and some neighboring corridors in $\zl$.\normalsize\label{fig:pn}}
\end{figure}

Since both generators $u,v\in V\Delta$ commute with every element of $S_d$, replacing every label $\lambda$ on the edges of $F'_n$ by $\lambda\bar u$ produces an embedded diagram in $\zl$. The same holds if $\lambda$ is replaced by $\lambda \bar v$. The resulting two diagrams in $\zl$ intersect along the arc defined by the edge-rim of $F_n$, so their union is an embedded disk diagram $P_n$ in $\zl$ -- see figure \ref{fig:pn}.

An alternative way of obtaining $P_n$ uses the pushing map. For any vertex $O$ of $\widetilde{X}$, If $p,q,r$ are vertices in $L=\dnlink{O}{\widetilde{X}}$ bounding a $2$-simplex, denote this (closed) simplex by $\tri{O}{pqr}$ (we suppress the plus signs on all labels).

Let $\tri{O}{aub}$ denote the union of all triangles $\tri{O}{puq}$ where the edges $(p,q)$ run over the edges of $\dnlink{\ast}{\PP{d}}$ (see figure \ref{fig:orthoplex}, right-hand side). Define $\tri{O}{avb}$ analogously, and set $P(O)$ to be the image of $D(O)=\tri{O}{aub}\cup\tri{O}{avb}$ under pushing. Then $P(O)$ is an embedded copy of $P_n$ in $\zl$ for $n=\widetilde{h}(O)$ -- see figure \ref{fig:projection} and the discussion of the main case in the construction of the pushing map.

By pushing and by the results of section \ref{section:the groups S_d}, we now have
\begin{equation}
	\area{P_n}\asymp\left\{\begin{array}{rl}
		n^{d+3}\,,& d<\infty\\
		e^n\,,& d=\infty
	\end{array}\right.
\end{equation}
In the case $d=\infty$, note that the fan $F_n$ contains the fan $\mathrm{Fan}(a_1^{n-1},a_3^{n-1})$, and therefore has an exponential rim, as desired (see figure \ref{fig:exp_fans}).

\begin{figure}[ht]
    \includegraphics[width=.6\textwidth]{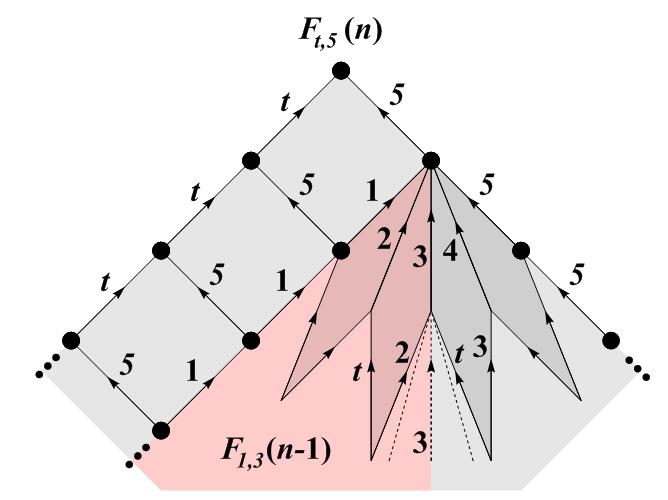}
    \caption{\tiny The fan $\mathrm{Fan}(a_1^{n-1},a_3^{n-1})$ embedded in $F_n=\mathrm{Fan}(t^n,a_5^n)$ (case $d=\infty$).\normalsize\label{fig:exp_fans}}
\end{figure}

\subsection{The exponential case}
This case is settled by looking at the diagrams $P_n$: for every $n$, the perimeter of $P_n$ equals $4n$, while the area grows exponentially.

\subsection{The polynomial case}
In this case, the diagrams $P_n$ have insufficient area for producing the desired bound. This is where the orthoplex structure becomes handy.

It will be convenient to use the notation $\alt{k}{\alpha}{\beta}$ for the alternating word of length $k$ in the letters $\alpha$ and $\beta$, $\alpha$ coming first. Clearly,
\begin{displaymath}
	\alt{k+1}{\alpha}{\beta}=\alpha\alt{k}{\beta}{\alpha}\,,\quad
	\alt{2k}{\alpha}{\beta}=(\alpha\beta)^k\,.
\end{displaymath}
Consider the loops $\ell_n\subset\zl$ (based at the origin) defined for $n=2k$, $k\in\NN$ by the words:
\begin{equation}
	\ell_n=\alt{n}{\bar\beta_z}{\bar\alpha_w}\cdot\alt{n}{\beta_w}{\alpha_z}\cdot
		\alt{n}{\bar\alpha_w}{\bar\beta_z}\cdot\alt{n}{\alpha_z}{\beta_w}\,.
\end{equation}
Figure \ref{fig:tent} demonstrates a filling of $\ell_n$ in $\widetilde{X}$ composed of four flat vertical right-angled triangles bounded (in addition to the four segments defining $\ell_n$) by the ascending words
\begin{equation}
	 \alt{n}{y}{x}\,,\quad \alt{n}{z}{w}\,,\quad
	 	 \alt{n}{x}{y}\,,\quad \alt{n}{w}{z}\,.
\end{equation}
Denote the union of these triangles by $T_n$ (we fondly refer to it as the `$n$-th tent') and let $O_n$ be the top vertex of $T_n$.

\begin{figure}[hbt]
    \includegraphics[width=.8\textwidth]{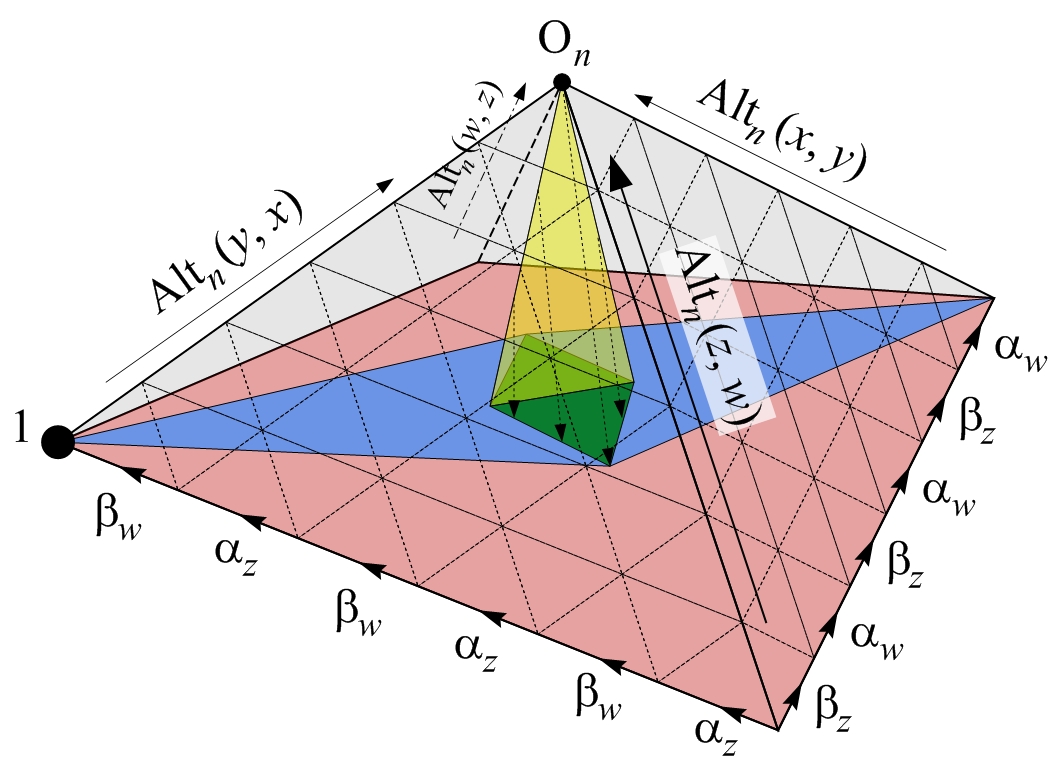}
    \caption{\tiny The tent $T_n$ with its faces subdivided (and the diagram $P(O_n)$ -- see below -- obtained by pushing (away from $O_n$) and situated in $\zl$ under the tent). Observe how the commutation relations among $x,z,y,w$ serve to form the faces of $T_n$.\normalsize\label{fig:tent}}
\end{figure}

We remark that $T_n$ is a quadratic filling of the loop $\ell_n$ in $\widetilde{X}^{(2)}$, so it is natural to ask what the result of our dimpling-and-pushing procedure from the preceding section might look like.

We shall now construct a Van-Kampen diagram $R_n$ filling the loop $\ell_n$ in $\zl$, having the property that $\area{R_n}\asymp n^{d+3}$. The rest of the section will be devoted to a proof that $R_n$ is embedded, by studying the relationship between $T_n$ and $R_n$ using the pushing map $\hsm{P}$.

Figure \ref{fig:quen} details the labeling along the boundary of $P_n$ (in the case $d<\infty$), some adjacent corridors in $\zl$ and shows how to form a diagram $Q_n$ out of copies of $P_1,\ldots,P_n$ using those corridors. As a result we obtain $\area{Q_n}\asymp n^{d+2}$.

\begin{figure}[ht]
    \includegraphics[width=.8\textwidth]{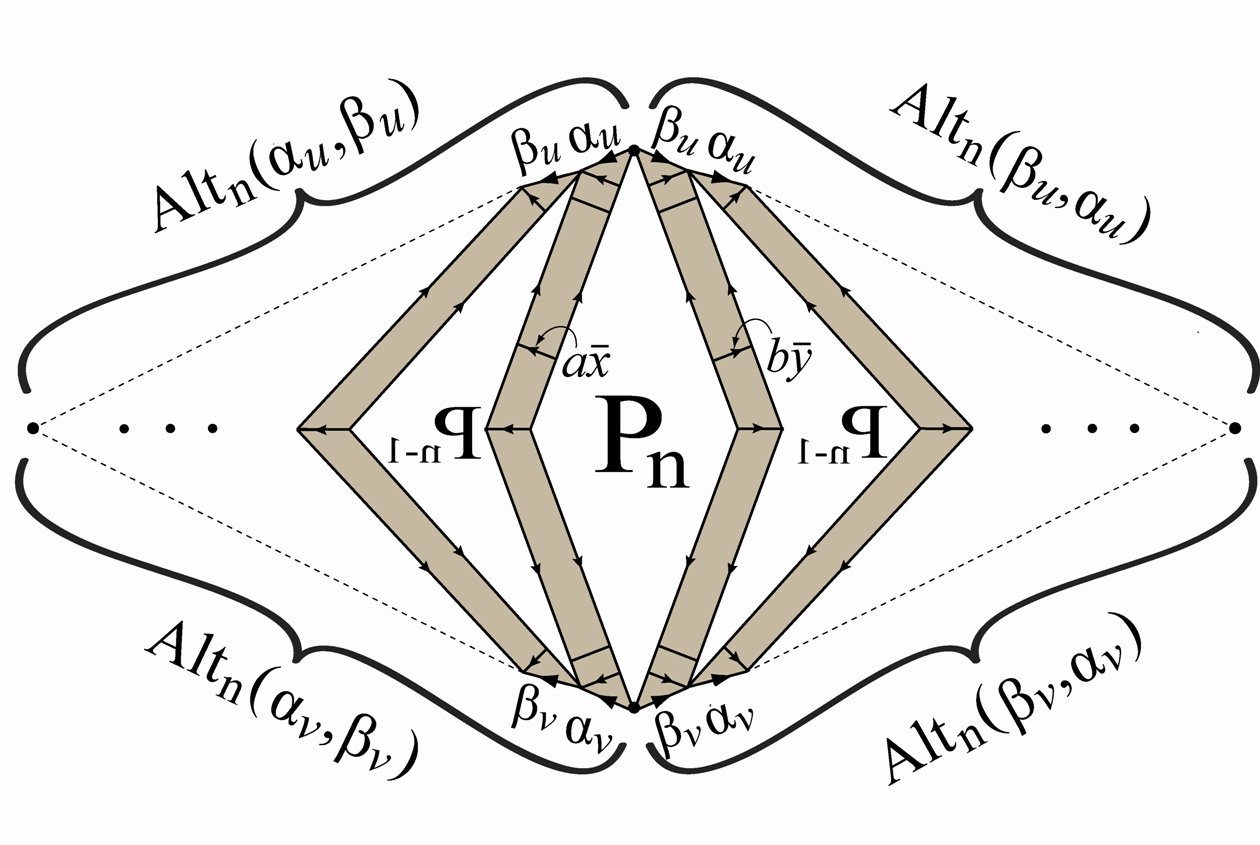}
    \caption{\tiny The diagrams $P_1,\ldots,P_n$ together with some surrounding corridors enable the construction of $Q_n$, with $\area{Q_n}\asymp n^{d+2}$.\normalsize\label{fig:quen}}
\end{figure}

The diagram $R_n$ is then constructed for {\it even} $n$ using $\zeta$- and $\omega$-corridors as shown on figure \ref{fig:rn}. As before, we have $\area{R_n}\asymp n^{d+3}$, and we see that $R_n$ defines a horizontal filling of the loop $\ell_n$ for all even $n$.

\begin{figure}[h]
    \includegraphics[width=.9\textwidth]{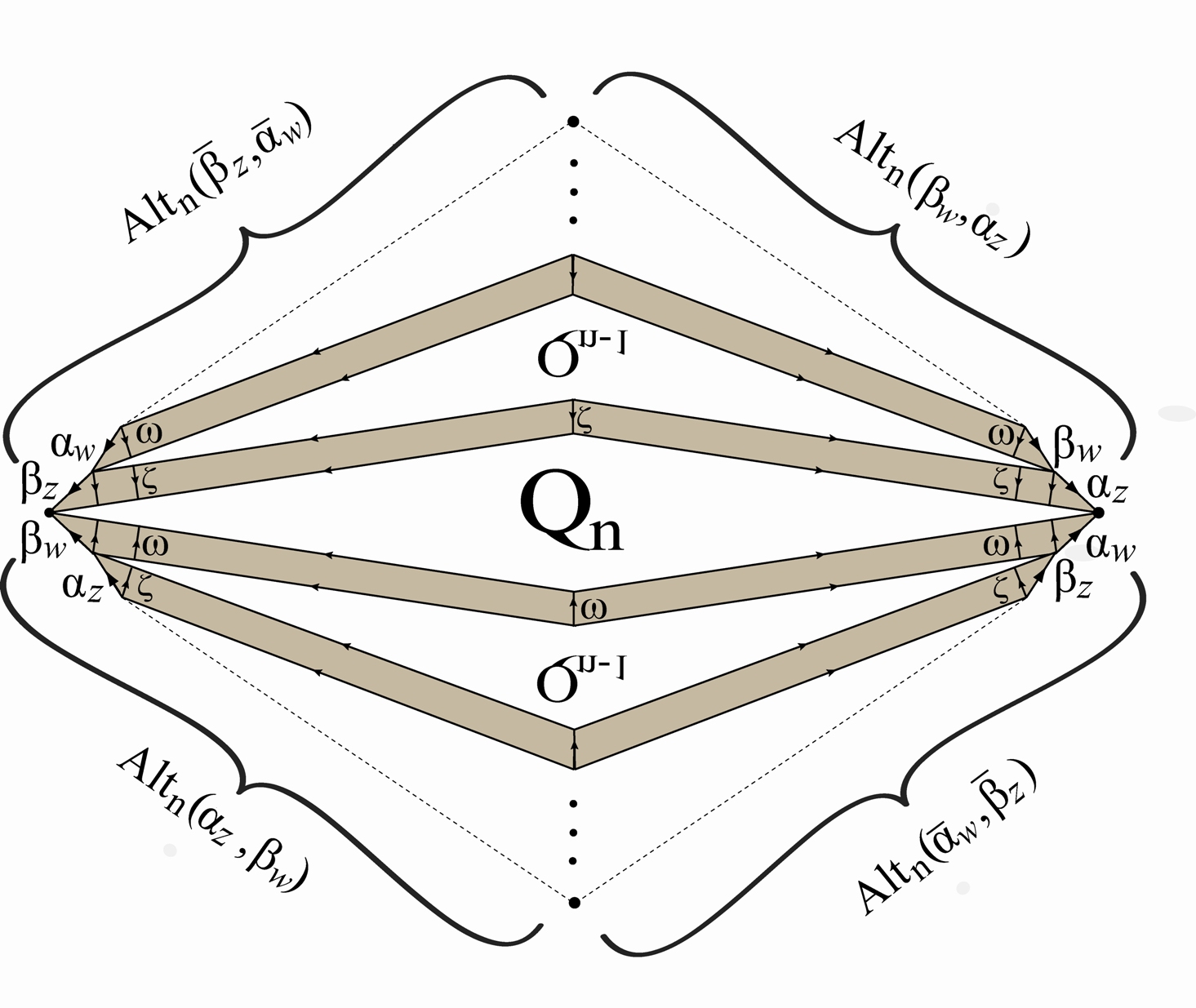}
    \caption{\tiny Gluing the $Q_n$ of various sizes into a diagram $R_n$ of perimeter $4n$ and area$\;\asymp n^{d+3}$. Note how the alternating patterns of reversing the orientation (from the construction of $Q_n$) repeats itself here, too.\normalsize\label{fig:rn}}
\end{figure}
Note that the construction of $Q_n$ and $R_n$ follows the guidelines of the construction in section 2.5.2 of \cite{[Brady-Blue-Book]}.
\begin{remark} Observe how the gluing procedure applied to $P_1,\ldots,P_n$ and then $Q_1,\ldots,Q_n$ {\it cannot} be repeated for $R_1,\ldots,R_n$ to produce a (useful) diagram of even higher area. Even forgetting about the upper bound on the Dehn function from the preceding section, observe that all four `boundary letters' are used on either side (left/right) of $R_n$, so repeating the procedure would necessitate the introduction of a new vertex into $\Delta$ corresponding to a generator which commutes with all four letters $x,y,z,w$. However, this alteration will result in forming an alternative filling of quadratic area for the loop $\ell_n$, defeating the purpose of the construction.
\end{remark}

Going back to the view of $P_n$ as the result of pushing $D(O_n)$, consider the following:
\begin{lemma} Let $O\in\widetilde{X}^{(0)}$ and $t\in\{x,y\}$. Denote $\tau=x\bar a$ if $t=x$ and $\tau=y\bar b$ if $t=y$. Let $H$ be the unique hyperplane of $\widetilde{X}$ separating $O$ from $Ot$. Then $H$ separates $P(O)$ from $P(Ot)$ and, moreover, $H$ intersects $\zl$ bisecting a $\tau$-corridor joining $P(O)$ to $P(Ot)$.
\end{lemma}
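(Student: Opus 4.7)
The plan is to handle the two clauses in turn: first the separation $P(O)\subset \overline{H^-}$ and $P(Ot)\subset \overline{H^+}$, then the construction of the $\tau$-corridor bisected by $H$.

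For the separation, I would invoke the standard halfspace characterization in CAT(0) cube complexes: a vertex $w$ lies on the same side of $H$ as $O$ iff some edge-path from $O$ to $w$ avoids edges dual to $H$, equivalently avoids the label $t$ (up to squares of commuting edges). Tracing through the construction of $P(O)$ via the pushing map applied to $D(O)=\tri{O}{aub}\cup\tri{O}{avb}$, every vertex of $P(O)$ is of the form $O\cdot w$ for a word $w$ in the letters $\{u,v\}\cup P_e$, none of which equals $t\in\{x,y\}$ by the structure of the perturbed orthoplex $\Delta$. Hence no edge on such a path is dual to $H$, and $P(O)\subset \overline{H^-}$. By equivariance of $\hsm{P}$ under the deck action of $A_\Delta(F,d)$ (or by the same argument based at $Ot$), we obtain $P(Ot)\subset \overline{H^+}$.

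For the $\tau$-corridor, the key structural input is the commutation $\{t,a\}\in E\Delta$ (for $t=x$, and similarly $\{t,b\}\in E\Delta$ for $t=y$), which is forced by the orthoplex structure together with admissibility of the marking. This commutation produces an isometrically embedded 2-flat $F=\langle t,a\rangle\cdot O\cong\ZZ^2$ in $\widetilde{X}$, whose $\zl$-slice is a bi-infinite geodesic consisting entirely of $\tau = t\bar a$-edges (the horizontal diagonals of the $(t,a)$-squares), and $H\cap F$ is a vertical line that bisects each such $\tau$-edge at its midpoint.

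To promote this geodesic to a genuine 2-dimensional strip of horizontal cells in $\zl$, I would adjoin one of the pushing-commuting generators $c\in\{u,v\}$ chosen so that $\{t,c\}\in E\Delta$; such a $c$ exists by the structure of $\link{e}{K_\Delta}$. Then $F'=\langle t,a,c\rangle\cdot O\cong\ZZ^3$ is an embedded 3-flat whose $\zl$-slice is a 2-flat in $\zl$ tiled by horizontal triangles, and $H\cap F'$ meets this 2-flat in a line that bisects precisely the strip of triangles containing $\tau$-edges. This strip is the desired $\tau$-corridor.

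Finally, to see that the corridor joins $P(O)$ to $P(Ot)$, I would identify the endpoints on either side of $H$: the pushed image $\hsm{P}(a^+)$ of the descending link vertex $a^+\in D(O)$ is $Oa^{-n}$, a vertex of $P(O)\cap F$ lying in $\overline{H^-}$, while the parallel pushing at $Ot$ yields the vertex $(Ot)\cdot a^{-(n+1)}\in P(Ot)\cap F$ lying in $\overline{H^+}$. These two vertices differ by a single $\tau$-edge—namely the one bisected by $H$—which sits inside the constructed corridor.

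The principal obstacle is verifying the three commutations underlying the 3-flat $F'$: that $t$ commutes with $a$ (via $t\in\Lambda_e$ together with the product decomposition $Y_e=X_{\Lambda_e}\times\PP{d(e)}$), that $c$ commutes with $a$ (analogously), and that $t$ commutes with $c$ (an orthoplex edge condition $\{t,c\}\in E\Delta$ that must be present in the definition of the perturbed $1$-orthoplex). Granting these, the 3-flat $F'$ embeds in $\widetilde{X}(F,d)$ and the corridor argument proceeds as described.
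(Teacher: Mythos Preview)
Your separation argument is essentially identical to the paper's: both observe that every vertex of $P(O)$ is reachable from $O$ by an edge-path avoiding the label $t$, hence lies on the $O$-side of $H$.

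For the corridor, the paper takes a slightly different route. Rather than building the $3$-flat $\langle t,a,c\rangle$ and slicing, it works directly with the fans: the $a^n$-side of $\mathrm{Fan}(b^n,a^n)$ based at $O$ and the $a^{n+1}$-side of $\mathrm{Fan}(a^{n+1},b^{n+1})$ based at $Ot$ are joined by a \emph{vertical} $x$-corridor (using $[x,a]=[x,b]=1$), and then the relation $[u,x]=1$ allows one to push this vertical corridor into $\zl$ to obtain the horizontal $\tau$-corridor (and similarly with $v$ in place of $u$). Your $3$-flat argument is a clean repackaging of exactly the same commutation relations, so it is correct; the difference is mainly one of presentation. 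The paper's fan-based description has the advantage of making visible that the resulting corridor abuts the entire $a$-side of $P(O)$ and of $P(Ot)$ --- which is what is actually used when assembling the $P_m$ into $Q_n$ --- whereas your endpoint check only exhibits a single $\tau$-edge of contact. This is not a gap in the lemma as stated, but it is worth being aware of for the downstream application.
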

\begin{proof} Without loss of generality, $t=x$. By construction of $P(O)$, any vertex of $P(O)$ can be joined to $O$ by edges of $\widetilde{X}$ none of which is labeled $x$. The same holds for $P(Ox)$. Therefore, since $H$ separates $O$ from $Ox$, it also separates $P(O)$ from $P(Ox)$.

Consider the fan $Fan(b^{n}, a^{n})$ of $O$ and another fan $Fan(a^{n+1}, b^{n+1})$ of $Ox$. Since $x$ commutes with $a$ and $b$, these two fans can be joined by a vertical $x$-corridor. Moreover $u$ commute with $x$ and hence the definition of $P$ extends to this $x$-corridor to produce $(x \bar a)$-corridor in $\zl$. The same argument works for $v$. All these corridors are bisected by $H$.
\end{proof}

We proceed to enumerate the vertices of $T_n$ (recall $n$ is even!) as follows (consider the view of figure \ref{fig:tent} from above):
\begin{enumerate}
	\item Set $O_{0,0}=O_n$;
	\item For $i\geq 0$, let $O_{-i,0}=O_n\cdot\alt{i}{\bar x}{\bar y}$ and $O_{i,0}=O_n\cdot\alt{i}{\bar y}{\bar x}$;
	\item For $j\geq 0$, let $O_{i,j}=O_{i,0}\cdot\alt{j}{\bar z}{\bar w}$ and $O_{i,-j}=O_{i,0}\cdot\alt{j}{\bar w}{\bar z}$.	
\end{enumerate}
Thus the vertices of $T_n$ are enumerated as $\left\{O_{i,j}\right\}_{|i|+|j|\leq n}$. For a fixed $|j|\leq n$, let $S_j=\left\{O_{i,j}\right\}_{|i|\leq n-|j|}$.

We conclude from the last lemma that, given $1\leq m\leq n$, each of the two copies of $Q_m$ in $1\cdot R_n\subset\zl$ equals one of the unions $\displaystyle\bigcup_{O\in S_{\pm m}}P(O)$, with the addition of the appropriate corridors, with all the $P(O)$ separated from each other by hyperplanes of $\widetilde{X}$. We conclude that all the copies of $Q_1,\ldots,Q_n$ is $R_n$ are embedded. Denote the copy of $Q_m$ corresponding to $S_{\pm(n-m)}$ by $Q(O_{0,\pm(n-m)})\cong Q_m$.

The following lemma allows us to re-iterate the same reasoning and conclude that $R_n$ is embedded:
\begin{lemma} Let $O\in\widetilde{X}^{(0)}$ and $t\in\{z,w\}$. Denote $\tau=\zeta$ if $t=z$ and $\tau=\omega$ if $t=w$. Let $H$ be the unique hyperplane of $\widetilde{X}$ separating $O$ from $Ot$. Then $H$ separates $Q(O)$ from $Q(Ot)$ and, moreover, $H$ intersects $\zl$ bisecting a $\tau$-corridor joining $Q(O)$ to $Q(Ot)$.
\end{lemma}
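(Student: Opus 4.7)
The proof proceeds in precise parallel with the preceding lemma, one orthoplex-level higher. Recall that $Q(O)$ is the union of the diagrams $P(O')$ as $O'$ ranges over a set $S$ of vertices reachable from $O$ by alternating words in $x$ and $y$, together with the $x\bar{a}$- and $y\bar{b}$-corridors of $\zl$ supplied by the preceding lemma.

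The separation assertion reduces, as before, to showing that every vertex of $Q(O)$ can be joined to $O$ by an edge-path in $\widetilde{X}^{(1)}$ in which no edge carries the label $t$. Indeed, to reach an interior vertex of a constituent $P(O')$ one first travels from $O$ to $O'$ along an $\{x,y\}$-word, and then along an edge-path inside $P(O')$ whose labels are drawn from $u, v, a, b$ together with the generators of $S_{d(e)}$; the connecting $x\bar{a}$- and $y\bar{b}$-corridors contribute only edges from this same alphabet. The join-and-perturbation structure of the perturbed $1$-orthoplex $\Delta$ forces both $z$ and $w$ to commute with every letter on this list, so none of them is $t$. The identical reasoning applies to $Q(Ot)$, and since the hyperplane $H$ separating $O$ from $Ot$ is crossed only by $t$-edges, $H$ must separate $Q(O)$ from $Q(Ot)$.

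For the corridor claim, observe that since $t$ commutes with every generator labelling an edge of $Q(O)$, right-translation by $t$ is an isometry carrying $Q(O)$ onto $Q(Ot)$. The slab swept out between them is a vertical $t$-corridor in $\widetilde{X}$, bisected by $H$. Applying the pushing map $\hsm{P}$ to this corridor -- exactly as, in the preceding lemma, a vertical $x$-corridor was pushed to an $x\bar{a}$-corridor in $\zl$ -- produces a $\tau$-corridor in $\zl$ joining $Q(O)$ to $Q(Ot)$ and meeting $H\cap\zl$ along its central arc, with $\tau = \zeta$ or $\omega$ according as $t = z$ or $t = w$.

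The argument is largely formal given the preceding lemma; the only genuinely substantive step is the commutation claim underlying the separation argument. This is verified by direct inspection of $\Delta$ (see figure \ref{fig:orthoplex}): the orthoplex structure places $\{z,w\}$ in $\link{v}{K_\Delta}$ for each vertex $v$ appearing in the construction of $Q(O)$, and admissibility of the marking confines the new generators of $S_{d(e)}$ to $\link{e}{K_\Delta}$, which also contains $z$ and $w$.
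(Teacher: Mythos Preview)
Your overall approach matches the paper's (the proof there reads simply ``analogous to the proof of the last lemma''), and your separation argument is correct. The corridor argument, however, contains an error: you claim that right-translation by $t$ carries $Q(O)$ onto $Q(Ot)$, but this is false. By construction $Q(O)\cong Q_m$ with $m=\widetilde{h}(O)$, whereas $Q(Ot)\cong Q_{m+1}$ since $\widetilde{h}(Ot)=m+1$; these diagrams contain different numbers of $P$-pieces and cannot be translates of one another. Commutation of $t$ with the edge-labels of $Q(O)$ only guarantees that $Q(O)\cdot t$ is an isometric copy of $Q(O)$ sitting in $\widetilde{h}\inv(1)$, not that it coincides with $Q(Ot)\subset\zl$.

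The fix is to imitate the preceding lemma's corridor argument more literally. There one did \emph{not} translate $P(O)$ onto $P(Ox)$ either --- those too differ in size ($P_n$ versus $P_{n+1}$). Instead, one joined the $a^n$-side of the fan at $O$ to the $a^{n+1}$-side of the fan at $Ox$ by a vertical $x$-corridor (using $[x,a]=1$), and then pushed (using $[x,u]=1$) to obtain an $(x\bar a)$-corridor in $\zl$. The analogous move here is to single out the boundary arc of $Q(O)$ that abuts $Q(Ot)$ in the assembly of $R_n$ (figure~\ref{fig:rn}), use your commutation facts to attach a vertical $t$-corridor along that arc, and then push to obtain the $\tau$-corridor in $\zl$. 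Your commutation observations are exactly the right input; only the geometric packaging needs correction.
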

\begin{proof} The proof is analogous to the proof of the last lemma.
\end{proof}
To summarize the results of this section, we have:
\begin{prop}\label{embedded Rn} Let $\Gamma$ be a directed simple graph with an admissible marking $d:E\Gamma\to\NNhat$. Suppose $\Gamma$ contains a $1$-orthoplex $\Delta$ as a full subgraph and $d$ vanishes on all edges of $\Delta$ except, perhaps its interior edge $e$. Then every level set of $\widetilde{X}_\Gamma(F,d)$ contains a family $(R_n)_{n=1}^\infty$ of embedded combinatorial disks with $\left|\bd R_n\right|\asymp n$ and $\area{R_n}\asymp n^{d(e)+3}$ when $1<d<\infty$, and $\area{R_n}\asymp e^n$ for $d=\infty$.
\end{prop}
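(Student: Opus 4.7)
The plan is to realize $R_n$ as an assembly of the building blocks $P_m$ and $Q_m$ already constructed earlier in this section, and to establish embeddedness by iterating the two hyperplane-separation lemmas stated above. Since every level set of $\widetilde{h}$ is a $\ker_\Gamma(F,d)$-translate of $\zl$, it suffices to work inside $\zl$.

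First, I would take the tent $T_n$ with apex $O_n = O_{0,0}$ at height $n$ together with its vertex enumeration $\{O_{i,j}\}_{|i|+|j|\leq n}$, and attach to every $O$ the pushed diagram $P(O) = \hsm{P}(D(O))$, a translate of $P_{\widetilde{h}(O)}$ inside $\zl$. The $P(O)$ have area $\asymp m^{d(e)+1}$ in the polynomial case and $\asymp e^m$ in the exponential case (propositions \ref{cor:polynomial growth of rims} and \ref{prop:exponential}). Stacking the $P(O)$ for $O \in S_{\pm j}$ together with the horizontal $\bar u$- and $\bar v$-corridors of figure \ref{fig:quen} assembles $Q_{n-|j|}$; gluing the $Q_m$'s along the $\zeta$- and $\omega$-corridors of figure \ref{fig:rn} produces $R_n$. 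The boundary of the resulting diagram reads exactly $\ell_n$, so $|\bd R_n| = 4n$, and summing areas row-by-row yields $\area{R_n} \asymp n^{d(e)+3}$ (respectively $\asymp e^n$).

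Next, I would argue embeddedness in two stages, mirroring the order of gluing. Within each row $S_{\pm j}$, the first of the two lemmas shows that the hyperplane of $\widetilde{X}$ dual to an edge $(O, Ot)$ with $t \in \{x, y\}$ separates $P(O)$ from $P(Ot)$ inside $\zl$ and meets $\zl$ in precisely the corridor used to glue them; iterating along the row gives that each copy of $Q_m$ embeds. The second lemma performs the analogous task across rows for $t \in \{z, w\}$: consecutive $Q(O_{0,\pm j})$ are separated by the hyperplane dual to the $\zeta$- or $\omega$-edge linking their apices. Induction on $|j|$ then shows that the various $Q_m$'s inside $R_n$ are pairwise hyperplane-separated in $\zl$, so $R_n$ is embedded.

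The only real obstacle is verifying cleanly that the relevant hyperplanes miss the interiors of the pieces they are meant to separate, which reduces to a labeling check: every edge appearing in a $P(O)$ is labeled by a letter in $P_f \cup \{\bar u, \bar v\}$, and each of these commutes with $x, y, z, w$ by admissibility of $d$ and by the full-subgraph property of the $1$-orthoplex $\Delta$. Granting this label constraint, the two separation lemmas apply as stated, the inductive gluing never self-intersects, and the proposition follows; the exponential case requires no structural change beyond replacing the polynomial fan area with the exponential bound of proposition \ref{prop:exponential}.
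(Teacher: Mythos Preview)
Your proposal is correct and follows essentially the same route as the paper: assemble $R_n$ from the $Q_m$'s, themselves built from the $P(O)$'s indexed by the tent vertices $O_{i,j}$, and prove embeddedness by iterating the two hyperplane-separation lemmas first along rows (for $t\in\{x,y\}$) and then across rows (for $t\in\{z,w\}$). The only cosmetic discrepancy is your naming of the connecting corridors within a row as ``$\bar u$- and $\bar v$-corridors''; in the paper's construction these are the $x\bar a$- and $y\bar b$-corridors bisected by the relevant hyperplanes, but this does not affect the argument.
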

Together with the upper bound from the previous section we now have:
\begin{corollary}\label{Dehn fncn of the orhtoplex} Let $K=\ker_\Delta(d)$ be the kernel of the height function on the perturbed $1$-orthoplex group $A_\Delta(d)$. Then $\delta_K(n)\asymp n^{d+3}$ for $1<d<\infty$, and $\delta_K(n)\asymp e^n$ for $d=\infty$.
\end{corollary}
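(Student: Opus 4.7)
My plan is to obtain the asymptotic by sandwiching the Dehn function between the general upper bound already proved and the lower bound provided by the explicit embedded disks $R_n$.

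The upper bound is immediate: apply Theorem \ref{thm:upper bound} to $\Gamma = \Delta$ with the admissible marking $d$. Since $d$ vanishes on every edge of $\Delta$ except $e$, we have $d_{max} = d(e) = d$, and the theorem delivers $\delta_K(n) \preceq n^{d+3}$ for $1 < d < \infty$ and $\delta_K(n) \preceq e^n$ for $d = \infty$.

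For the lower bound I would use the family $\{R_n\}$ from Proposition \ref{embedded Rn}, which consists of embedded combinatorial disks in $\zl \subset \widetilde X_\Delta(d)$ with $|\partial R_n| \asymp n$ and $\area{R_n} \asymp n^{d+3}$ (resp.\ $\asymp e^n$). By Corollary \ref{kernels finitely presented}, $K$ is finitely presented, and a standard Bestvina-Brady Morse theory argument identifies a presentation $2$-complex for $K$ with the quotient $\zl/K$; hence any Van-Kampen diagram for the boundary word $\ell_n$ lifts to a combinatorial disk diagram $D \to \zl$, and the filling area of $\ell_n$ equals the minimum number of $2$-cells in any such $D$. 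The remaining task is to show that $R_n$ is, up to a multiplicative constant, area-minimizing.

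The argument for minimality is a corridor-counting estimate. The word $\ell_n$ has $4n$ letters drawn from the four generators $\alpha_z, \beta_z, \alpha_w, \beta_w$, arranged in the alternating pattern specified before figure \ref{fig:tent}. In any diagram $D$ filling $\ell_n$ each occurrence of a letter on $\partial D$ is the start of a dual hyperplane corridor in $\widetilde X_\Delta(d)$; the CAT(0) cube complex structure ensures these corridors either end on $\partial D$ or close up, and that distinct corridors dual to the same generator are separated by disjoint hyperplanes (convex separation). Tracking these corridors through the perturbation region (where they meet fans in $\tilPP{d}$), Propositions \ref{cor:polynomial growth of rims} and \ref{lemma:ascending fans are polynomial} (resp.\ Proposition \ref{prop:exponential}) force any corridor reaching height $h$ in $\widetilde X_\Delta(d)$ to consume at least $\asymp h^{d+1}$ cells (resp.\ $\asymp e^h$). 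Because $\ell_n$ encloses the tent $T_n$ whose apex is at height $\asymp n$, summing the cell counts over the $\asymp n$ nested corridors gives $\area{D} \succeq n \cdot n^{d+2}$ in the polynomial case, matching the upper bound. The exponential case is analogous, using that a single corridor reaching height $n$ already contributes exponentially many cells.

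The main obstacle I foresee is the corridor counting itself: one must rule out the possibility that commutation relations in $A_\Delta(F,d)$ are exploited inside $D$ to shorten or merge corridors dual to the perturbing generators. This is handled by exploiting the tree structure of $\uplink{\ast}{\PP{d}}$ and $\dnlink{\ast}{\PP{d}}$ together with the embedding of fans guaranteed by Proposition \ref{fans are embedded}, both of which force any vertical corridor traversing the perturbation region to carry the full length of the corresponding fan-rim. The precise bookkeeping generalizes the lower bound argument of Section 2.5.2 in \cite{[Brady-Blue-Book]} and Section 5 of \cite{[ABDDY]} to the perturbed setting.
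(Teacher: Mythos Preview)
Your upper bound is correct and matches the paper.

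Your lower-bound argument has a genuine gap. You correctly observe that a van Kampen diagram $D$ for $\ell_n$ over $K$ is a map into $\zl$, but you then speak of corridors ``reaching height $h$ in $\widetilde X_\Delta(d)$'' and of $\ell_n$ ``enclosing the tent $T_n$ whose apex is at height $\asymp n$.'' This conflates two different spaces: $D$ lands entirely in the level set $\zl$, which sits at height zero, so no cell of $D$ has nonzero $\widetilde h$-value and the tent $T_n\subset\widetilde X$ is irrelevant to bounding $\area{D}$ from below. The fan-growth statements (Propositions~\ref{cor:polynomial growth of rims}, \ref{lemma:ascending fans are polynomial}, \ref{prop:exponential}) measure vertical distortion of $\ker(h)$ inside $S_d$; they do not by themselves bound horizontal area in $\zl$. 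Moreover $\zl$ is a triangulated $2$-complex rather than a cube complex, so the hyperplane/corridor machinery you invoke does not apply to $D$ without a substantial reinterpretation that you have not supplied.

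The paper's argument bypasses all of this. Since the ascending and descending links are contractible (by Lemma~\ref{same topology of links} they are homotopy equivalent to $K_\Delta$, which is a disk), Morse theory makes the level set $\zl$ contractible; and since $\widetilde X_\Delta(d)$ is $3$-dimensional, $\zl$ is $2$-dimensional. In a contractible $2$-complex the cellular boundary map $C_2\to Z_1$ is injective, so any two disk fillings of $\ell_n$ represent the same $2$-chain. Because $R_n$ is \emph{embedded} (Proposition~\ref{embedded Rn}), every coefficient of its $2$-chain is $\pm 1$, and hence $\area{R_n}$ is a lower bound for the area of any competing filling. That is the whole lower-bound argument: embeddedness in a contractible $2$-dimensional level set, with no corridor analysis needed.
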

\begin{proof} Since the level sets of $\widetilde{X}_\Delta(d)$ are contractible (descending and ascending links are contractible) and $2$-dimensional, proposition \ref{embedded Rn} implies a lower a bound of $n^{d+3}$ for the Dehn function of the kernel $\ker_\Delta(d)$ when $d$ is finite.
\end{proof}

\section{The Main Example}
Let $C$ be the boundary $4$-cycle of the $1$-orthoplex $\Delta$. Let $\Sigma$ be the double of the $1$-orthoplex $\Delta$ along $C$: we set $\Sigma=\Delta\cup_\lambda\Delta'$ where $\Delta$ is as before, $\Delta'$ is a copy of $\Delta$ with a vertex $v'$ and edge $f'$ for every $v\in V\Delta$ and $f\in E\Delta$, and $\lambda$ identifies $C$ with $C'$ according to the rule $\lambda:\sigma\mapsto\sigma'$ for every $\sigma\in C$.

Let $A_\Sigma(d)$ denote the PRAAG obtained by marking both the interior edge $e$ of $\Delta$ and its double $e'$ with $1<d\in\NNhat$ while leaving all other edges unmarked (marked by $0$).

Since $C$ is a full sub-complex of $\Delta$, we may apply the gluing lemma \ref{gluing lemma} to conclude that $A_\Sigma(d)=A_\Delta(d)\join_{A_C}A_\Delta(d)$ with $A_\Delta(d)$ embedding in $A_\Sigma(d)$ while retaining the same height function. In particular we have an inclusion of kernels $\ker_\Delta(d)<\ker_\Sigma(d)$.

\begin{theorem}\label{Dehn function of the double} The group $K=\ker_\Sigma(d)$ is of type $F_2$ but not $F_3$, and its Dehn function is given by: \[\delta_K(n)\asymp\left\{\begin{array}{rl} n^{d+3}, & d<\infty\\ e^n\,, & d=\infty\,.\end{array}\right.\]
\end{theorem}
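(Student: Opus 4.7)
The theorem splits into a finiteness claim and a Dehn-function sandwich, which I would address in that order using the tools of Sections~3--5.

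\textbf{Finiteness.} The flag complex $K_\Delta$ of the 1-orthoplex is a topological disk bounded by $C$ (two 2-simplices meeting along the interior edge $e$), so the double $K_\Sigma = K_\Delta \cup_C K_{\Delta'}$ is homotopy equivalent to $S^2$. By Lemma \ref{same topology of links}, the ascending and descending links of $\widetilde h: \widetilde X_\Sigma(d) \to \RR$ are homotopy equivalent to $K_\Sigma\simeq S^2$. Bestvina--Brady Morse theory applied to the level sets then gives $K$ of type $F_2$ (simply connected links, cf.\ Corollary \ref{kernels finitely presented}) but not of type $F_3$ (since $\pi_2(S^2)\ne 0$ the links fail to be 2-connected).

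\textbf{Dehn function.} The upper bound is immediate from Theorem \ref{thm:upper bound}, using that $K_\Sigma$ is simply connected and the admissible marking on $\Sigma$ has $d_{\max}=d$. For the matching lower bound, I would use the decomposition $X_\Sigma(d) = X_\Delta(d) \cup_{X_C} X_{\Delta'}(d)$ together with van Kampen to write $A_\Sigma(d) \cong A_\Delta(d) \ast_{A_C} A_{\Delta'}(d)$, and then construct a group retraction $\phi: A_\Sigma(d) \to A_\Delta(d)$ given by the identity on the first factor and by the canonical doubling isomorphism $\Delta' \to \Delta$ on the second (extended to the perturbing generators via $s_{e'} \mapsto s_e$ and $a_{e',i} \mapsto a_{e,i}$). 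The two restrictions agree on the amalgamated subgroup $A_C$, so $\phi$ is well-defined. Since it sends every generator to a generator, $\phi$ commutes with the height homomorphisms and restricts to a retraction $\ker_\Sigma(d)\twoheadrightarrow \ker_\Delta(d)$. As Dehn functions weakly decrease under retraction, this gives $\delta_{\ker_\Delta(d)}(n)\preceq \delta_K(n)$, and Corollary \ref{Dehn fncn of the orhtoplex} supplies the matching lower bound $\delta_K(n)\succeq n^{d+3}$ (resp.\ $e^n$).

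\textbf{Main obstacle.} The essential technical check is the well-definedness of the retraction $\phi$: one must unpack the explicit PRAAG presentation of $A_\Sigma(d)$ from Section~3 to verify that the canonical labeling isomorphism between the two perturbing copies of $S_d$ attached at $e$ and $e'$ respects the LOG structure, and that it is compatible with the amalgamation along $A_C$ (which becomes trivial once one notes that $A_C$ contains no perturbing generators). A secondary but equally standard point is the ``not $F_3$'' direction, which relies on the Morse-theoretic converse to Corollary \ref{kernels finitely presented} (following the Bestvina--Brady template) together with Lemma \ref{same topology of links}.
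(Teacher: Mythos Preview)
Your proposal is correct and follows essentially the same route as the paper: finiteness via Lemma~\ref{same topology of links} and Bestvina--Brady Morse theory (the paper cites \cite{[Brady-Blue-Book]}, Theorem~2.4.2), the upper bound via Theorem~\ref{thm:upper bound}, and the lower bound via a retraction $A_\Sigma(d)\to A_\Delta(d)$ folding $\Delta'$ onto $\Delta$, which commutes with heights and restricts to the kernels so that Corollary~\ref{Dehn fncn of the orhtoplex} applies. The paper phrases the retraction topologically (``crushing the double in the obvious way'') whereas you write it group-theoretically on generators via the amalgam, but this is the same map.
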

\begin{proof} The homotopy type of the ascending/descending link is that of the $2$-sphere, by lemma \ref{same topology of links}. By theorem 2.4.2 of \cite{[Brady-Blue-Book]}, $\ker_\Sigma(d)$ has $F_2$ but not $F_3$.

By theorem \ref{thm:upper bound}, $\ker_\Sigma(d)$ has a polynomial isoperimetric inequality of degree $d+3$ when $d$ is finite, and exponential isoperimetric inequality when $d=\infty$.

To obtain the lower bound, we consider the retraction $\rho:A_\Sigma(d)\to A_\Delta(d)$ induced from retracting $\Sigma$ onto $\Delta$ by crushing the double in the obvious way.

Since $\rho$ commutes with the height functions, restricting $\rho$ to $\ker_\Sigma(d)$ produces a retraction $\bar\rho:\ker_\Sigma(d)\to \ker_\Delta(d)$. The existence of $\bar\rho$ implies $\delta_{\ker_\Delta(d)}\preceq\delta_{\ker_\Sigma(d)}$, which completes the proof after applying corollary \ref{Dehn fncn of the orhtoplex}
\end{proof}

\bibliographystyle{siam}
\bibliography{findehnref}

\end{document}